 \tikzset{>=stealth',
        cvertex/.style={circle,draw=black,inner sep=1pt,outer sep=3pt},
        vertex/.style={circle,fill=black,inner sep=1pt,outer sep=3pt},
        star/.style={circle,fill=yellow,inner sep=0.75pt,outer sep=0.75pt},
        tvertex/.style={inner sep=1pt,font=\scriptsize},
        gap/.style={inner sep=0.5pt,fill=white}}
 \numberwithin{equation}{section}
\newtheorem{theorem}{Theorem}[section]
\newtheorem{lemma}[theorem]{Lemma}
\newtheorem{proposition}[theorem]{Proposition}
\newtheorem{question}[theorem]{Question}
\theoremstyle{definition}
\newtheorem{definition}[theorem]{Definition}
\theoremstyle{remark}
\newtheorem{remark}[theorem]{Remark}
\newtheorem{remarks}[theorem]{Remarks}
\newtheorem{example}[theorem]{Example}
\newcommand{\one}{\ensuremath{(\mathrm{i})}}
\newcommand{\two}{\ensuremath{(\mathrm{ii})}}
\newcommand{\three}{\ensuremath{(\mathrm{iii})}}
\newcommand{\CC}{\ensuremath{\mathbb{C}}} 
\newcommand{\kk}{\ensuremath{\Bbbk}} 
\newcommand{\NN}{\ensuremath{\mathbb{N}}} 
\newcommand{\QQ}{\ensuremath{\mathbb{Q}}}
\newcommand{\ZZ}{\ensuremath{\mathbb{Z}}}
\newcommand{\Amp}{\operatorname{Amp}}
\newcommand{\End}{\operatorname{End}}
\newcommand{\git}{\ensuremath{\operatorname{\!/\!\!/\!}}}
\newcommand{\GL}{\operatorname{GL}}
\newcommand{\head}{\operatorname{h}}
\newcommand{\Hilb}{\operatorname{Hilb}}
\newcommand{\Hom}{\operatorname{Hom}}
\newcommand{\Irr}{\operatorname{Irr}}
\newcommand{\Mov}{\operatorname{Mov}}
\newcommand{\Nef}{\operatorname{Nef}}
\newcommand{\Pic}{\operatorname{Pic}}
\newcommand{\Proj}{\operatorname{Proj}}
\newcommand{\Quot}{\operatorname{Quot}}
\newcommand{\rank}{\operatorname{rk}}
\newcommand{\red}{\operatorname{red}}
\newcommand{\Rep}{\operatorname{Rep}}
\newcommand{\SL}{\operatorname{SL}}
\newcommand{\Sp}{\operatorname{Sp}}
\newcommand{\Spec}{\operatorname{Spec}}
\newcommand{\Sym}{\operatorname{Sym}}
\newcommand{\tail}{\operatorname{t}}
\title[Hilbert schemes of ADE singularities]{An introduction to Hilbert schemes of points \\ on ADE singularities}
\author{Alastair Craw} 
\address{Department of Mathematical Sciences, 
University of Bath, 
Claverton Down, 
Bath BA2 7AY, 
UK.}
\email{a.craw@bath.ac.uk}
\urladdr{http://people.bath.ac.uk/ac886/}
\subjclass[2020]{Primary 16G20; Secondary 13A50, 14C05, 14E16, 14E30}
\begin{document}

\begin{abstract}
This paper is based on a talk at the conference \emph{The McKay correspondence, mutation and related topics} from July 2020. We provide an introduction to joint work of the author with S{\o}ren Gammelgaard, \'{A}d\'{a}m Gyenge and Bal\'{a}zs Szendr\H{o}i~\cite{CGGS19} that constructs the reduced scheme underlying the Hilbert scheme of $n$ points on an ADE singularity as a Nakajima quiver variety for a particular stability parameter. After drawing a parallel with two well-known constructions of the Hilbert scheme of $n$ points in $\mathbb{A}^2$, we summarise results of the author and Gwyn Bellamy~\cite{BC20} before describing the main result by cornering a noncommutative algebra obtained from the preprojective algebra of the framed McKay graph. 
\end{abstract}

\maketitle
\tableofcontents

\section{Introduction}
The study of moduli spaces is of fundamental importance in algebraic geometry and geometric representation theory. 
One much-studied moduli space is the Hilbert scheme of $n$ points in the affine plane for $n\geq 1$. 
Many geometric properties of $\Hilb^{[n]}(\mathbb{A}^2)$ 
can be deduced from a beautiful result of Fogarty~\cite{Fogarty68} describing 
the Hilbert scheme of $n$ points on any smooth surface as a projective crepant resolution of the $n$th symmetric product of the surface; in particular, $\Hilb^{[n]}(\mathbb{A}^2)$ is smooth, it's projective over the affine variety $\Sym^n(\mathbb{A}^2)$, and its canonical bundle is trivial. In addition, $\Hilb^{[n]}(\mathbb{A}^2)$ enjoys the distinction of being a Nakajima quiver variety and, as such, it is a holomorphic symplectic variety that can be obtained as a Geometric Invariant Theory (GIT) quotient. Our aim here is to describe the extent to which these statements about $\Hilb^{[n]}(\mathbb{A}^2)$ generalise to the Hilbert scheme of $n$ points on any ADE singularity.

 By an ADE singularity, we mean any singular surface of the form $\mathbb{A}^2/\Gamma$ for some finite subgroup $\Gamma\subset \SL(2,\kk)$, where $\kk$ is an algebraically closed field of characteristic zero. These surfaces (also known as Kleinian, Du Val, or simple surface singularities) are well known to be hypersurfaces in $\mathbb{A}^3$ with an isolated singularity at the origin, and their unique minimal resolution $S\to \mathbb{A}^2/\Gamma$ contracts a tree of rational curves in a configuration encoded by an ADE Dynkin diagram. Moreover, the many guises of the McKay correspondence~\cite{McKay80, GSV83,Reid02} describe the geometry of $S$ in terms of the $\Gamma$-equivariant geometry of $\mathbb{A}^2$ or, equivalently, in terms of the representation theory of $\Gamma$. In fact, so much is known about ADE singularities that it came as a surprise (at least to the author) that very little was known about $\Hilb^{[n]}(\mathbb{A}^2/\Gamma)$ until fairly recently. 
 
 It turns out that $\Hilb^{[n]}(\mathbb{A}^2/\Gamma)$ shares 
 some key geometric properties with the Hilbert scheme of $n$ points on $\mathbb{A}^2$, at least if we give it 
 the underlying reduced scheme structure.
 
 
 \begin{theorem}
 \label{thm:mainintro}
  Let $\Gamma\subset \SL(2,\kk)$ be a finite subgroup and let $n\geq 1$. Then  $\Hilb^{[n]}(\mathbb{A}^2/\Gamma)_{\red}\!:$
  \begin{enumerate}
      \item[\one] is a Nakajima quiver variety for some non-generic GIT stability parameter;
      \item[\two] is a partial crepant resolution of the affine variety $\Sym^n(\mathbb{A}^2/\Gamma);$ and
      \item[\three] admits a unique projective crepant resolution obtained by variation of GIT quotient.
  \end{enumerate}
\end{theorem}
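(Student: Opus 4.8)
The plan is to realise $\Hilb^{[n]}(\mathbb{A}^2/\Gamma)$ as a moduli space of modules over a noncommutative algebra, and then to recognise that algebra as a corner of the framed preprojective algebra, at which point the Nakajima quiver variety appears on the nose. Write $A=\kk[x,y]\rtimes\Gamma$ for the skew group algebra; the McKay correspondence identifies $A$, up to Morita equivalence, with the preprojective algebra $\Pi$ of the affine ADE quiver, and the coordinate ring of the singularity is the corner $R=\kk[x,y]^\Gamma=e_0Ae_0$ at the idempotent $e_0$ attached to the trivial representation. The Hilbert scheme parametrises colength-$n$ quotients $R\twoheadrightarrow M$, equivalently cyclic $R$-modules of length $n$. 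The first step is therefore to pass to the framed picture: adjoin a framing vertex $\infty$ to the McKay quiver, form its preprojective algebra $\widetilde{\Pi}$, and let $B=e\widetilde{\Pi}e$ be the corner at $e=e_\infty+e_0$. I would then show that stable $B$-modules of the relevant dimension vector carry the same data as cyclic length-$n$ $R$-modules, so that the corresponding GIT quotient---which is by construction the Nakajima quiver variety $\mathfrak{M}_{\theta_0}(n\delta,e_0)$ for a suitable stability parameter $\theta_0$, with $\delta$ the minimal imaginary root and framing $e_0$---carries a universal family inducing a morphism to $\Hilb^{[n]}(\mathbb{A}^2/\Gamma)$.

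For part~\one, having built the comparison morphism, I would prove it is an isomorphism onto the reduced Hilbert scheme. Bijectivity on closed points is the cornering dictionary above, so the content lies in the scheme structure. Since $\mathfrak{M}_{\theta_0}(n\delta,e_0)$ is reduced, being a GIT quotient of the representation variety of the preprojective algebra, whereas $\Hilb^{[n]}(\mathbb{A}^2/\Gamma)$ need not be, the map can only be an isomorphism after passing to $(-)_{\red}$; one checks that the universal family descends and that the induced map is a bijective closed immersion of reduced schemes, hence an isomorphism. Finally $\theta_0$ is non-generic: for $\Gamma$ nontrivial the reduced Hilbert scheme is singular, whereas a generic stability parameter yields a smooth quiver variety, so $\theta_0$ must lie on a GIT wall. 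Concretely, the cornering forces $\theta_0$ to be supported on the affine vertex and the framing, hence to lie on the hyperplanes cut out by the remaining ADE vertices of the McKay graph.

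Part~\two is the GIT incarnation of the Hilbert--Chow morphism. The affinisation $\pi\colon\mathfrak{M}_{\theta_0}(n\delta,e_0)\to\mathfrak{M}_0(n\delta,e_0)$ is projective by construction, and the affine quotient $\mathfrak{M}_0(n\delta,e_0)$ is identified with $\Sym^n(\mathbb{A}^2/\Gamma)$; the map is birational because it restricts to an isomorphism over the locus of $n$ distinct free $\Gamma$-orbits. Crepancy is where the symplectic geometry enters: these quiver varieties carry a holomorphic symplectic form on their smooth locus, $\mathfrak{M}_0$ has symplectic singularities, and $\pi$ is Poisson, so the symplectic forms match on the common smooth open set; as both sides are normal and Gorenstein with trivial canonical class this forces $K_{\mathfrak{M}_{\theta_0}}=\pi^*K_{\Sym^n(\mathbb{A}^2/\Gamma)}$. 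Together with normality of $\mathfrak{M}_{\theta_0}$ this shows $\pi$ is a partial crepant resolution.

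For part~\three, I would run variation of GIT. Choosing a generic parameter $\theta_+$ in a chamber $C$ whose closure contains $\theta_0$, the variety $\mathfrak{M}_{\theta_+}(n\delta,e_0)$ is smooth, the VGIT morphism $\mathfrak{M}_{\theta_+}\to\mathfrak{M}_{\theta_0}$ is projective and birational, and by the symplectic argument above it is crepant---hence a projective crepant resolution of $\Hilb^{[n]}(\mathbb{A}^2/\Gamma)_{\red}$. Uniqueness is the crux, and here I would invoke the description of the movable cone and its wall-and-chamber decomposition for these quiver varieties from~\cite{BC20}: projective crepant resolutions factoring the Hilbert--Chow map correspond to the chambers $C$ with $\theta_0\in\overline{C}$, and one must show there is exactly one such chamber, or equivalently that the wall through $\theta_0$ is a facet of a single chamber on the relevant side of the movable cone, so that no flop across it produces a second resolution of $\mathfrak{M}_{\theta_0}$. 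I expect this uniqueness, together with the reduced-structure comparison in part~\one, to be the main obstacles; the remaining projectivity, birationality and crepancy statements are formal consequences of the GIT set-up and the symplectic structure.
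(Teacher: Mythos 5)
Your overall architecture --- corner the framed preprojective algebra at $e_\infty+e_0$ to recover the algebra presenting $\Hilb^{[n]}(\mathbb{A}^2/\Gamma)$, then use VGIT and the movable-cone description of \cite{BC20} for parts \two\ and \three\ --- matches the paper, and your treatment of \two\ and of uniqueness in \three\ (via the fact that $\theta_0$ lies in the closure of exactly one GIT chamber of $F$) is the intended argument. But there are two genuine gaps in part \one. First, the moduli space of stable modules over the cornered algebra $B=e\widetilde{\Pi}e$ of dimension vector $(1,n)$ is \emph{not} ``by construction'' the Nakajima quiver variety: $\mathfrak{M}_{\theta_0}(\mathbf{v},\mathbf{w})$ is a GIT quotient of representations of the \emph{full} framed quiver with dimension vector $(1,n\delta)$, carrying data at every vertex $\rho_1,\dots,\rho_r$, and identifying these two a priori different quotients is the content of the theorem, not its starting point. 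Relatedly, $\theta_0$ is non-generic, so $\mathfrak{M}_{\theta_0}(\mathbf{v},\mathbf{w})$ parametrises S-equivalence classes and carries no universal family; your plan to ``check that the universal family descends'' cannot be carried out directly. The paper's workaround is the key geometric idea: construct two morphisms out of the resolution $\mathfrak{M}_{\theta}(\mathbf{v},\mathbf{w})$ for generic $\theta\in C_+$ --- the VGIT contraction $\varphi_{\vert L\vert}$ and the universal morphism $\tau$ to the Hilbert scheme given by the flat family $\mathcal{O}_{\mathfrak{M}}\oplus\mathcal{R}_0$ of $A_0$-modules --- and observe that both pull the respective polarising ample bundles back to the same line bundle $L=\det(\mathcal{R}_0)$; surjectivity of $\varphi_{\vert L\vert}$ then forces $\tau$ to factor through a closed immersion $\iota\colon\mathfrak{M}_{\theta_0}(\mathbf{v},\mathbf{w})\hookrightarrow\Hilb^{[n]}(\mathbb{A}^2/\Gamma)$.

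Second, surjectivity of $\iota$ on closed points is not simply ``the cornering dictionary'': cornering $M\mapsto eM$ is not an equivalence, and its adjoint $j_{!}(N)=A(e_\infty+e_0)\otimes_{A_0}N$ produces a $\theta_0$-semistable $A$-module whose dimensions at the vertices $\rho_1,\dots,\rho_r$ are not obviously bounded by $n\dim(\rho_k)$. The paper states explicitly that the naive inequalities $\dim_k j_{!}(N)\leq n\dim(\rho_k)$ are not known in general; one must pass to the $\theta_0$-polystable representative of the S-equivalence class of $j_{!}(N)$ and establish the bounds there, via the case-by-case analysis in the appendix of \cite{CGGS19}. Without this step you do not know that every colength-$n$ ideal of $\kk[x,y]^\Gamma$ is hit by a representation of dimension vector exactly $(1,n\delta)$, so bijectivity on closed points --- the heart of part \one\ --- remains unproved in your outline.
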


 It follows, for example, that $\Hilb^{[n]}(\mathbb{A}^2/\Gamma)$ is irreducible (a result due to Zheng~\cite{Zheng17}), and its underlying reduced scheme is normal and has symplectic singularities. This implies in particular that its singularities are both rational and Gorenstein. The fact that $\Hilb^{[n]}(\mathbb{A}^2/\Gamma)$ admits a unique crepant resolution was known in the special case $n=2$ by the work of Yamagishi~\cite{Yamagishi17}.

 Our goal in writing this expository paper is to motivate, to describe, and then to sketch the proof of Theorem~\ref{thm:mainintro} that forms the main result of the author's work with Gammelgaard, Gyenge and Szendr\H{o}i~\cite{CGGS19} and which builds on the author's earlier work with Bellamy~\cite{BC20}. Following the 
 remit from the editors of this conference proceedings, we have placed significant emphasis while writing this paper on the motivation of this result and, for that reason, much of our initial effort focuses on the analogous statements for the Hilbert scheme of points on $\mathbb{A}^2$. This material is classical, and there are a number of excellent summaries in the literature that delve much deeper than we do here, including those by Nakajima~\cite{Nakajimabook}, Bertin~\cite{Bertin12} and
Ginzburg~\cite{Ginzburg12}, as well as the unpublished notes by Bolognese--Losev~\cite{BL14}. Nevertheless, we have chosen to review carefully these results for $\Hilb^{[n]}(\mathbb{A}^2)$ because it provides the opportunity to introduce our point-of-view that makes the statements for $\Hilb^{[n]}(\mathbb{A}^2/\Gamma)$ listed in Theorem~\ref{thm:mainintro} seem especially natural.

 
 \smallskip
 
 It is worth explaining briefly what our point-of-view is here.
 There is a well known quiver GIT description of $\Hilb^{[n]}(\mathbb{A}^2)$ that on the face of it has nothing to do with Nakajima quiver varieties, simply because the relevant quiver (see Figure~\ref{fig:quiverQ'}) has only three arrows and therefore cannot be the doubled quiver of any graph. The key insight, going back to Nakajima~\cite[Proposition~2.7]{Nakajimabook}, is that if one augments the quiver by introducing an extra arrow and uses the preprojective relation on the resulting quiver, then the original GIT description can be reinterpreted as a Nakajima quiver variety for a particular choice of GIT stability condition. 
 This enables one to regard some classical geometric results for $\Hilb^{[n]}(\mathbb{A}^2)$ as corollaries of the Nakajima quiver variety construction. For example, Fogarty's description~\cite{Fogarty68} of the Hilbert--Chow morphism
 \[
 \Hilb^{[n]}\big(\mathbb{A}^2\big)\longrightarrow \Sym^n\big(\mathbb{A}^2\big)
 \]
 as a projective, crepant resolution of singularities can be obtained directly as a statement about variation of GIT quotient for Nakajima quiver varieties (see Theorem~\ref{thm:Fogarty}).
 
 An analogous dual GIT interpretation of the Hilbert scheme of points on $\mathbb{A}^2/\Gamma$ lies at the heart of Theorem~\ref{thm:mainintro}. Just as for $\Hilb^{[n]}(\mathbb{A}^2)$, it is relatively straightforward to introduce a natural quiver GIT construction of $\Hilb^{[n]}(\mathbb{A}^2/\Gamma)$ in terms of a simple quiver (see Figure~\ref{fig:quiverQ3loops}) and an ideal of relations involving the defining hypersurface equation $(f=0)\subset \mathbb{A}^3$ of the ADE singularity $\mathbb{A}^2/\Gamma$. 
 However, 
 the second GIT construction in this situation requires more than a simple reinterpretation.  For this, we study Nakajima quiver varieties associated to the affine Dynkin graph of type ADE  associated to $\Gamma$ by McKay~\cite{McKay80}. Fix dimension vectors $\mathbf{v}= n\delta$ and $\mathbf{w}=\rho_0$, where $\delta$ (resp.\ $\rho_0$) corresponds to the regular (resp.\ trivial) representation of $\Gamma$. The affine Nakajima quiver variety $\mathfrak{M}_0(\mathbf{v},\mathbf{w})$ is isomorphic to $\Sym^n(\mathbb{A}^2/\Gamma)$, while the Nakajima quiver variety $\mathfrak{M}_\theta(\mathbf{v},\mathbf{w})$ determined by any stability condition $\theta$ in a particular open GIT chamber $C_+$ is isomorphic as a variety to
 \[
 n\Gamma\text{-}\Hilb(\mathbb{A}^2) = \big\{ \Gamma\text{-invariant ideals }I\subset \kk[x,y] \mid \kk[x,y]/I\cong_\Gamma \kk[\Gamma]^{\oplus n}\big\}
 \]
 by \cite{VV99, Wang02}. Our approach to Theorem~\ref{thm:mainintro} is to vary the stability parameter in two stages: first, from the GIT chamber $C_+$ to a specific parameter $\theta \rightsquigarrow\theta_0$ in the boundary of $C_+$; and then to the origin $\theta_0\rightsquigarrow 0$, thereby inducing morphisms 
\begin{equation}
    \label{eqn:morphsims}
\begin{tikzcd}
n\Gamma\text{-}\Hilb(\mathbb{A}^2)\cong \mathfrak{M}_{\theta}(\mathbf{v},\mathbf{w}) \ar[r,"\ref{thm:mainintro}\three"]  &  
\mathfrak{M}_{\theta_0}(\mathbf{v},\mathbf{w})\ar[r,"\ref{thm:mainintro}\two"]  &  \mathfrak{M}_{0}(\mathbf{v},\mathbf{w})\cong \Sym^n(\mathbb{A}^2/\Gamma);
 \end{tikzcd}
\end{equation}
 here, the right-hand morphism will provide the partial crepant resolution from the statement of Theorem~\ref{thm:mainintro}\two, while the left-hand morphism will provide the projective, symplectic resolution appearing in Theorem~\ref{thm:mainintro}\three. All that remains, then, is to establish the isomorphism
\begin{equation}
\label{eqn:mainone}
\mathfrak{M}_{\theta_0}(\mathbf{v},\mathbf{w})\cong \Hilb^{[n]}(\mathbb{A}^2/\Gamma)_{\textrm{red}}
\end{equation}
required for Theorem~\ref{thm:mainintro}\one. 

Constructing this isomorphism forms the heart of the argument in \cite{CGGS19}. The first step is to show 
that a specific globally generated line bundle $L$ on $\mathfrak{M}_{\theta}(\mathbf{v},\mathbf{w})$ can be obtained in two ways:
\begin{enumerate}
    \item first, as the pullback of the polarising ample line bundle on $\mathfrak{M}_{\theta_0}(\mathbf{v},\mathbf{w})$ via the morphism $\mathfrak{M}_{\theta}(\mathbf{v},\mathbf{w}) \to \mathfrak{M}_{\theta_0}(\mathbf{v},\mathbf{w})$ induced by variation of GIT quotient $\theta\rightsquigarrow \theta_0$; and also
    \item as the pullback via the universal morphism $\mathfrak{M}_{\theta}(\mathbf{v},\mathbf{w}) \to \Hilb^{[n]}(\mathbb{A}^2/\Gamma)$ of the polarising ample bundle obtained from the first GIT construction of $\Hilb^{[n]}(\mathbb{A}^2/\Gamma)$  described above.
\end{enumerate}
 To obtain the universal morphism, we construct an algebra $A_0$ from the preprojective algebra $\Pi$ of the framed McKay quiver by deleting one arrow and then \emph{cornering} away all bar two of the vertices. The fact that the same line bundle pulls back via both morphisms, the first of which is surjective, allows us to construct a closed immersion $\mathfrak{M}_{\theta_0}(\mathbf{v},\mathbf{w})\hookrightarrow \Hilb^{[n]}(\mathbb{A}^2/\Gamma)$. The second step of the proof is more algebraic: we study a functor arising in a recollement of a module category in order to deduce that the above closed immersion induces the isomorphism \eqref{eqn:mainone}. We conclude by describing the generalisation that constructs orbifold Quot schemes as quiver varieties~\cite{CGGS21}.

\subsection*{Acknowledgements}
Thanks to S\o ren, \'{A}d\'{a}m and Bal\'{a}zs for their efforts and inspiration on our joint project \cite{CGGS19,CGGS21}. I owe a significant debt to Gwyn for reigniting my interest in Nakajima quiver varieties and from whom I have learned a great deal since we began work on \cite{BC20}. I'm grateful to Akira Ishii, Yukari Ito and Osamu Iyama for running the \emph{The McKay correspondence, mutation and related topics} conference online in July--August 2020; 
I owe special thanks to Akira for introducing me to the linearisation map almost two decades ago.
Thanks to Ben McKay for writing the excellent Dynkin diagrams package with which I drew Figure~\ref{fig:Dynkin}.
Finally, thanks to the anonymous referee for several very helpful comments and questions.



\section{How to calculate \texorpdfstring{$\Hilb^{[n]}(\mathbb{A}^2)$}{affine plane}}

\label{sec:HilbnA2} 
Throughout we let $\kk$ denote an algebraically closed field of characteristic zero. 

\subsection{As a set}
For $n\geq 1$, the \emph{Hilbert scheme of $n$ points in $\mathbb{A}^2$} parametrises subschemes of length $n$ in the affine plane, or equivalently, ideals in $\kk[x,y]$ for which the quotient has dimension $n$ over $\kk$:
\begin{eqnarray*}
\Hilb^{[n]}(\mathbb{A}^2) & := & \big\{Z\subset \mathbb{A}^2 \mid \dim_{\kk} H^0(\mathcal{O}_Z) = n\big\} \\
& = & \big\{ I\subset \kk[x,y] \mid \dim_{\kk} \kk[x,y]/I = n\big\}.
 \end{eqnarray*}
 For example, any collection of $n$ distinct closed points in $\mathbb{A}^2$ defines a point of $\Hilb^{[n]}(\mathbb{A}^2)$, as does the scheme  supported at the origin in $\mathbb{A}^2$ cut out by the ideal $\langle x^n,y\rangle$.
 
 \subsection{As a GIT quotient}
 If we fix a vector space $V_0$ of dimension $n$ with a pair of commuting endomorphisms $B_1, B_2\in \End(V_0)$, then the resulting $\kk[x,y]$-module $V_0$ is isomorphic to the space of sections $H^0(\mathcal{O}_Z)=\kk[x,y]/I$ for some subscheme $Z\subset \mathbb{A}^2$ of length $n$ if and only if there is a vector $i\in V_0$ that generates $V_0$ as a $\kk[x,y]$-module. 
 
 To see how this observation allows us to construct $\Hilb^{[n]}(\mathbb{A}^2)$ as a GIT quotient, choose a basis of $V_0$ and define 
 \begin{equation}
     \label{eqn:Z}
 Z:=\big\{(B_1, B_2, i)\in \End(\kk^n)^{\oplus 2} \oplus \kk^n \mid B_1B_2=B_2B_1
\big\} 
  \end{equation}
 The group $\GL(n,\kk)$ acts on $\kk^n$ by change of basis and the induced action on $Z$ satisfies   
 \[
 g\cdot (B_1,B_2,i) = \big(gB_1g^{-1},gB_2g^{-1},gi\big)
 \]
 for $g\in\GL(n,\kk)$. It turns out that one can choose a stability condition that characterises the set of $\GL(n,\kk)$-orbits of points $(B_1,B_2,i)\in Z$ for which the corresponding $\kk[x,y]$-module $V_0$ is generated by the vector $i\in V_0\cong \kk^n$. For this, consider a character $\chi\in\GL(n,\kk)^\vee$. We say that a polynomial function $f\in \kk[Z]$ is \emph{$\chi$-semi-invariant} if $f(g\cdot z) = \chi(g) f(z)$ for all $z\in Z$ and $g\in \GL(n,\kk)$, and we write $\kk[Z]_{\chi}$ for the vector space of all $\chi$-semi-invariant polynomial functions on $Z$. Create a graded ring by taking the direct sum of these graded pieces over all positive multiples of the character $\chi$. We're interested in the space obtained as
$\Proj$ of this graded ring, namely
 \begin{equation}
      \label{eqn:Proj}
 Z\git_{\chi} \GL(n,\kk):= 
\Proj \bigoplus_{k\geq 0} \kk[Z]_{k\chi}.
 \end{equation}
 This is the \emph{Geometric Invariant Theory (GIT) quotient} of $Z$ by $\GL(n,\kk)$, linearised by $\chi$.
  
 \begin{theorem}[\textbf{GIT construction of $\Hilb^{[n]}(\mathbb{A}^2)$}]
 \label{thm:HilbnGIT}
  Let $\chi\in \GL(n,\kk)^\vee$ be such that $\chi(g) = \det(g)$ for all $g\in \GL(n,\kk)$. Then $\Hilb^{[n]}(\mathbb{A}^2)$ can be constructed as the GIT quotient $Z\git_{\chi} \GL(n,\kk)$.
 \end{theorem}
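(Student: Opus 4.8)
The plan is to pin down the $\chi$-semistable locus explicitly with the Hilbert--Mumford numerical criterion (equivalently, King's criterion for the associated framed quiver), identify the GIT quotient with a geometric orbit space, and then promote the resulting bijection on points to an isomorphism of schemes by comparing universal families. Concretely, I would first test $\chi$-semistability of a point $z=(B_1,B_2,i)\in Z$ against one-parameter subgroups $\lambda\colon\kk^*\to\GL(n,\kk)$. Writing $\kk^n=\bigoplus_m V(m)$ for the decomposition into $\lambda$-weight spaces, the limit $\lim_{t\to 0}\lambda(t)\cdot z$ exists exactly when $B_1,B_2$ preserve the ascending filtration $F_{\geq k}=\bigoplus_{m\geq k}V(m)$ and $i\in F_{\geq 0}$, in which case the weight of $\chi=\det$ along $\lambda$ is $\sum_m m\dim V(m)$; recall that $z$ is semistable precisely when this weight is nonnegative for every such $\lambda$. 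From this I would read off that $z$ is semistable if and only if $i$ is a cyclic vector, meaning that the only $B_1,B_2$-invariant subspace containing $i$ is $\kk^n$ itself: a proper invariant subspace $W\ni i$ yields, by assigning weight $0$ to $W$ and $-1$ to a complement, a one-parameter subgroup with a limit and strictly negative weight, so $z$ is unstable; conversely cyclicity forces $F_{\geq 0}=\kk^n$, so no negative weights occur and the weight is nonnegative. The same computation gives a strictly positive weight for every nontrivial $\lambda$ admitting a limit, so semistable points are automatically stable and cyclic triples have trivial stabiliser.

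It follows that $Z^{\mathrm{ss}}(\chi)=Z^{\mathrm{s}}(\chi)$ is the cyclic locus $Z^{\mathrm{cyc}}$, that $\GL(n,\kk)$ acts freely on it, and that the GIT quotient is the geometric quotient $Z^{\mathrm{cyc}}/\GL(n,\kk)$. By the module-theoretic dictionary opening this section, a $\GL(n,\kk)$-orbit of a cyclic triple is the same as a surjection $\kk[x,y]\twoheadrightarrow\kk^n$ sending $1\mapsto i$ and $x,y$ to $B_1,B_2$, hence the same as an ideal $I\subset\kk[x,y]$ with $\dim_\kk\kk[x,y]/I=n$, so closed points of the quotient biject with those of $\Hilb^{[n]}(\mathbb{A}^2)$. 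To upgrade this to an isomorphism of schemes I would construct mutually inverse morphisms from universal families. On the one hand, the tautological triple over $Z^{\mathrm{cyc}}$ makes $\kk^n\otimes\mathcal{O}_{Z^{\mathrm{cyc}}}$ into a flat family of cyclic $\kk[x,y]$-modules, hence of length-$n$ subschemes of $\mathbb{A}^2$, and by the universal property of the Hilbert scheme together with $\GL(n,\kk)$-equivariance this descends to a morphism $Z^{\mathrm{cyc}}/\GL(n,\kk)\to\Hilb^{[n]}(\mathbb{A}^2)$. On the other hand, the pushforward of the structure sheaf of the universal length-$n$ subscheme is a rank-$n$ bundle carrying commuting multiplication-by-$x,y$ endomorphisms and a cyclic section; its frame bundle is a principal $\GL(n,\kk)$-bundle equipped with an equivariant map to $Z^{\mathrm{cyc}}$, which descends to the inverse morphism.

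The computational heart of the argument is the stability analysis, but I expect the genuine obstacle to be this last passage from a bijection on points to an isomorphism of schemes: a bijective morphism of varieties need not be an isomorphism, so one cannot avoid descending the tautological data along the free action, checking flatness of the tautological family, and verifying that the two morphisms are mutually inverse on the respective universal data. A cleaner-looking alternative is a deformation-theoretic comparison identifying the Zariski tangent space $\Hom_{\kk[x,y]}(I,\kk[x,y]/I)$ of $\Hilb^{[n]}(\mathbb{A}^2)$ at $I$ with the tangent space of the quotient and then invoking smoothness of both sides, but either route demands the same care in handling the quotient by $\GL(n,\kk)$.
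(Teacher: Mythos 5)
Your proposal is correct, and its computational heart coincides with the paper's: both arguments reduce the theorem to showing that $\chi$-(semi)stability of $(B_1,B_2,i)$ is equivalent to $i$ generating $\kk^n$ as a $\kk[x,y]$-module. The difference is in how that equivalence is established and in how much of the moduli-theoretic bookkeeping is made explicit. The paper cites King's theorem to translate $\chi_\theta$-stability into the subrepresentation inequality $\theta(V')>0$ for the quiver $\mathcal{Q}$ of Figure~\ref{fig:quiverQ'}, and then observes that for $\theta=1$ the only constraint comes from subrepresentations with $V'_\infty\neq 0$, which forces $V'=V$ exactly when $i$ is cyclic. You instead unfold King's theorem into the underlying Hilbert--Mumford computation: weight decompositions under a one-parameter subgroup, the filtration condition for the limit to exist, and the identification of destabilising $\lambda$'s with proper invariant subspaces containing $i$ (your filtration $F_{\geq k}$ is descending in $k$, not ascending, but the computation is right). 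This buys self-containedness at the cost of redoing what \cite[Theorem~2.2]{King94} packages. Where your write-up genuinely goes beyond the paper is the final passage from a bijection on closed points to an isomorphism of schemes: the paper stops at ``this concludes the construction,'' implicitly treating the quiver GIT quotient as the definition of $\Hilb^{[n]}(\mathbb{A}^2)$ via its fine moduli interpretation (Remark~\ref{rem:fine1}), whereas you descend the tautological family along the free $\GL(n,\kk)$-action and construct the inverse from the pushforward of the universal subscheme. You are right that this is where the real care is needed if one insists on comparing with the classical Hilbert functor rather than adopting the moduli description as the construction; both routes are legitimate, and yours is the more complete one for a reader who wants the scheme-theoretic identification spelled out.
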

 
 \begin{remark}
 \label{rem:stability}
  To explain the notation, note that the irrelevant ideal for the $\Proj$ construction from \eqref{eqn:Proj} is $\bigoplus_{k> 0} \kk[Z]_{k\chi}$. Removing from $Z$ the locus where all functions in this ideal vanish leaves the open set of \emph{$\chi$-semistable points}; explicitly, $z\in Z$ is \emph{$\chi$-semistable} if there exists $k> 0$ and $f\in \kk[Z]_{k\chi}$ such that $f(z)\neq 0$. If, in addition, the stabiliser $\GL(n,\kk)_z$ is finite and all $\GL(n,\kk)$-orbits are closed in $Z\setminus (f=0)$, then $z$ is said to be \emph{$\chi$-stable}. We'll see in the proof of Theorem~\ref{thm:HilbnGIT} that for the character $\chi$ from Theorem~\ref{thm:HilbnGIT} (indeed, for any nonzero character), every $\chi$-semistable point of $Z$ is $\chi$-stable, in which case it follows that  $Z\git_{\chi} \GL(n,\kk)$
 parametrises the $\GL(n,\kk)$-orbits of all $\chi$-stable points in $Z$. In this sense, then, the GIT quotient $Z\git_{\chi} \GL(n,\kk)$ is indeed a space of $\GL(n,\kk)$-orbits.
  \end{remark}
 
  \subsection{Proof via quiver GIT}
  To prove Theorem~\ref{thm:HilbnGIT}, it remains to show that $(B_1, B_2,i)\in Z$ is $\chi$-stable if and only if the vector $i\in \kk^n$ generates the corresponding
  $\kk[x,y]$-module $V_0\cong \kk^n$. 
  
  For this we recall a notion of stability phrased directly in terms of the $\kk[x,y]$-module $V_0$. Consider the quiver $\mathcal{Q}$ with vertex set $\mathcal{Q}_0=\{\infty,0\}$ and arrow set $\mathcal{Q}_1=\{a_1, a_2, b\}$, where $a_1, a_2$ are loops at vertex $0$ and where the tail and head of the arrow $b$ lie at $\infty$ and $0$ respectively as shown in Figure~\ref{fig:quiverQ'}.
 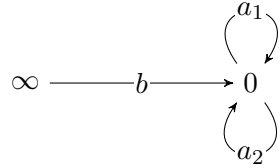
\begin{figure}[h!]
\centering
\begin{tikzpicture}[yscale=0.77]
\node (M0) at (-1,0) {$\infty$};
\node (M1) at (2,0) {$0$};
\draw [->] (M0) to node[gap] {$b$} (M1);
\draw [->,looseness=12, out=120, in=60] (M1) to node[gap] {$\small{a_1}$} (M1);
\draw [->,looseness=12, out=-60, in=-120] (M1) to node[gap] {$\small{a_2}$} (M1);
\end{tikzpicture}
\caption{The quiver $\mathcal{Q}$ with the relation $a_1a_2-a_2a_1$.}
\label{fig:quiverQ'}
\end{figure}
  By associating the vector spaces $V_\infty=\kk$ and $V_0=\kk^n$ to the vertices $\infty$ and $0$ respectively, and linear maps $i\in\Hom(\kk,\kk^n)$ and $B_1, B_2\in \End(\kk^n)$ to the arrows $b$ and $a_1, a_2$ respectively, we obtain a \emph{representation of the quiver} $\mathcal{Q}$ of dimension vector $(1,n)$. Imposing the \emph{relation} $a_1a_2-a_2a_1$ on the arrows ensures that the endomorphisms $B_1, B_2$ commute. It follows that representations satisfying this relation correspond precisely to $\kk[x,y]$-modules $V_0$ of dimension $n$ over $\kk$ together with the choice of a vector $i\in V_0$ defined to be the image of $1$ under the map $i\colon \kk\to \kk^n$. Put simply, every such representation is equivalent to the choice of a triple $(B_1, B_2, i)\in Z$. Moreover, isomorphism classes of representations correspond to $\GL(n,\kk)$-orbits of the triple; indeed, both notions simply encode a change of basis on $V_0$. We note in passing that the category of representations of $\mathcal{Q}$ satisfying the relation is equivalent to the category of left-modules over the noncommutative algebra \[
\Lambda:=\kk \mathcal{Q}/(a_1a_2-a_2a_1),
\]
where $\kk \mathcal{Q}$ denotes the path algebra of the quiver $\mathcal{Q}$.

To translate the GIT notion of stability from Remark~\ref{rem:stability} into one for representations of $\mathcal{Q}$, we fix an  isomorphism $\ZZ\cong \GL(n,\kk)^\vee$. There are two candidates for this isomorphism, and different authors make different choices; here, we choose the isomorphism sending $\theta\in \ZZ$ to the character
\begin{equation}
\label{eqn:character}
\chi_\theta\colon \GL(n,\kk)\to \mathbb{C}^\times ,\quad g\mapsto \det(g)^\theta.
\end{equation}
 Identify $\theta\in \ZZ$ with the map $\theta\colon \ZZ^2\to \ZZ$ given by  $\theta(v_\infty,v_0) = -n\theta v_\infty + \theta v_0$. For any subrepresentation  $V^\prime\subseteq V$ determined by subspaces $V^\prime_\infty\subseteq V_\infty=\kk$ and $V^\prime_0\subseteq V_0=\kk^n$ at the vertices of $\mathcal{Q}$, we write 
 \[
\theta(V^\prime):= \theta\big(\!\dim_\kk V^\prime_\infty,\dim_\kk V^\prime_0\big) = -n \theta\dim_\kk V^\prime_\infty + \theta\dim_\kk V^\prime_0\in \ZZ.
\]

  \begin{definition}[\textbf{King stability}]
  \label{def:stability}
  A representation $V$ of $\mathcal{Q}$ of dimension $(1,n)$ is \emph{$\theta$-semistable} (resp.\ \emph{$\theta$-stable}) if every subrepresentation $0\subsetneq V'\subsetneq V$ satisfies $\theta(V^\prime)\geq 0$ (resp.\ $\theta(V^\prime)>0$).
    \end{definition}
  
  We follow the sign convention of King~\cite{King94}, but note that some authors replace the exponent $\theta$ in \eqref{eqn:character} by $-\theta$ and the inequalities from Definition~\ref{def:stability} by $\theta(V^\prime)\leq 0$ (and $\theta(V^\prime) < 0$). With either convention, 
  \cite[Theorem~2.2]{King94} shows that $(i,B_1,B_2)\in Z$ is $\chi_\theta$-(semi)stable in the sense of  Remark~\ref{rem:stability} if and only if the corresponding representation $V=V_\infty\oplus V_0$ of $\mathcal{Q}$ is $\theta$-(semi)stable. 
 
 \begin{proof}[Proof of Theorem~\ref{thm:HilbnGIT}]
    For the character $\chi$ from Theorem~\ref{thm:HilbnGIT}, we show that a point $(B_1, B_2,i)\in Z$ is $\chi$-stable if and only if the corresponding $\kk[x,y]$-module $V_0$ is generated by the vector $i\in V_0$. King's result shows that $(B_1, B_2,i)\in Z$ is $\chi$-stable if and only if $V=V_\infty\oplus V_0$ is $\theta$-stable as a representation of $\mathcal{Q}$; here, the isomorphism \eqref{eqn:character} gives $\chi=\chi_\theta$ for $\theta=1$. For any subrepresentation $0\subsetneq V'\subsetneq V$, we have $ \theta(V^\prime)= -n\dim_\kk V^\prime_\infty + \dim_\kk V^\prime_0$. If $V_\infty^\prime=0$ then  $\theta(V^\prime)>0$. It follows that $V$ is $\theta$-stable (in fact, $\theta$-semistable) if and only if any such $V^\prime$ with $V_\infty^\prime\neq 0$ satisfies $V^\prime=V$ which, in turn, holds if and only if $V_0$ is generated as a $\kk[x,y]$-module by the vector $i$ in the image of the map $V_\infty\to V_0$. This concludes the construction of $\Hilb^{[n]}(\mathbb{A}^2)$ as the GIT quotient \eqref{eqn:Proj}.
  \end{proof}
 
 Theorem~\ref{thm:HilbnGIT} provides in particular a simple construction of $\Hilb^{[n]}(\mathbb{A}^2)$ as an algebraic scheme. 
 
 \begin{remark}
 \label{rem:fine1}
 For $\theta=1$, Theorem~\ref{thm:HilbnGIT} implies that $\Hilb^{[n]}(\mathbb{A}^2)$ may be regarded as the \emph{fine moduli space} of $\theta$-stable $\Lambda$-modules 
 of dimension $(1,n)$;  the phrase `$\infty$-generated' is often used in place of `$\theta$-stable' for this (positive) choice of $\theta$. This means that  $\Hilb^{[n]}(\mathbb{A}^2)$ carries:
 \begin{enumerate}
     \item[\one] a tautological vector bundle $\mathcal{O}_{\Hilb}\oplus \mathcal{T}$ whose fibre $\kk\oplus H^0(\mathcal{O}_Z)$ over any closed point $[Z]$ is the $\theta$-stable $\Lambda$-module $V$ appearing in the proof of Theorem~\ref{thm:HilbnGIT}; and
 \item[\two] a tautological $\kk$-algebra homomorphism from the algebra $\Lambda$ to $\End(\mathcal{O}_{\Hilb}\oplus \mathcal{T})$ that encodes the maps between summands of $\mathcal{O}_{\Hilb}\oplus \mathcal{T}$ determined by arrows in the quiver $\mathcal{Q}$.
 \end{enumerate}
 Moreover, $\Hilb^{[n]}(\mathbb{A}^2)$ is universal with this property, in the sense that any flat family of $\theta$-stable $\Lambda$-modules of dimension vector $(1,n)$ over a scheme $X$ is obtained as the pullback of $\mathcal{O}_{\Hilb}\oplus \mathcal{T}$ via a unique morphism $X\to \Hilb^{[n]}(\mathbb{A}^2)$.
 \end{remark}

 \subsection{As a Nakajima quiver variety}
  We now construct $\Hilb^{[n]}(\mathbb{A}^2)$ as a quiver variety in the sense of Nakajima~\cite{Nakajima94,Nakajima98}. This also provides the opportunity to recall a beautiful result of Fogarty~\cite{Fogarty68} that records several key geometric properties of $\Hilb^{[n]}(\mathbb{A}^2)$.
 
 First, define $\Sym^n(\mathbb{A}^2)$ to be the affine variety obtained as the quotient of $\mathbb{A}^{2n}=\prod_{i=1}^n \Spec \kk[x_i,y_i]$ by the action of the symmetric group $\mathfrak{S}_n$ that permutes the factors. This action preserves the symplectic form $\sum_{1\leq i\leq n} dx_i\wedge dy_i$ on $\mathbb{A}^{2n}$, so in fact $\mathfrak{S}_n$ is a subgroup of $\Sp(2n,\kk)$. This symplectic form descends to a non-degenerate closed 2-form $\omega_{\text{reg}}$ on the smooth locus of the affine variety
 \[
 \Sym^n(\mathbb{A}^2):= \mathbb{A}^{2n}_\kk/\mathfrak{S}_n = \Spec \big(\kk[x_1,y_1,\dots,x_{n},y_{n}]^{\mathfrak{S}_n}\big).
  \]
 It turns out that the pullback of $\omega_{\text{reg}}$ via any resolution of singularities $\varphi\colon X \to \Sym^n(\mathbb{A}^2)$ can be extended to a regular 2-form on the whole of $X$; following Beauville~\cite[Proposition~2.4]{Beauville00}, we say that $\Sym^n(\mathbb{A}^2)$ has \emph{symplectic singularities}.
 
 A variety $Y$ with symplectic singularities is said to admit a \emph{symplectic resolution} $\varphi\colon X \to Y$ if the pullback of $\omega_{\text{reg}}$ extends to a non-degenerate regular 2-form on $X$. This makes $X$ a smooth \emph{holomorphic symplectic} variety. It turns out that $\Sym^n(\mathbb{A}^2)$ admits a (unique) projective symplectic resolution of singularities as follows.
 
 \begin{theorem}[\textbf{Geometric properties of $\Hilb^{[n]}(\mathbb{A}^2)$}]
 \label{thm:fogarty}
 There is a projective, symplectic resolution
  \begin{equation}
     \label{eqn:HilbChow}
 \Hilb^{[n]}(\mathbb{A}^2)\longrightarrow \Sym^n(\mathbb{A}^2)=\mathbb{A}^{2n}_\kk/\mathfrak{S}_n.
 \end{equation}
In particular, $\Hilb^{[n]}(\mathbb{A}^2)$ is a smooth, holomorphic symplectic variety of dimension $2n$ that is projective over the Gorenstein affine singularity $\Sym^n(\mathbb{A}^2)$.
 \end{theorem}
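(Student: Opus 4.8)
The plan is to read everything off the GIT description $\Hilb^{[n]}(\mathbb{A}^2)=Z\git_\chi\GL(n,\kk)$ of Theorem~\ref{thm:HilbnGIT}, extracting the Hilbert--Chow morphism, smoothness, and the symplectic form in turn. First I would construct the morphism~\eqref{eqn:HilbChow} as the canonical projective map from the $\Proj$ quotient to the affine quotient $Z\git_0\GL(n,\kk)=\Spec\big(\kk[Z]^{\GL(n,\kk)}\big)$ that any GIT quotient admits, and then identify the base with $\Sym^n(\mathbb{A}^2)$. The key observation is that $Z$ carries no factor dual to the standard representation, so every monomial containing the vector $i$ has a dangling index and cannot be invariant; hence $\kk[Z]^{\GL(n,\kk)}$ equals the ring of invariants of the commuting variety $\{[B_1,B_2]=0\}$ under simultaneous conjugation, which classical invariant theory identifies with $\kk[x_1,y_1,\dots,x_n,y_n]^{\mathfrak{S}_n}$, the generators $\operatorname{tr}(B_1^aB_2^b)$ matching the power sums $\sum_k x_k^ay_k^b$. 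This yields the projective morphism onto $\Sym^n(\mathbb{A}^2)$, which is an isomorphism over the dense open locus of $n$ distinct reduced points and so is birational.

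The crux, and the step I expect to be the main obstacle, is smoothness of pure dimension $2n$. This is genuinely delicate: the commuting variety $\{[B_1,B_2]=0\}$ is not a complete intersection and is singular (for $n\ge 2$ its dimension $n^2+n$ exceeds the naive $n^2+1$, so the commutator map is nowhere submersive on it), and the relation alone therefore cannot cut out a smooth space. The classical route, following Fogarty, sidesteps this by computing the Zariski tangent space at $[Z]$ as $\Hom_{\kk[x,y]}(I_Z,\kk[x,y]/I_Z)$ and showing, using only that $\mathbb{A}^2$ is a smooth surface, that this space has dimension exactly $2n$ for \emph{every} $[Z]$; constancy of the tangent dimension forces smoothness of pure dimension $2n$. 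The route intrinsic to the present set-up, and the one the subsection is building towards, first augments $\mathcal{Q}$ by a fourth arrow $j\colon 0\to\infty$, turning the representation space $\End(\kk^n)^{\oplus2}\oplus\Hom(\kk,\kk^n)\oplus\Hom(\kk^n,\kk)$ into a symplectic vector space with a $\GL(n,\kk)$-moment map $\mu(B_1,B_2,i,j)=[B_1,B_2]+ij$. Here the point is that $\chi$-stability (Remark~\ref{rem:stability}) forces the stabiliser of each point to be trivial, and for a moment map triviality of the stabiliser is precisely what makes $d\mu$ surjective; hence $\mu^{-1}(0)$ is smooth of the expected dimension along its stable locus and the free $\GL(n,\kk)$-quotient is smooth of dimension $2n$. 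A short lemma (trace the relation to obtain $ji=0$, then induct over the cyclic vectors $B_1^aB_2^bi$) shows $j=0$ on the stable locus, identifying this quotient with $Z\git_\chi\GL(n,\kk)=\Hilb^{[n]}(\mathbb{A}^2)$.

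The symplectic geometry is then immediate: symplectic reduction endows the smooth quotient $\mu^{-1}(0)^s/\GL(n,\kk)$ with a holomorphic $2$-form $\omega$, non-degenerate because the action is free. On the base, the standard form $\sum_{k=1}^n dx_k\wedge dy_k$ on $\mathbb{A}^{2n}$ is $\mathfrak{S}_n$-invariant and descends to $\omega_{\mathrm{reg}}$ on the smooth locus of $\Sym^n(\mathbb{A}^2)$, and over the locus of distinct points the morphism~\eqref{eqn:HilbChow} is an isomorphism carrying $\omega$ to $\omega_{\mathrm{reg}}$. Thus $\omega$ is precisely the extension of the pullback of $\omega_{\mathrm{reg}}$ guaranteed by the symplectic-singularities framework, and its non-degeneracy exhibits~\eqref{eqn:HilbChow} as a projective symplectic resolution. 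Finally $\omega^{\wedge n}$ trivialises the canonical bundle, recording that $\Hilb^{[n]}(\mathbb{A}^2)$ is holomorphic symplectic, while the general fact that symplectic singularities are rational Gorenstein identifies $\Sym^n(\mathbb{A}^2)$ as the Gorenstein affine base, completing the sketch.
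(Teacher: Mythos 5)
Your proposal is correct, and on the two genuinely hard points --- smoothness and the symplectic structure --- it follows exactly the route the paper takes in Theorem~\ref{thm:Fogarty}: augment the quiver of Figure~\ref{fig:quiverQ'} by the arrow $b^*$, prove $j=0$ on the stable locus (Lemma~\ref{lem:j=0}), note that $\chi_1$-stability forces the $\GL(n,\kk)$-action to be free so that the moment-map criterion of Remark~\ref{rem:mu} makes $\mu^{-1}(0)^{\chi_1\textrm{-s}}$ non-singular, and then obtain the $2$-form by symplectic reduction and match it with $\omega_{\mathrm{reg}}$ over the locus of distinct points. The one place where you genuinely diverge is the identification of the affine base. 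The paper realises $\Sym^n(\mathbb{A}^2)$ as the affine quiver variety $\mathfrak{M}_0(\mathbf{v},\mathbf{w})$ and invokes Crawley-Boevey's decomposition theorem \cite[Theorem~1.1]{CB02} to reduce to $n=1$, where the invariant ring is computed in two lines. You instead compute $\kk[Z]^{\GL(n,\kk)}$ for $Z$ as in \eqref{eqn:Z} directly: your observation that no invariant can involve $i$ (there is no covector to contract against) is correct by the first fundamental theorem for $\GL_n$, and it reduces you to the conjugation-invariants of the commuting variety. Be aware, though, that the resulting identification of $\kk\big[\{[B_1,B_2]=0\}\big]^{\GL(n,\kk)}$ with $\kk[x_1,y_1,\dots,x_n,y_n]^{\mathfrak{S}_n}$, while classical, is not formal: it rests on the irreducibility of the commuting variety (Motzkin--Taussky), i.e.\ on the density of simultaneously diagonalisable pairs, to see that the trace functions $\operatorname{tr}(B_1^aB_2^b)$ generate and match the power sums. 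So your route trades one black box for another of comparable depth; it does have the advantage of exhibiting the Hilbert--Chow morphism explicitly as ``record the joint spectrum'', which the paper relegates to the remark after the proof. Your aside that Fogarty's tangent-space computation of $\Hom_{\kk[x,y]}(I_Z,\kk[x,y]/I_Z)$ would also yield smoothness is accurate, but it is not the argument used here and you rightly do not rely on it.
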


 We prove this result by tweaking slightly the GIT construction from Theorem~\ref{thm:HilbnGIT}, leading to the construction of $\Hilb^{[n]}(\mathbb{A}^2)$ as a \emph{Nakajima quiver variety}. We postpone a more general discussion of quiver varieties, choosing for now to describe only the straightforward construction of $\Hilb^{[n]}(\mathbb{A}^2)$. 
 
 Our starting data is a graph and a pair of dimension vectors $\mathbf{v}, \mathbf{w}$: in the case of interest, the graph comprises a single vertex $0$ with one loop, and the dimension vectors are $\mathbf{v}:=n, \mathbf{w}:=1\in \ZZ$. Since $\mathbf{w}=1$, we augment the graph by adding a new \emph{framing vertex}, denoted $\infty$, together with a single edge joining $\infty$ to $0$. The doubled quiver $Q$ associated to this graph shares the same vertex set and replaces each edge by a pair of arrows,  one pointing in each direction (see Figure~\ref{fig:nakajima}). 
  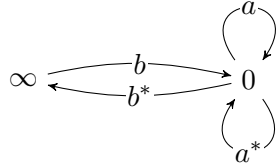
\begin{figure}[ht!]
\centering
\begin{tikzpicture}[yscale=0.77]
\node (M0) at (-1,0) {$\infty$};
\node (M1) at (2,0) {$0$};
\draw[->, out=15, in=165] (M0) to node[gap] {$\small{b}$} (M1);
\draw[->, out=195, in=345] (M1) to node[gap] {$\small{b^*}$} (M0);
\draw [->,looseness=12, out=120, in=60] (M1) to node[gap] {$\small{a}$} (M1);
\draw [->,looseness=12, out=-60, in=-120] (M1) to node[gap] {$\small{a^*}$} (M1);
\end{tikzpicture}
\caption{The quiver $Q$ with relation $aa^*-a^*a+bb^*=0$.}
\label{fig:nakajima}
\end{figure}
 Consider the relation $aa^*-a^*a+bb^*=0$ on $Q$. The category of representations of $Q$ satisfying this relation is equivalent to the category of left modules over the \emph{framed preprojective algebra} 
\[
\Pi:= \kk Q/\big(aa^*-a^*a+bb^*\big).
\]
 We choose not to introduce $b^*b$ as a relation here, but compare Remarks~\ref{rems:preprojrelations}. 
 
  To extract the geometry, we mimic the description from the previous section using quiver representations. That is, a representation of the quiver $Q$ of dimension vector $(1,n)$ satisfying the relation is a quadruple $(B_a,B_{a^*},i,j)\in \mu^{-1}(0)$, where $\mu$ is the map
 \begin{equation}
     \label{eqn:mu}
 \mu\colon \End(\kk^n)^{\oplus 2} \oplus \kk^n \oplus (\kk^n)^*\longrightarrow \End(\kk^n) \; : \; (B_a, B_{a^*}, i,j)\mapsto [B_a,B_{a^*}] +ij.
 \end{equation}
 Consider the action of $\GL(n,\kk)$ on the locus $\mu^{-1}(0)$, where now $g\in\GL(n,\kk)$ acts by 
 \[
 g\cdot (B_a,B_{a^*},i,j) = \big(gB_ag^{-1},gB_{a^*}g^{-1},gi,jg^{-1}\big).
 \]
 Stability conditions for representations of $Q$ are maps $\theta\colon \ZZ^2\to \ZZ$ given by  $\theta(v_\infty,v_0) = -n\theta v_\infty + \theta v_0$
 for some $\theta\in \ZZ$, each corresponding to the character $\chi_\theta\in \GL(n,\kk)^\vee$ from \eqref{eqn:character}. 
 
 \begin{definition}[\textbf{Nakajima quiver variety for} $Q$] 
 \label{def:nakajimaGIT}
 For $\theta\in \ZZ$ and the combinatorial data as above, the associated \emph{Nakajima quiver variety} is defined to be
 the GIT quotient
 \[
 \mathfrak{M}_{\theta}(\mathbf{v},\mathbf{w})= \mu^{-1}(0)\git_{\chi_\theta} \GL(n,\kk):= 
\Proj \bigoplus_{k\geq 0} \kk\big[\mu^{-1}(0)\big]_{k\chi_\theta}.
 \]
 \end{definition}

The special case $\theta=0$ defines the affine GIT quotient which satisfies 
 \[
 \mathfrak{M}_{0}(\mathbf{v},\mathbf{w})\cong
\Spec \Big(\kk\big[\mu^{-1}(0)\big]^{\GL(n,\kk)}\Big).
\]
Every representation of $Q$ of dimension vector $(1,n)$ is necessarily semistable with respect to the zero stability condition $0\in \ZZ^2$, and the inclusion of the $\chi_\theta$-stable locus $\mu^{-1}(0)^{\chi_\theta\text{-s}}$ into $\mu^{-1}(0)$ for any $\theta\in \ZZ$ is a $\GL(n,\kk)$-equivariant map that gives rise to a morphism $f_\theta\colon \mathfrak{M}_{\theta}(\mathbf{v},\mathbf{w})\rightarrow \mathfrak{M}_{0}(\mathbf{v},\mathbf{w})$ that is said to be obtained by \emph{variation of GIT quotient} (VGIT). The following result implies in particular the result of Fogarty~\cite{Fogarty68} for $\Hilb^{[n]}(\mathbb{A}^2)$ mentioned above.
 
 
 \begin{theorem}[\textbf{$\Hilb^{[n]}(\mathbb{A}^2)$ as a quiver variety}]
 \label{thm:Fogarty}
 For $\theta=1$, there is a commutative diagram
\begin{equation}
    \label{eqn:diagFogarty}
 \begin{tikzcd}
\mathfrak{M}_{1}(\mathbf{v},\mathbf{w})\ar[r, "f_1"] \ar[d,swap,"\sim"] & \mathfrak{M}_0(\mathbf{v},\mathbf{w}) \ar[d,"\sim"]   \\
\Hilb^{[n]}(\mathbb{A}^2) \ar[r]  &  \Sym^n(\mathbb{A}^2)
 \end{tikzcd}
\end{equation}
  of schemes over $\kk$ in which the vertical maps are isomorphisms and the horizontal maps are projective symplectic resolutions. 
In particular, the statement of Theorem~\ref{thm:fogarty} holds.
 \end{theorem}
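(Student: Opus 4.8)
The plan is to deduce the left-hand vertical isomorphism from Theorem~\ref{thm:HilbnGIT}, to identify the affine quotient $\mathfrak{M}_0(\mathbf{v},\mathbf{w})$ with $\Sym^n(\mathbb{A}^2)$ through its closed orbits, and then to recognise $f_1$ as the Hilbert--Chow morphism; the smoothness and the symplectic structure will come from the general theory of Nakajima quiver varieties at a generic parameter.

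First I would analyse King stability for $\theta=1$ exactly as in the proof of Theorem~\ref{thm:HilbnGIT}. Since $\theta(v_\infty,v_0)=-n v_\infty+v_0$ is strictly positive on every subrepresentation supported only at the vertex $0$, a representation of dimension $(1,n)$ is $\theta$-(semi)stable if and only if the vector $i$ generates $V_0=\kk^n$ under $B_a,B_{a^*}$; moreover no proper nonzero subrepresentation can achieve $\theta(V')=0$, so $\theta=1$ is generic and semistability coincides with stability. The crucial extra input — the step I expect to be the main obstacle — is that on this stable locus the moment map relation $[B_a,B_{a^*}]+ij=0$ from \eqref{eqn:mu} forces $j=0$. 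This is not a formal consequence of trace identities alone, and the generation hypothesis is essential. I would establish that $j\,w\,i=0$ for every word $w$ in $B_a,B_{a^*}$: starting from $\operatorname{tr}(ij)=0$ and repeatedly using $B_aB_{a^*}-B_{a^*}B_a=-ij$ to move $j$ past such words, all these scalars are seen to vanish, and since the vectors $w\,i(1)$ span $\kk^n$ by generation it follows that $j=0$.

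Once $j=0$, the relation degenerates to $[B_a,B_{a^*}]=0$, so the stable locus of $\mu^{-1}(0)$ is precisely the image of the $\GL(n,\kk)$-equivariant closed embedding $Z\hookrightarrow\mu^{-1}(0)$, $(B_1,B_2,i)\mapsto(B_1,B_2,i,0)$, of the variety $Z$ from \eqref{eqn:Z}, and this embedding matches the respective $\chi_\theta$-stable loci. Passing to GIT quotients therefore yields $\mathfrak{M}_1(\mathbf{v},\mathbf{w})\cong Z\git_{\chi}\GL(n,\kk)\cong\Hilb^{[n]}(\mathbb{A}^2)$ by Theorem~\ref{thm:HilbnGIT}, which is the left vertical isomorphism. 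For the right vertical isomorphism I would use that the affine quotient $\mathfrak{M}_0(\mathbf{v},\mathbf{w})=\Spec(\kk[\mu^{-1}(0)]^{\GL(n,\kk)})$ parametrises closed $\GL(n,\kk)$-orbits, equivalently semisimple representations. A dimension count shows that every such representation is $S_\infty\oplus\bigoplus_{k=1}^n S_{(\lambda_k,\mu_k)}$, the sum of the framing simple and $n$ one-dimensional simples at vertex $0$ indexed by points $(\lambda_k,\mu_k)\in\mathbb{A}^2$, so the closed orbits are in bijection with unordered $n$-tuples and $\mathfrak{M}_0(\mathbf{v},\mathbf{w})\cong\Sym^n(\mathbb{A}^2)$.

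It remains to identify the morphisms and the geometry. The VGIT morphism $f_1$ sends a stable point to the unique closed orbit in its orbit closure, namely its semisimplification; under the identifications above this records the eigenvalues of the commuting pair $(B_1,B_2)$ with multiplicity, which are exactly the points of the length-$n$ subscheme counted with multiplicity. Hence $f_1$ is the Hilbert--Chow morphism and the square commutes. Finally, since $\theta=1$ is generic, the general theory of quiver varieties makes $\mathfrak{M}_1(\mathbf{v},\mathbf{w})$ a smooth holomorphic symplectic variety, with symplectic form arising from the reduction in \eqref{eqn:mu}, while $f_1$ is projective as a VGIT morphism and birational, being an isomorphism over the locus of distinct points. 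As this symplectic form restricts to $\omega_{\text{reg}}$ on the smooth locus of $\mathfrak{M}_0\cong\Sym^n(\mathbb{A}^2)$, the morphism $f_1$ is a projective symplectic resolution, which yields the statement of Theorem~\ref{thm:fogarty}.
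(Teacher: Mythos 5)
Your overall strategy coincides with the paper's: prove $j=0$ on the stable locus to reduce to Theorem~\ref{thm:HilbnGIT}, identify $\mathfrak{M}_0(\mathbf{v},\mathbf{w})$ with $\Sym^n(\mathbb{A}^2)$, and read off $f_1$ as the Hilbert--Chow morphism, with the symplectic structure descending from the reduction. Two of your steps, however, are under-justified at exactly the points where the paper has to do real work. First, the inductive proof that $jMi=0$ does not close as you describe it: writing $jMi=\mathrm{Tr}(Mij)=-\mathrm{Tr}(M[B_a,B_{a^*}])$ and then commuting letters produces correction terms $\mathrm{Tr}(P\,ij\,Q)=jQPi$ in which the word $QP$ has the \emph{same} length as $M$, so induction on word length alone is circular. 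The argument can be repaired (normal-order to reduce to $M=B_a^pB_{a^*}^q$, after which the same identities yield $jB_a^pB_{a^*}^qi=-q\cdot jB_a^pB_{a^*}^qi$ and hence vanishing in characteristic zero), but the paper's Lemma~\ref{lem:j=0} avoids the issue entirely: since $[B_a,B_{a^*}]=-ij$ has rank at most one, $B_a$ and $B_{a^*}$ are simultaneously upper-triangularisable by \cite[Lemma~12.7]{EG02}, so $M[B_a,B_{a^*}]$ is strictly upper triangular and traceless. Note also that $jMi=0$ holds at \emph{every} point of $\mu^{-1}(0)$; the generation hypothesis is needed only to pass from $jMi=0$ for all words $M$ to $j=0$, so your emphasis on where stability enters is slightly misplaced.

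Second, your identification $\mathfrak{M}_0(\mathbf{v},\mathbf{w})\cong\Sym^n(\mathbb{A}^2)$ is, as set up, only a bijection on closed points. Ruling out simple $\Pi$-modules of dimension vector $(1,m)$ with $m\geq 1$ is not a dimension count (it again uses $jMi=0$ to exhibit a proper nonzero submodule), and upgrading the bijection on semisimple representations to an isomorphism of affine varieties requires control of the invariant ring $\kk\big[\mu^{-1}(0)\big]^{\GL(n,\kk)}$. The paper sidesteps both issues by invoking Crawley-Boevey's decomposition theorem \cite[Theorem~1.1]{CB02}, which reduces everything to the elementary computation $\mathfrak{M}_0(1,\mathbf{w})\cong\mathbb{A}^2$. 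Similarly, where you cite ``the general theory'' for smoothness, the paper proves it: the generation statement shows that $\GL(n,\kk)$ acts freely on $\mu^{-1}(0)^{\chi_1\text{-s}}$, and the moment-map criterion of Remark~\ref{rem:mu} then gives non-singularity of that locus and hence of the geometric quotient. None of these gaps is fatal, but each marks a point where your sketch defers to a fact the paper actually establishes.
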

 
  As noted prior to Theorem~\ref{thm:fogarty}, the morphism $f_1$ is the unique projective symplectic resolution of $\Sym^n(\mathbb{A}^2)$. For more on this, see Remark~\ref{rem:uniqueness}.

  To prove this result,  compare the GIT description of $\Hilb^{[n]}(\mathbb{A}^2)$ from Theorem~\ref{thm:HilbnGIT} with that of $\mathfrak{M}_{1}(\mathbf{v},\mathbf{w})$ from Definition~\ref{def:nakajimaGIT}: representations of the quiver from Figure~\ref{fig:nakajima} differ from those of the quiver in Figure~\ref{fig:quiverQ'} only in the existence of the map $j$ corresponding to the arrow $b^*$. The following lemma, which is a special case of \cite[Lemma~2.8]{Nakajima98} (see also Ginzburg~\cite[Proposition~5.6.5]{Ginzburg12}),
  shows that for the representations of interest, this map is necessarily the zero map.
  
  \begin{lemma}
  \label{lem:j=0}
  Let $(B_a,B_{a^*},i,j)\in \mu^{-1}(0)$ be $\chi_1$-stable.  Then $j=0$.
  \end{lemma}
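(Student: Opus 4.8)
We want to show that if $(B_a, B_{a^*}, i, j)\in\mu^{-1}(0)$ is $\chi_1$-stable, then the map $j\colon \kk^n\to \kk$ vanishes. The plan is to exhibit a subrepresentation on which the stability inequality would be violated unless $j=0$. Recall that $\chi_1$-stability means that every proper nonzero subrepresentation $0\subsetneq V'\subsetneq V$ satisfies $\theta(V')>0$ for $\theta=1$, where $\theta(V') = -n\dim_\kk V'_\infty + \dim_\kk V'_0$; here $V_\infty=\kk$ and $V_0=\kk^n$.

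**The key construction.**
First I would consider the subspace $W\subseteq V_0=\kk^n$ defined as the largest subspace that is preserved by both $B_a$ and $B_{a^*}$ and annihilated by $j$; equivalently, take $W=\bigcap_{p} \ker(j\circ p)$ where $p$ ranges over all words in $B_a, B_{a^*}$. By construction, $W$ is $B_a$- and $B_{a^*}$-invariant and satisfies $j(W)=0$, so together with $V'_\infty=0$ it defines a genuine subrepresentation $V'$ of the quiver with $V'_\infty=0$ and $V'_0=W$. The candidate for a destabilising subrepresentation is this $W$, and the hard part will be arguing that $W$ must be all of $\kk^n$ whenever $j\neq 0$.

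**Exploiting the preprojective relation.**
The crucial input is the relation $[B_a,B_{a^*}] + ij = 0$ defining $\mu^{-1}(0)$. The idea is that the image $\operatorname{im}(j)\subseteq\kk$ controls the failure of $B_a$ and $B_{a^*}$ to commute, and I would use this to show $W$ is large. If $j\neq 0$, then $W=\ker(j)$ has codimension one, but $W$ need not be $B_a$- and $B_{a^*}$-invariant on its own; however, on $W$ the operator $ij$ vanishes, so $[B_a, B_{a^*}]$ restricted to the appropriate subspace is zero, and one can propagate invariance. The cleanest route is to argue that a subrepresentation with $V'_\infty = 0$ gives $\theta(V') = \dim_\kk V'_0 > 0$ automatically, so stability forces no constraint from that side; the real leverage comes from the \emph{dual} consideration. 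I would instead test stability on the quotient: consider the subspace $U\subseteq\kk^n$ generated by $i$ under $B_a, B_{a^*}$, which gives a subrepresentation with $V'_\infty = \kk$. Stability then forces $U=\kk^n$ (mirroring the proof of Theorem~\ref{thm:HilbnGIT}), so $i$ generates $\kk^n$.

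**Closing the argument.**
With $i$ generating $\kk^n$ as a $\kk[B_a,B_{a^*}]$-module, I would compute the trace of the relation. Taking the trace of $[B_a,B_{a^*}] + ij = 0$ gives $\operatorname{tr}(ij) = ji = 0$, since the trace of a commutator vanishes; here $ji\in\End(\kk)=\kk$. Then one shows $j$ annihilates not just $i$ but all of $\kk^n$: since $ji=0$ and the relation reads $[B_a,B_{a^*}] = -ij$, one checks inductively that $j$ kills $B_a^{\,p}B_{a^*}^{\,q}\,i$ for all $p,q$ by using $jB_a - B_a j$-type identities derived from the relation, together with the generation statement. The main obstacle is precisely this inductive step: one must leverage the relation to show that $\ker(j)$ is $B_a$- and $B_{a^*}$-stable and contains $i$, hence contains the whole generated submodule $\kk^n$, forcing $j=0$. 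This is exactly the content of \cite[Lemma~2.8]{Nakajima98}, and the trace identity $ji=0$ is the seed that makes the induction run.
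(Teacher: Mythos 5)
Your proposal correctly assembles the two easy ingredients, which also appear in the paper's proof: stability forces $V_0$ to be spanned by the vectors $Mi$ as $M$ ranges over words in $B_a,B_{a^*}$, and taking the trace of $[B_a,B_{a^*}]+ij=0$ gives $ji=0$. But the lemma is not yet proved at that point: $ji=0$ only says $j$ kills $i$ itself (the empty word), whereas one needs $jMi=0$ for \emph{every} word $M$, equivalently $\mathrm{Tr}(M[B_a,B_{a^*}])=0$ for every such $M$. This is exactly the step you label ``the main obstacle'' and then do not carry out. The gesture towards ``$jB_a-B_aj$-type identities'' does not parse: $j\colon \kk^n\to\kk$ and $B_a\in\End(\kk^n)$, so $jB_a$ is defined but $B_aj$ is not, and there is no commutation relation between $j$ and $B_a$ to induct with. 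A naive induction on word length also stalls: writing $jMi=-\mathrm{Tr}(M[B_a,B_{a^*}])$ and expanding produces traces of words of the \emph{same} length, not shorter ones, so the induction does not close without additional input (one can in fact force it through by normal-ordering words via $B_{a^*}B_a=B_aB_{a^*}+ij$ and using $\operatorname{char}\kk=0$ to solve $(1+q)\,jB_a^pB_{a^*}^qi=0$, but none of that is in your write-up). Likewise, your claim that one should show $\ker(j)$ is $B_a$- and $B_{a^*}$-stable is not something the relation gives you directly when $j\neq 0$; and your opening construction of $W$ is, as you note yourself, a dead end, since subrepresentations with $V'_\infty=0$ never violate stability for $\theta=1$.

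The paper closes this gap with a specific linear-algebra input that your proposal is missing: since $[B_a,B_{a^*}]=-ij$ has rank at most one, the matrices $B_a$ and $B_{a^*}$ can be \emph{simultaneously upper-triangularised} (citing \cite[Lemma~12.7]{EG02}). Then every word $M$ is upper triangular, $[B_a,B_{a^*}]$ is strictly upper triangular, hence $M[B_a,B_{a^*}]$ is strictly upper triangular and traceless, giving $jMi=-\mathrm{Tr}(M[B_a,B_{a^*}])=0$ for all $M$ at once. Combined with the generation statement this yields $j=0$. Deferring the decisive step to ``the content of \cite[Lemma~2.8]{Nakajima98}'' is circular here, since that is precisely the statement being proved; as it stands the proposal has a genuine gap at its central step.
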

  \begin{proof}
  Since $ij$ factors through $V_\infty=\kk$, the equality $[B_a,B_{a^*}]=-ij$ implies that $[B_a,B_{a^*}]$ has rank at most one. After a change of basis on $\kk^n$, $B_a$ and $B_{a^*}$ can simultaneously be put in upper triangular form (see~\cite[Lemma~12.7]{EG02} for an elegant proof of this linear algebra statement). For any $M\in \End(\kk^n)$, applying the cyclic property of trace and the equation defining $\mu^{-1}(0)$ gives
  \[
  jMi = \mathrm{Tr}(jMi) = \mathrm{Tr}(Mij) =  -\mathrm{Tr}(M[B_a,B_{a^*}]).
  \]
  Now let $M$ be the product of finitely many occurrences of $B_a$ and $B_{a^*}$ taken in any order. Then $M$ is upper triangular and, since $[B_a,B_{a^*}]$ is strictly upper triangular, so is $M[B_a,B_{a^*}]$, and hence $jMi=0$ for every such $M$. The proof of Theorem~\ref{thm:HilbnGIT} applies verbatim to show that each vector in $V_0$ is a linear combination of vectors in the image of maps $Mi\colon V_\infty\to V_0$ for endomorphisms $M$ as above.   Since $jMi=0$ for such $M$, the map $j\colon V_0\to V_\infty$ is zero.
  \end{proof}
 
  \begin{remark}
  \label{rem:mu}
 Before embarking on the proof of Theorem~\ref{thm:Fogarty}, it is worth commenting on the fact that the map $\mu$ from \eqref{eqn:mu} is a \emph{moment map} for the action of $\GL(n,\kk)$ on the space $\End(\kk^n)^{\oplus 2}\oplus \kk^n\oplus (\kk^n)^*$ \cite[Section~4.1]{Ginzburg12}. The key consequence for the next proof is that the locus of points in $\mu^{-1}(0)$ with finite isotropy group is contained in the non-singular locus of $\mu^{-1}(0)$ \cite[Lemma~4.1.7]{Ginzburg12}.
  \end{remark}
  
 \begin{proof}[Proof of Theorem~\ref{thm:Fogarty}]
 We first construct the left-hand isomorphism from \eqref{eqn:diagFogarty}. The inclusion of the locus $Z$ from \eqref{eqn:Z} into $\mu^{-1}(0)$ by sending $(B_a,B_{a^*},i)\mapsto (B_a,B_{a^*},i,0)$ is $\GL(n,\kk)$-equivariant. In light of Lemma~\ref{lem:j=0}, this map identifies the $\chi_1$-stable locus of $Z$ with the $\chi_1$-stable locus of $\mu^{-1}(0)$. The isomorphism $\Hilb^{[n]}(\mathbb{A}^2)\cong \mathfrak{M}_1(\mathbf{v},\mathbf{w})$ follows by comparing the GIT quotient constructions from Theorem~\ref{thm:HilbnGIT} and Definition~\ref{def:nakajimaGIT} (see Section~\ref{sec:fine} for an alternative construction of this isomorphism).
 
 We now show that $\mathfrak{M}_1(\mathbf{v},\mathbf{w})$ is non-singular. Let $(B_a,B_{a^*},i,j)\in \mu^{-1}(0)$ be a $\chi_1$-stable point and let $V$ be the corresponding $\Pi$-module. The proof of Lemma~\ref{lem:j=0} shows that each vector in $V_0$ is a linear combination of vectors in the image of maps $Mi\colon V_\infty\to V_0$, where each $M$ is the product of finitely many occurrences of $B_a$ and $B_{a^*}$ taken in any order. In particular, if $g B_ag^{-1}=B_a$, $gB_{a^*}g^{-1}=B_{a^*}$ and $gi=i$ for some $g\in \GL(n,\kk)$, then $g$ acts trivially on the whole of $V_0=\kk^n$. Thus, the action of $\GL(n,\kk)$ on $\mu^{-1}(0)$ is free at the point $(B_a,B_{a^*},i,j)$, and since this point was arbitrary in the $\chi_1$-stable locus $\mu^{-1}(0)^{\chi_{1}\textrm{-s}}$, it follows that $\GL(n,\kk)$ acts freely on $\mu^{-1}(0)^{\chi_{1}\textrm{-s}}$. Since $\mu$ is the moment map for the $\GL(n,\kk)$-action, the locus $\mu^{-1}(0)^{\chi_{1}\textrm{-s}}$ is non-singular by Remark~\ref{rem:mu}. The group $\GL(n,\kk)$ acts freely on $\mu^{-1}(0)^{\chi_{1}\textrm{-s}}$, so the geometric quotient
 $\mathfrak{M}_1(\mathbf{v},\mathbf{w})$ is non-singular. 

 For the right-hand isomorphism of affine varieties from \eqref{eqn:diagFogarty}, we proceed in two steps. First, since $\mathbf{v}=n$, the decomposition theorem of Crawley--Boevey~\cite[Theorem~1.1]{CB02} gives an isomorphism 
 \[
 \mathfrak{M}_0(\mathbf{v},\mathbf{w})\cong\Sym^n\big( \mathfrak{M}_0(1,\mathbf{w})\big),
 \]
 so it suffices to show that $\mathfrak{M}_0(1,\mathbf{w})\cong \mathbb{A}^2$. For this, the domain of $\mu$ is $\mathbb{A}^4=\Spec \kk[x_a,x_{a^*},x_i,x_j]$, and the preprojective relation cuts out $\mu^{-1}(0)$ as the hypersurface $x_ix_j=0$. The torus $\GL(1,\kk)$ acts trivially on the variables $x_a$ and $x_{a^*}$, and it acts with weights $+1$ and $-1$ on $x_i$ and $x_j$ respectively. Thus, the coordinate ring of $\mathfrak{M}_0(1,\mathbf{w})$ is 
 \[
 \kk\big[\mu^{-1}(0)\big]^{\GL(1,\kk)}\cong \bigg(\frac{\kk[x_a,x_{a^*},x_i,x_j]}{(x_ix_j)}\bigg)^{\GL(1,\kk)}\cong \kk[x_a,x_{a^*}],
 \]
 giving $\mathfrak{M}_0(1,\mathbf{w})\cong \mathbb{A}^2$ as required.

 Finally, the VGIT morphism $f_1$ is well-known to be projective. In addition, the 2-form on $\mu^{-1}(0)$ obtained by restriction from the 2-form $\omega$ satisfying 
 \[
 \omega\big((B_a,B_{a^*},i,j),(B'_a,B'_{a^*},i',j')\big) = \mathrm{Tr}\big( B_aB'_{a^*} - B_{a^*} B_a^\prime\big) + \mathrm{Tr}(ij'-i'j)
 \]
 descends to non-degenerate 2-forms $\omega_1$ and $\omega_0$ on $\mathfrak{M}_1(\mathbf{v},\mathbf{w})$ and the non-singular locus of $\mathfrak{M}_0(\mathbf{v},\mathbf{w})$ respectively. Since $f_1$ is obtained from the $\GL(n,\kk)$-equivariant inclusion of the $\chi_1$-stable locus into $\mu^{-1}(0)$, these 2-forms are compatible in the sense that $\omega_1 \cong f_1^*(\omega_0)$. Thus, $\mathfrak{M}_1(\mathbf{v},\mathbf{w})$ is holomorphic symplectic and $f_1$ is a symplectic resolution of singularities.
\end{proof} 

\begin{remark}
 The heavy lifting in constructing the right-hand isomorphism $\mathfrak{M}_0(\mathbf{v},\mathbf{w})\cong \Sym^n(\mathbb{A}^2)$ in Theorem~\ref{thm:Fogarty} is done by Crawley-Boevey's decomposition theorem. It's worth noting, however, that the symmetric product arises naturally from our quiver-theoretic description of $\Hilb^{[n]}(\mathbb{A}^2)$. Indeed, as in the proof of Lemma~\ref{lem:j=0}, the matrices $B_a, B_{a^*}$ can simultaneously be put in upper triangular form, and 
 the set of $n$ pairs of eigenvalues $(\lambda_k, \nu_k)_{1\leq k\leq n}$ of the matrices $B_a, B_{a^*}$, well-defined up to reordering, defines a point in $\Sym^n (\mathbb{A}^2)$. This provides a set-theoretic description of the Hilbert--Chow morphism
 $\Hilb^{[n]}(\mathbb{A}^2)\rightarrow \Sym^n(\mathbb{A}^2)$ given by $[Z] \mapsto \sum_{p\in \mathbb{A}^2} \big(\text{mult}_p(Z)\big) p$ that appears as the lower horizontal isomorphism in the commutative diagram \eqref{eqn:diagFogarty}.
\end{remark}

\subsection{As a fine moduli space}
\label{sec:fine}
 As in the proof of Theorem~\ref{thm:HilbnGIT}, every $\theta$-semistable $\Pi$-module is $\theta$-stable whenever $\theta\neq 0$. Since $(1,n)$ is indivisible, it follows from \cite[Proposition~5.3]{King94} that $\mathfrak{M}_{\theta}(\mathbf{v},\mathbf{w})$ is the fine moduli space of $\theta$-stable $\Pi$-modules of dimension vector $(1,n)$. 
 
 \begin{definition}[\textbf{Tautological bundle}]
 The \emph{tautological bundle} on $\mathfrak{M}_{\theta}(\mathbf{v},\mathbf{w})$ is the vector bundle $\mathcal{O}_{\mathfrak{M}}\oplus \mathcal{V}_0$ such that the fibre over any closed point is the corresponding  $\theta$-stable $\Pi$-module; note that $\rank(\mathcal{V}_0) = n$. The \emph{tautological $\kk$-algebra homomorphism} $\Pi\to \End(\mathcal{O}_{\mathfrak{M}}\oplus \mathcal{V}_0)$ encodes the maps between summands of $\mathcal{O}_{\mathfrak{M}}\oplus \mathcal{V}_0$ determined by arrows in the quiver $Q$ from Figure~\ref{fig:nakajima}.
 \end{definition}
 
 Any flat family of $\theta$-stable $\Pi$-modules of dimension vector $(1,n)$ over a scheme $X$ is the pullback of $\mathcal{O}_{\mathfrak{M}}\oplus \mathcal{V}_0$ via a unique morphism $X\to \mathfrak{M}_{\theta}(\mathbf{v},\mathbf{w})$. The only difference from the moduli space description  from Remark~\ref{rem:fine1} is the inclusion here of the zero map $j\colon \mathcal{V}_0\to \mathcal{O}_{\mathfrak{M}}$. 
 

This fine moduli interpretation of $\mathfrak{M}_{\theta}(\mathbf{v},\mathbf{w})$ allows for a more algebraic proof of Theorem~\ref{thm:Fogarty}. 

 \begin{proof}[Alternative proof of Theorem~\ref{thm:Fogarty}, paragraph one]
 The image of the class of $b^*$ under the tautological $\kk$-algebra homomorphism $\Pi\to \End(\mathcal{O}_\mathfrak{M}\oplus \mathcal{V}_0)$ is a map of vector bundles $\mathcal{V}_0\to \mathcal{O}_\mathfrak{M}$. Since $j=0$ for every every $\chi_1$-stable point $(B_1,B_2,i,j)\in \mu^{-1}(0)$, this map is zero on each fibre, and since $\mathfrak{M}_1(\mathbf{v},\mathbf{w})$ is smooth and hence reduced, the map $\mathcal{V}_0\to \mathcal{O}_\mathfrak{M}$ is zero. It follows that $\Pi\to \End(\mathcal{O}_\mathfrak{M}\oplus \mathcal{V}_0)$ factors through $\Pi/(b^*)\cong \Lambda$, so $\mathcal{O}_\mathfrak{M}\oplus \mathcal{V}_0$ is in fact a flat family of $\theta$-stable $\Lambda$-modules of dimension vector $(1,n)$. The universal property of $\Hilb^{[n]}(\mathbb{A}^2)$ determines a morphism 
 $\iota\colon \mathfrak{M}_1(\mathbf{v},\mathbf{w})\rightarrow \Hilb^{[n]}(\mathbb{A}^2)$ satisfying $\iota^*(\mathcal{T})\cong \mathcal{V}_0$ and $\iota^*(\mathcal{O}_{\Hilb})\cong \mathcal{O}_{\mathfrak{M}}$. For the inverse, lift $\Lambda\to \End(\mathcal{O}_{\Hilb}\oplus \mathcal{T})$ to 
 $\Pi\to \End(\mathcal{O}_{\Hilb}\oplus \mathcal{T})$ by defining the map $\mathcal{T}\to \mathcal{O}_{\Hilb}$ associated to $b^*$ to be the zero map; the relation is satisfied as the maps for $b^*$ and $aa^*-a^*a$ are zero. Then $\mathcal{O}_{\Hilb}\oplus\mathcal{T}$  is a flat family of $\theta$-stable $\Pi$-modules of dimension vector $(1,n)$, and the universal property of
 $\mathfrak{M}_1(\mathbf{v},\mathbf{w})$ induces $\iota^{-1}$.
 \end{proof}
 
 \begin{remark}
 \label{rem:uniqueness}
 The map sending $\eta\in \ZZ$ to the line bundle $\det(\mathcal{V}_0)^{\otimes \eta}$ on $\Hilb^{[n]}(\mathbb{A}^2)$ induces a $\mathbb{Q}$-linear isomorphism\footnote{After identifying $\Pic(\Hilb^{[n]}(\mathbb{A}^2))\otimes_\ZZ \QQ\cong N^1(X/Y)$ for $X= \Hilb^{[n]}(\mathbb{A}^2)$ and $Y=\Sym^n(\mathbb{A}^2)$, this isomorphism is the `linearisation map' that we introduce in Definition~\ref{def:linearisation} below.} $\QQ\cong \Pic(\Hilb^{[n]}(\mathbb{A}^2))\otimes_\ZZ \QQ$ that identifies the ample cone of $\Hilb^{[n]}(\mathbb{A}^2)$ with the positive half-line in $\QQ$. The morphism $\Hilb^{[n]}(\mathbb{A}^2)\to \Sym^n(\mathbb{A}^2)$ obtained by VGIT from $\eta>0$ to $\eta=0$ contracts a divisor, so the closure of the ample cone is the cone of movable divisors on $\Hilb^{[n]}(\mathbb{A}^2)$. If there were a second projective symplectic resolution $X^\prime\to \Sym^n(\mathbb{A}^2)$, then the ample cone of $X^\prime$ would be an open half-line in the movable cone that lies disjoint from the ample cone of $\Hilb^{[n]}(\mathbb{A}^2)$, a contradiction.
 See Example~\ref{exa:A1n2} for a singularity admitting more than one projective symplectic resolution. 
  \end{remark} 

\section{Nakajima quiver varieties for the McKay graph}

We now introduce the necessary background on ADE singularities and Nakajima quiver varieties that we need for the construction of the Hilbert scheme of $n$ points on an ADE singularity. To avoid a proliferation of notation, we recycle the letters $\mathcal{Q}$ and $Q$ for the key quivers in what follows.

\subsection{On ADE singularities}
The set of finite nontrivial subgroups $\Gamma\subset \SL(2,\kk)$ are classified up to conjugation by Dynkin diagrams of type ADE. In each case, the subalgebra $\kk[x,y]^\Gamma$ of $\Gamma$-invariant polynomials is well known to admit three $\kk$-algebra generators and one relation,  giving rise to an isomorphism $\kk[x,y]^\Gamma \cong \kk[u,v,w]/(f)$. The resulting quotient singularity $\mathbb{A}^2/\Gamma = \Spec \: \kk[x,y]^\Gamma$ is therefore isomorphic to the hypersurface $(f=0)\subset \mathbb{A}^3=\Spec \kk[u,v,w]$. To emphasise the classification, we refer to $\mathbb{A}^2/\Gamma$ as the ADE \emph{singularity} associated to $\Gamma$, though in this context it is typically called a \emph{Kleinian} singularity. We summarise the classification in Table~\ref{tab:ADE}.
\begin{table}[!ht]
     \centering
     \begin{tabular}{ccc}
     \underline{Dynkin diagram} & \underline{Finite subgroup $\Gamma$ in $\SL(2,\kk)$} & \underline{Defining polynomial $f\in \kk[u,v,w]$} \\ 
  $A_r\text{ for }r\geq 1$          & cyclic group $\mu_{r+1}$ & $uw-v^{r+1}$ \\
  $D_r\text{ for }r\geq 4$          & binary dihedral group $\mathbb{D}_{4(r-2)}$ & $u^2+v^2w + w^{r-1}$ \\ 
  $E_6$          & binary tetrehedral group $\mathbb{T}_{24}$ & $u^2+v^3+w^4$ \\
  $E_7$          & binary octahedral group $\mathbb{O}_{48}$ & $u^2+v^3+vw^3$ \\
  $E_8$          & binary icosahedral group $\mathbb{I}_{120}$ & $u^2+v^3+w^5$ \\
   & & 
       \end{tabular}
     \caption{Classification and defining equations for ADE singularities}
     \label{tab:ADE}
 \end{table}
 
 \subsection{The preprojective algebra for the McKay graph}
 The \emph{McKay graph} of $\Gamma$ is defined to be the graph whose vertex set is $\text{Irr}(\Gamma)=\{\rho_0, \dots, \rho_r\}$, the set  of isomorphism classes of irreducible representations. The number of edges in the graph joining $\rho_i$ to $\rho_j$ equals $\dim \Hom_\Gamma(\rho_j,\rho_i\otimes V)$, where $V$ is the 2-dimensional representation given by the inclusion $\Gamma\subset \SL(2,\kk)$. A famous result of McKay~\cite{McKay80} establishes that this graph is an affine Dynkin diagram of the same ADE type as the conjugacy class of $\Gamma$, where the extended vertex corresponds to the trivial representation $\rho_0$. In fact, the affine root system $\Phi_{\textrm{aff}}$ of type ADE is determined by the free abelian group
 \[
 \Rep(\Gamma):=\bigoplus_{\rho\in \text{Irr}(G)} \mathbb{Z} \rho
 \]
 with symmetric bilinear form defined by the Cartan matrix $C_\Gamma=2\text{Id}-A_\Gamma$, where $A_\Gamma$ is the adjacency matrix of the McKay graph of $\Gamma$. The irreducible representations provide a system of simple roots, and the regular representation of $\Gamma$ is the minimal imaginary root $\delta$. In particular, we have
  \begin{equation}
      \label{eqn:dimreps}
  2\dim(\rho_k) = \sum_{\ell\sim k} \dim(\rho_\ell)
    \end{equation}
  for each $0\leq k\leq r$, where $\ell\sim k$ indicates that vertex $\rho_\ell$ is adjacent to $\rho_k$ in the McKay graph.   We use the excellent Dynkins diagrams package of McKay\footnote{No, not the same McKay.}~\cite{McKay21} 
  to draw the affine ADE diagrams in Figure~\ref{fig:Dynkin}, indicating next to each vertex the dimension of the corresponding irreducible representation of $\Gamma$; observe that equation \eqref{eqn:dimreps} holds.
 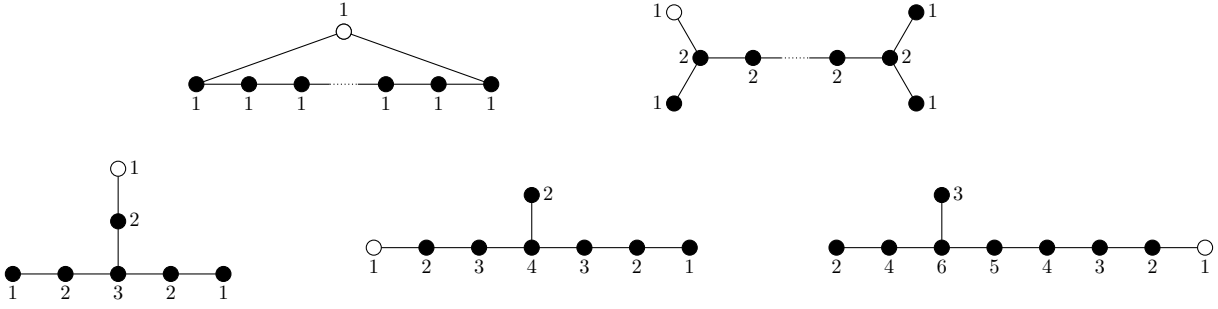
\begin{figure}[ht!]
\centering
\begin{tikzpicture}[yscale=0.77]
\node (M0) at (-3,3) {\dynkin[%
  labels={1,1,1,1,1,1,1},
  scale=2,
  extended] A{***...***}};
\node (M1) at (3,3) {\dynkin[%
  labels={1,1,2,2,2,2,1,1},
  label directions={,,left,,,right,,},
  scale=2,
  extended] D{***...****}};
  \node (M2) at (-6,0) {\dynkin[%
  labels={1,1,2,2,3,2,1},
  scale=2,
  extended] E{******}};
  \node (M2) at (-0.5,0) {\dynkin[%
  labels={1,2,2,3,4,3,2,1},
  scale=2,
  extended] E{*******}};
  \node (M2) at (6,0) {\dynkin[%
  labels={1,2,3,4,6,5, 4, 3, 2},
  scale=2,
  extended] E{********}};
\end{tikzpicture}
\caption{Affine Dynkin diagrams labelled with dimensions of irreducible representations}
\label{fig:Dynkin}
\end{figure}

   The \emph{McKay quiver} $Q^\Gamma$ is the doubled quiver of the McKay graph of $\Gamma$ from Figure~\ref{fig:Dynkin}. That is $Q^\Gamma$ is the quiver with vertex set $\Irr(\Gamma)$, and where the arrow set $Q^\Gamma_1$ is obtained by replacing each edge in the graph by a pair of arrows pointing in opposite directions along the edge. For any $a\in Q^\Gamma_1$, we write $a^*$ for the arrow pointing in the opposite direction, i.e.\ the vertices at the tails and heads satisfy $\tail(a)= \head(a^*)$ and $\head(a)= \tail(a^*)$. 
   Let $\kk Q^\Gamma$ denote the path algebra of $Q^\Gamma$. For $0\leq k\leq r$, write $e_k\in \kk Q^\Gamma$ for the idempotent corresponding to the trivial path at vertex $\rho_k$. Let $\epsilon\colon Q^\Gamma_1\to \{\pm 1\}$ be any map such that $\epsilon(a) \neq \epsilon(a^*)$ for all $a\in Q^\Gamma_1$. The \emph{preprojective algebra} of $\Gamma$ is the noncommutative algebra $\Pi_\Gamma$ obtained as the quotient of $\kk Q^\Gamma$ by the two-sided ideal 
    \begin{equation}
        \label{eqn:PiGammarelations}
    \left\langle \sum_{a\in Q^\Gamma_1 \: : \: \head(a) = \rho_k} \epsilon(a)aa^* \mid 0\leq k\leq r\right\rangle.
     \end{equation}
 
 For $0\leq k\leq r$, let $\rho_k^*$ denote the representation dual to $\rho_k$, and define
 \[
 R_k:= \Hom_{\Gamma}\big(\rho_k,\kk[x,y]\big)\cong \Hom_\Gamma\big(\rho_0,\kk[x,y]\otimes_\kk \rho_k^*\big)\cong \big(\kk[x,y]\otimes_\kk \rho_k^*\big)^\Gamma.
 \]
  Note that $R_0=\kk[x,y]^\Gamma$. The image of the (injective) evaluation map $R_k\otimes \rho_k\to \kk[x,y]$ comprises the direct sum of all $\Gamma$-submodules of $\kk[x,y]$ that are isomorphic to $\rho_k$, and there is an isomorphism  
 \begin{equation}
 \label{eqn:decompkxy}
 \kk[x,y]\cong \bigoplus_{0\leq k\leq r} R_k\otimes_\kk \rho_k
 \end{equation}
 of $R_0$-modules. Each $R_k$ is reflexive as an $R_0$-module, so it is the space of global sections of a coherent sheaf on $\mathbb{A}^2/\Gamma$. The following result therefore provides a geometric interpretation of $\Pi_\Gamma$. 
 
  \begin{lemma}
 \label{lem:preprojEnd}
 There is a $\kk$-algebra isomorphism
 $ \Pi_\Gamma\cong \End_{R_0}\!\big(\bigoplus_{0\leq k\leq r} R_k\big)$.
 \end{lemma}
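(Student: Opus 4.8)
The plan is to produce an explicit $\kk$-algebra isomorphism
\[
\Phi\colon \Pi_\Gamma \longrightarrow \End_{R_0}\!\Big(\bigoplus_{0\leq k\leq r} R_k\Big)
\]
by sending each arrow of the McKay quiver $Q^\Gamma$ to a map between the summands $R_k$, and then checking that $\Phi$ is well-defined (kills the preprojective relations), surjective, and injective. The starting point is the decomposition \eqref{eqn:decompkxy}, which exhibits $\kk[x,y]$ as a $\Gamma$-equivariant $R_0$-module $\bigoplus_k R_k\otimes_\kk \rho_k$. Because multiplication by the coordinates $x,y$ spans the $2$-dimensional representation $V$, multiplication $\kk[x,y]\otimes V \to \kk[x,y]$ is $\Gamma$-equivariant and, after taking $\Gamma$-invariants of $\Hom$-spaces, decomposes into a direct sum of maps $R_k \to R_\ell$ indexed exactly by the pairs $(\rho_k,\rho_\ell)$ with $\dim\Hom_\Gamma(\rho_\ell,\rho_k\otimes V)\neq 0$, i.e.\ by the edges of the McKay graph. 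This is the conceptual heart of the matter: the arrows of $Q^\Gamma$ are in natural bijection with a basis of these $\Gamma$-invariant multiplication maps, so one defines $\Phi(a)\colon R_{\tail(a)}\to R_{\head(a)}$ to be the corresponding component of multiplication by $x$ or $y$.

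First I would set up this bijection carefully. Writing $V=\langle x,y\rangle$ and using $\dim\Hom_\Gamma(\rho_\ell,\rho_k\otimes V)=$ the number of edges between $\rho_k$ and $\rho_\ell$, I would fix once and for all a basis of each space $\Hom_\Gamma(\rho_\ell, \rho_k\otimes V)$, matched to the arrows $a$ with $\tail(a)=\rho_k$, $\head(a)=\rho_\ell$; dualising via $V\cong V^*$ (which holds since $\Gamma\subset\SL(2,\kk)$) pairs $a$ with $a^*$ compatibly. Each such basis element, composed with the evaluation isomorphism \eqref{eqn:decompkxy}, yields an $R_0$-linear map $R_k\to R_\ell$, and these are the images $\Phi(a)$. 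Extending multiplicatively over the path algebra $\kk Q^\Gamma$ gives a $\kk$-algebra homomorphism $\kk Q^\Gamma \to \End_{R_0}(\bigoplus_k R_k)$ once one notes the idempotent $e_k$ maps to the projection onto $R_k$.

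Next I would verify that the preprojective relations \eqref{eqn:PiGammarelations} lie in the kernel, so that $\Phi$ descends to $\Pi_\Gamma$. The point is that for each vertex $\rho_k$ the element $\sum_{\head(a)=\rho_k}\epsilon(a)\,aa^*$ maps to a component of the composite $R_k \xrightarrow{} R_k$ built from multiplying by $x$ then $y$ and subtracting the reverse (with signs governed by $\epsilon$), which is precisely multiplication by a scalar multiple of $x\wedge y$ acting on the $\rho_k$-isotypic piece; since $x$ and $y$ commute in $\kk[x,y]$, the symmetric part cancels and the skew part is forced to vanish after projecting back to the $\rho_k$-component. Concretely this is the statement that the commutator-like combination dictated by the sign function $\epsilon$ computes the image of the preprojective relation, and it is zero because $\kk[x,y]$ is commutative; this is the one place where the choice of $\epsilon$ with $\epsilon(a)\neq\epsilon(a^*)$ enters, ensuring the two orderings are subtracted rather than added.

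The final step is bijectivity. For surjectivity, I would invoke the known description of $\End_{R_0}(\bigoplus_k R_k)$: because the $R_k$ are the reflexive $R_0$-modules realising the summands of $\kk[x,y]$, every $R_0$-homomorphism $R_k\to R_\ell$ is given by multiplication by elements of $\kk[x,y]$, and the classical McKay correspondence shows that such $R_0$-module maps are generated over $R_0=\kk[x,y]^\Gamma$ by the degree-one multiplications corresponding to arrows, so the image of $\Phi$ is the whole endomorphism algebra. For injectivity, I would compare graded dimensions: both sides carry a natural grading (by path length on $\Pi_\Gamma$, and by the internal grading of $\kk[x,y]$ on the $\Hom$-spaces), and a dimension count in each graded degree — using that $\Pi_\Gamma$ in ADE type is a well-understood finite-growth algebra whose graded pieces are governed by the Coxeter/Cartan combinatorics — shows the two graded Hilbert series agree, so the surjection $\Phi$ is an isomorphism. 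The main obstacle I expect is making the correspondence between arrows and basis vectors of $\Hom_\Gamma(\rho_\ell,\rho_k\otimes V)$ fully canonical with the correct signs, so that the preprojective relation maps to zero rather than merely to something that vanishes up to an unspecified scalar; pinning down these structure constants (equivalently, the interaction of $\epsilon$ with the equivariant multiplication by $x,y$) is the delicate part, whereas surjectivity and the dimension count, though technical, follow from the established McKay correspondence dictionary.
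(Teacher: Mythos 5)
Your route is genuinely different from the paper's. The paper never touches individual arrows: it quotes Auslander's theorem to identify the skew group algebra $S=\kk[x,y]*\Gamma$ with $\End_{R_0}(\kk[x,y])$, uses idempotents $f_k\in\kk[\Gamma]$ to cut out $f_\ell S f_k\cong\Hom_{R_0}(R_k,R_\ell)$, and then quotes Crawley-Boevey--Holland for the Morita-type isomorphism $fSf\cong\Pi_\Gamma$; the lemma falls out by summing over pairs $(k,\ell)$. You instead build the isomorphism by hand, sending each arrow of $Q^\Gamma$ to a component of multiplication by $x$ or $y$ under the decomposition \eqref{eqn:decompkxy}, killing the relations via commutativity of $\kk[x,y]$, and finishing with surjectivity plus a graded dimension count. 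Your approach is more self-contained and more illuminating about where the arrows ``live''; the paper's is shorter and outsources all the hard analysis to two citations.

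The caveat is that the two steps you defer are exactly the two cited theorems, so the difficulty has been relocated rather than removed. First, the claim that every $R_0$-module homomorphism $R_k\to R_\ell$ is realised by multiplication by elements of $\kk[x,y]$, and that these are generated in degree one over $R_0$, is precisely Auslander's theorem (restricted to isotypic summands); it is not formal, and it is where the hypothesis that $\Gamma$ acts freely on $\mathbb{A}^2\setminus\{0\}$ enters. Reflexivity alone only gives you maps by multiplication by rational functions. Second, the statement that $\sum_{\head(a)=\rho_k}\epsilon(a)\Phi(a)\Phi(a^*)$ is, on the nose, the $\rho_k$-component of multiplication by $xy-yx$ (rather than some other linear combination of the quadratic multiplication maps) is the content of Crawley-Boevey--Holland's Theorem~3.4: it requires choosing the bases of $\Hom_\Gamma(\rho_\ell,\rho_k\otimes V)$ dual to one another under the symplectic form on $V$, and verifying the resulting structure constants reproduce the signs $\epsilon$. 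You correctly flag this as the delicate point, but without it the relations are only killed ``up to an unspecified scalar'', which is not enough. Finally, the injectivity step leans on knowing the graded Hilbert series of the preprojective algebra of an affine ADE quiver; this is true but is itself a nontrivial input, whereas the paper's route gets injectivity for free from the quoted isomorphisms. So the outline is sound, but a complete write-up along your lines would amount to reproving the two theorems the paper cites.
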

 \begin{proof}
Let $S\coloneqq\kk[x,y]* \Gamma$ denote the skew group algebra of $\Gamma$; that is, $S$ is the free $\kk[x,y]$-module with basis the elements of $\Gamma$, where multiplication is determined by the rule $tg\cdot t'g' = t(t')^g gg'$ for $t, t'\in \kk[x,y]$ and $g, g'\in \Gamma$.  Work of Auslander~\cite{Auslander62} establishes a $\kk$-algebra isomorphism 
\[
\Phi\colon S\longrightarrow \End_{R_0}\big(\kk[x,y]\big) 
\]
determined by sending $tg$ to the endomorphism satisfying $t'\mapsto t (t')^g$ for $t\in \kk[x,y]$. For $0\leq k\leq r$, choose an idempotent $f_i\in \kk[\Gamma]$ such that $\kk[\Gamma]f_k\cong \rho_k$. Multiplying an element $tg$ in $S$ on the right (resp.\ left) by $f_k$ corresponds under the isomorphism $\Phi$ to precomposing (resp.\ postcomposing) the appropriate endomorphism $\phi$ with projection from $\kk[x,y]$ to the summand $R_k$ from the decomposition \eqref{eqn:decompkxy} that is singled out by our choice of $f_k$. In particular, for any $0\leq k,\ell\leq r$, the isomorphism $\Phi$ identifies $f_\ell S f_k$ with $\Hom_{R_0}(R_k,R_\ell)$. Following Crawley-Boevey--Holland~\cite[Theorem~3.4]{CBH98}, if we set $f=f_0+\cdots + f_r$, then there is a $\kk$-algebra isomorphism $fSf\cong \Pi_\Gamma$ that sends $f_k$ to $e_k$ for each $0\leq k\leq r$. Therefore, for any $0\leq k,\ell\leq r$, we have
\[
\Hom_{R_0}(R_k,R_\ell)\cong f_\ell S_\Gamma f_k = f_\ell(fSf)f_k \cong e_\ell(\Pi_\Gamma)e_k.
\]
The result follows by taking the direct sum of these vector spaces, one for each pair $0\leq k,\ell\leq r$.
 \end{proof}

  
  \begin{remark}
  The construction of the Morita equivalence between $\Pi_\Gamma$ and $S$ goes back to the work of Reiten--Van den Bergh~\cite[Proposition~2.13]{RVdB89}, but the explicit statement of the isomorphism from Lemma~\ref{lem:preprojEnd} is rather hard to find in the literature (see Buchweitz~\cite{Buchweitz12}). 
  \end{remark}

  \subsection{Nakajima quiver varieties for the McKay graph}
 The combinatorial input for a Nakajima quiver variety is a graph and a pair of dimension vectors. Here, we work with the McKay graph of $\Gamma$ and we fix dimension vectors $\mathbf{v}, \mathbf{w}\in \Rep(\Gamma)$ given by $\mathbf{v}:=n\delta, \mathbf{w}:=\rho_0$ for some $n\geq 1$, where $\delta$ and $\rho_0$ are the regular and trivial representations of $\Gamma$ respectively. 
 
 Armed with this input data, our choice of  $\mathbf{w}=\rho_0$ tells us to extend the McKay graph by adding a framing vertex $\infty$ and an edge joining $\infty$ to the vertex $\rho_0$. The doubled quiver $Q$ of this graph, known as the \emph{framed McKay quiver}, has vertex set $\{\infty\}\cup \Irr(\Gamma)$ and replaces each edge of the framed McKay graph by a pair of arrows, one in each direction. Equivalently, starting from the McKay quiver $Q^\Gamma$, we add a pair of arrows $b, b^*$ where $\tail(b)=\head(b^*)=\infty$ and $\tail(b^*)=\head(b)=\rho_0$ . In addition, our choice of $\mathbf{v}:=n\delta$ indicates that we consider representations of $Q$ of dimension vector $v\coloneqq (1,n\delta)\in \NN\oplus \Rep(\Gamma)$; the components of this vector at the vertices other than the framing vertex are obtained by multiplying the numbers from Figure~\ref{fig:Dynkin} by $n$.
 
\begin{example}
The binary dihedral subgroup $\Gamma$ in $\SL(2,\CC)$ of order 12 has McKay graph equal to the affine Dynkin graph of type $D_5$. The framed McKay quiver is shown in Figure~\ref{fig:framedMcKay} and the dimension vector $v=(1,v_{\rho_0},\dots,v_{\rho_5})$ satisfies $v_{\rho_k}=n$ for $k\in \{0,1,4,5\}$ and $v_{\rho_k}=2n$ for $k\in \{2,3\}$.
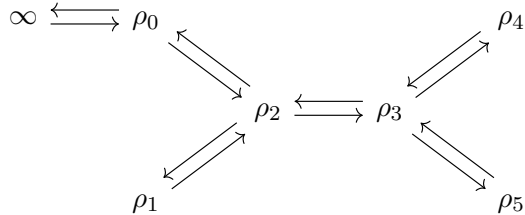
\begin{figure}[!ht]
\[
\begin{tikzcd}
\infty \ar[r,shift right] & \rho_0 \ar[l,shift right] \ar[dr,shift right]& & & \rho_4 \ar[dl,shift right]\\
 &  & \rho_2 \ar[r,shift right] \ar[lu,shift right] \ar[ld,shift right]& \rho_3 \ar[l,shift right] \ar[ur,shift right]\ar[dr,shift right]& \\
 & \rho_1 \ar[ur,shift right] & & & \rho_5\ar[ul,shift right]
 \end{tikzcd}
\]
\caption{The framed McKay quiver for the binary dihedral group of order 12}
\label{fig:framedMcKay}
\end{figure}
\end{example}

 The framed \emph{preprojective algebra} of $\Gamma$ is the noncommutative algebra $\Pi$ obtained as the quotient of the path algebra $\kk Q$ by the ideal of relations
   \begin{equation}
        \label{eqn:Pirelations}
    \left\langle \sum_{a\in Q_1 \: : \: \head(a) = \rho_k} \epsilon(a)aa^* \mid 0\leq k\leq r\right\rangle.
     \end{equation}
     In particular, $\epsilon(b)bb^*$ appears as one of the terms in the generator indexed by $k=0$; this is the only term where a generator from \eqref{eqn:Pirelations} differs from the corresponding generator of \eqref{eqn:PiGammarelations}.

 To construct the quiver varieties in this case, consider the vector space of representations 
 \[
 \Rep(Q,v)= \bigoplus_{a\in Q_1} \Hom_\kk\big(\kk^{v_{\tail(a)}},\kk^{v_{\head(a)}}\big)
 \]
 of $Q$ of dimension vector $v=(1,n\delta)$. Each representation is a tuple
 $(B,i,j)\in \Rep(Q,v)$, where $i\in \kk^n$ and $j\in (\kk^n)^*$ correspond to the arrows $b$ and $b^*$ respectively, and where $B=(B_a)$ is a representation of the McKay quiver $Q^\Gamma$ of dimension vector $n\delta$. The group 
 \[
 G:=\prod_{0\leq k\leq r} \GL(n\dim(\rho_k),\kk)
 \]
 acts on this space of representations, where $g=(g_k)\in G$ acts by
 \[
 g\cdot (B,i,j) = \Big(\big(g_{\head(a)}B_a g_{\tail(a)}^{-1}\big), g_0i, jg_0^{-1}\Big).
\]
 Just as in \eqref{eqn:mu}, consider the map
 $\mu\colon \Rep(Q,v)\to \bigoplus_{0\leq k\leq r} \End(\kk^{n\dim(\rho_k)})$ satisfying
 \begin{equation}
     \label{eqn:moment}
 \mu\big((B,i,j)\big) = \left(ij + \sum_{\head(a)=\rho_0} \epsilon(a)B_aB_{a^*}, \sum_{\head(a)=\rho_1} \epsilon(a)B_aB_{a^*},\dots, \sum_{\head(a)=\rho_r} \epsilon(a)B_aB_{a^*}\right),
 \end{equation}
 where each sum runs over arrows of the McKay quiver $Q^\Gamma$ with head as indicated.

\begin{remarks}
\label{rems:preprojrelations}
\begin{enumerate}
 \item Our ideal \eqref{eqn:Pirelations} defining the framed preprojective relations 
 is generated by cycles corresponding to the components of the map \eqref{eqn:moment}, with one generator for each $0\leq k\leq r$. By defining the map $\mu$ in this way, we have that $(B,i,j)\in\Rep(Q,v)$ lies in $\mu^{-1}(0)$ if and only if the corresponding $\kk Q$-module $V$ of dimension vector $v$ satisfies the preprojective relations \eqref{eqn:Pirelations}, that is, if and only if it is a $\Pi$-module. 
 \item Since 
 $v=(1,n\delta)$ has
 $v_\infty=1$, any point $(B,i,j)\in \mu^{-1}(0)$ satisfies
 \[
 \textrm{Tr}(ji) = \textrm{Tr}(ij) = -\sum_{\head(a)=\rho_0}\epsilon(a) \textrm{Tr}(B_aB_{a^*})= - \sum_{0\leq \head(a)\leq r}\epsilon(a) \textrm{Tr}(B_aB_{a^*}) = 0
 \]
 because $\textrm{Tr}([B_a,B_{a^*}])=0$. Therefore, every such point satisfies $ji = 0$, or equivalently, the corresponding $\Pi$-module $V$ satisfies the additional relation $b^*b=0$. Here, we have chosen not to include the cycle $b^*b$ as a generator of the ideal \eqref{eqn:Pirelations} because we get $ji=0$ for free anyway, but that cycle is in the ideal defining the preprojective relations from \cite[(3.1)]{BC20}. 
\end{enumerate} 
 \end{remarks}

 To define stability conditions, fix the isomorphism $\ZZ^{r+1}\cong G^\vee$ sending $\theta\coloneqq (\theta_0,\dots,\theta_r)\in \ZZ^{r+1}$ to
 \begin{equation}
 \label{eqn:charactergeneral}
 \chi_\theta\colon G\longrightarrow \mathbb{C}^\times, \quad g\mapsto \prod_{0\leq k\leq r} \det(g_k)^{\theta_k}.
 \end{equation}
 Again, some authors use the exponent $-\theta_k$; our choice is explained by Lemma~\ref{lem:ellCdet}.
 Identify $\theta\in \ZZ^{r+1}$ with the map $\theta\colon \ZZ\oplus \Rep(\Gamma)\to\ZZ$ that sends $(v_\infty,\sum_{0\leq k\leq r} v_k\rho_k)$ to the integer
 \[
 - \sum_{0\leq k\leq r} n\dim(\rho_k)\theta_k v_\infty + \sum_{0\leq k\leq r} \theta_k v_k.
 \]
 Notice that for $v=(1,n\delta)$, we have $\theta(v)=0$. For any representation $V$ of dimension vector $v$ and any subrepresentation $V'\subseteq V$, define 
 \[
 \theta(V'):=\theta\big(\dim_\kk V'_\infty, \dim_\kk V'_0, \dots, \dim_\kk V'_r\big).
 \]
 Generalising  Definition~\ref{def:stability},  we say that a representation $V$ is \emph{$\theta$-semistable} (resp.\ \emph{$\theta$-stable}) if every subrepresentation $0\subsetneq V'\subsetneq V$ satisfies $\theta(V')\geq 0$ (resp.\ $\theta(V')>0$). More generally, the notion of $\theta$-stability makes sense for any parameter in the space of of rational stability conditions
  \[
    \Theta\coloneqq \big\{\theta\in \Hom(\ZZ\oplus\Rep(\Gamma),\QQ) \mid \theta(v)=0\big\}\cong \QQ^{r+1}.
    \]
   
  \begin{definition}[\textbf{Nakajima quiver variety for} $Q$] 
 For $\theta\in \Theta$ and the combinatorial data described above, the associated \emph{Nakajima quiver variety} is defined to be
 the GIT quotient
 \[
 \mathfrak{M}_{\theta}(\mathbf{v},\mathbf{w})\coloneqq \mu^{-1}(0)\git_{\chi_\theta} G:= 
\Proj \bigoplus_{k\geq 0} \kk\big[\mu^{-1}(0)\big]_{k\chi_\theta}.
 \]
 Following Bellamy--Schedler~\cite{BS21}, we give $\mathfrak{M}_{\theta}(\mathbf{v},\mathbf{w})$ the reduced scheme structure.
 \end{definition}

\begin{remarks}
\label{rems:Nakrems}
\begin{enumerate}
     \item Building on Crawley-Boevey~\cite{CB03}, the work of Bellamy--Schedler~\cite{BS21} establishes that $\mathfrak{M}_\theta(\mathbf{v},\mathbf{w})$ is a normal and irreducible variety that has  symplectic singularities. In particular, $\mathfrak{M}_\theta(\mathbf{v},\mathbf{w})$ has rational Gorenstein singularities \cite[Proposition~1.3]{Beauville00}.
     \item For $\theta=0$, the corresponding affine quiver variety satisfies 
     \[
     \mathfrak{M}_0(\mathbf{v},\mathbf{w}) \cong \Sym^n(\mathbb{A}^2/\Gamma).
     \]
     Since $\mathbf{v}=n\delta$, this follows by combining the decomposition theorem~\cite[Theorem~1.1]{CB02} and the isomorphism $\mathfrak{M}_0(\delta,\mathbf{w}) \cong \mathbb{A}^2/\Gamma$ that goes back to Kronheimer~\cite{Kronheimer89} over 
     $\mathbb{C}$.
    \item For any $\theta\in \Theta$, variation of GIT induces a projective morphism $   f_\theta\colon \mathfrak{M}_\theta(\mathbf{v},\mathbf{w})\rightarrow  \mathfrak{M}_0(\mathbf{v},\mathbf{w})$. In particular, every such quiver variety is projective over the affine variety $\Sym^n(\mathbb{A}^2/\Gamma)$.
\end{enumerate}
\end{remarks}

\begin{example}
\label{exa:nGamma}
 As we remarked in the introduction, the fact that the space 
\[
 n\Gamma\text{-}\Hilb(\mathbb{A}^2) = \big\{ \Gamma\text{-invariant ideals }I\subset \kk[x,y] \mid \kk[x,y]/I\cong_\Gamma \kk[\Gamma]^{\oplus n}\big\}
 \]
  is isomorphic as a variety to $\mathfrak{M}_\theta(\mathbf{v},\mathbf{w})$ for any stability parameter in the open cone
  \[
 C_+\coloneqq \big\{\theta\in \Theta \mid \theta(\rho_k)>0 \text{ for }0\leq k\leq r\big\}
 \]
 was observed by Varagnolo--Vasserot~\cite{VV99} and Wang~\cite{Wang02}. Indeed, if we write $S=\kk[x,y]*\Gamma$ for the skew group algebra, then each  quotient $\kk[x,y]/I$ is a cyclic $S$-module of dimension vector $n\delta$ which corresponds, via the Morita equivalence $fSf\cong \Pi_\Gamma$ described in the proof of Lemma~\ref{lem:preprojEnd}, to a cyclic $\Pi_\Gamma$-module of dimension vector $n\delta$. Thus,  $n\Gamma\text{-}\Hilb(\mathbb{A}^2)$ parametrises $\Pi_\Gamma$-modules $V=\bigoplus_{0\leq k\leq r} V_k$ of dimension vector $n\delta$, each with a $\kk$-linear map $i\colon \kk\to V_0$ such that $V$ is generated by $i(1)$. Just as in the proof of Theorem~\ref{thm:Fogarty}, every such cyclic $\Pi_\Gamma$ module may be regarded as a $\theta$-stable $\Pi$-module $(B,i,j)$ of dimension vector $(1,n\delta)$ satisfying $j=0$, so $n\Gamma\text{-}\Hilb(\mathbb{A}^2)$ is in fact isomorphic as a variety to $\mathfrak{M}_\theta(\mathbf{v},\mathbf{w})$. Note that $\theta$-stable $\Pi$-modules are often called `$\infty$-generated'.
 \end{example}

\begin{remark}
\label{rem:algebraA}
 We highlight one important point from Example~\ref{exa:nGamma}. The fact that $j=0$ for each $\theta$-stable $\Pi$-module of dimension $(1,n\delta)$ means that we may regard $\mathfrak{M}_\theta(\mathbf{v},\mathbf{w})$ as the fine moduli space of $\theta$-stable modules over the quotient algebra $A:=\Pi/(b^*)$ that have dimension vector $(1,n\delta)$, see \cite[Proposition~3.1]{CGGS19}. Similarly, the preprojective algebra of the (unframed) McKay quiver $\Pi_\Gamma$ is a quotient of $A$, so there are $\kk$-algebra homomorphisms
  \[
\begin{tikzcd}
\Pi \ar[rr,"/(b^*)"] & & A \ar[rr,"/(b)"] & & \Pi_\Gamma.
\end{tikzcd}
\]
In fact, $\Pi_\Gamma$ is also a subalgebra of $A$, namely $\Pi_\Gamma\cong \bigoplus_{0\leq k,\ell\leq r} e_\ell A e_k$.
\end{remark}

 \subsection{Tautological bundles}
 As is standard in GIT, the rational vector space $\Theta$ of GIT stability parameters admits a wall-and-chamber decomposition, where $\theta, \theta'\in \Theta$ lie in the relative interior of the same cone if and only if the notions of $\theta$-semistability and $\theta'$-semistability coincide. The interiors of the top-dimensional cones in $\Theta$ are \emph{chambers}, while the codimension-one faces of the closure of each chamber are called \emph{walls}. We say that $\theta\in \Theta$ is \emph{generic} if it lies in some chamber. Note that our definition of generic differs from that of Nakajima~\cite{Nakajima94}, see Remark~\ref{rem:generic}(1).

Since $v=(1,n\delta)$ is indivisible,  King~\cite[Proposition~5.3]{King94} shows that $\mathfrak{M}_\theta(\mathbf{v},\mathbf{w})$ is the fine moduli space of $\theta$-stable $\Pi$-modules of dimension vector $v$ whenever $\theta\in \Theta$ is generic.

 \begin{definition}[\textbf{Tautological bundle}]
 The \emph{tautological bundle} on $\mathfrak{M}_{\theta}(\mathbf{v},\mathbf{w})$ is the vector bundle 
 \[
 \mathcal{R}\coloneqq \mathcal{O}_{\mathfrak{M}}\oplus \bigoplus_{0\leq k\leq r} \mathcal{R}_k
 \]
 whose fibre over any closed point of $\mathfrak{M}_\theta(\mathbf{v},\mathbf{w})$ is the corresponding $\theta$-stable  $\Pi$-module; note that $\rank(\mathcal{R}_k) = n\dim(\rho_k)$ for $0\leq k\leq r$. The \emph{tautological $\kk$-algebra homomorphism} $\Pi\to \End(\mathcal{R})$ encodes the maps between summands of $\mathcal{R}$ determined by arrows in $Q$.
 \end{definition}
 

 \begin{remark}
 As before, $\mathfrak{M}_{\theta}(\mathbf{v},\mathbf{w})$ is universal, in the sense that any flat family of $\theta$-stable $\Pi$-modules of dimension $(1,n)$ over a scheme $X$ is obtained as the pullback of the tautological bundle $\mathcal{R}$ via a unique morphism $X\to \mathfrak{M}_{\theta}(\mathbf{v},\mathbf{w})$.
 \end{remark}
 
 Let $C\subseteq \Theta$ be a GIT chamber. For $\theta\in C$, write $\pi\colon \mu^{-1}(0)^{\chi_\theta\textrm{-s}}\to \mathfrak{M}_\theta(\mathbf{v},\mathbf{w})$ for the geometric quotient of the locus of $\chi_\theta$-stable points of $\mu^{-1}(0)$ by the action of $G$. For any $\chi\in G^\vee$, consider the $G$-equivariant line bundle $\chi\otimes \mathcal{O}$ given by the trivial line bundle on $\mu^{-1}(0)^{\chi_\theta\textrm{-s}}$ equipped with the action of $G$ on each fibre given by $\chi$. Our sign convention on characters $\chi_\theta$ from \eqref{eqn:charactergeneral} is chosen so that we don't need to introduce minus signs in the statement of the following result.

 \begin{lemma}
 \label{lem:ellCdet}
For any $\chi_\eta\in G^\vee$, the $G$-equivariant line bundle $\chi_\eta\otimes \mathcal{O}$ on $\mu^{-1}(0)^{\chi_\theta\textrm{-s}}$ descends via $\pi$ to the line bundle 
\[
L_C(\eta)=\bigotimes _{0\leq k\leq r} \det(\mathcal{R}_k)^{\otimes \eta_k}
\]
on $\mathfrak{M}_\theta(\mathbf{v},\mathbf{w})$. In particular, the polarising ample line bundle on $\mathfrak{M}_\theta(\mathbf{v},\mathbf{w})$ is $L_C(\theta)$.
 \end{lemma}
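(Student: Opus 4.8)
The plan is to realise both bundles as descents of the same $G$-equivariant line bundle on the stable locus $U\coloneqq \mu^{-1}(0)^{\chi_\theta\textrm{-s}}$, exploiting the fact that $\pi$ is a principal $G$-bundle. Indeed, since $\theta$ lies in the chamber $C$ and $v=(1,n\delta)$ is indivisible, every $\chi_\theta$-semistable point is $\chi_\theta$-stable, and the argument from the proof of Theorem~\ref{thm:Fogarty} shows that $G$ acts freely on $U$: any $g\in G$ fixing a stable point, together with $\id_{V_\infty}$, is an automorphism of the corresponding $\Pi$-module, so by stability it is a global scalar, which forces $g=\id$ since $G$ fixes the framing space $V_\infty=\kk$. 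Thus $\pi\colon U\to \mathfrak{M}_\theta(\mathbf{v},\mathbf{w})$ is a principal $G$-bundle, pullback $\pi^*$ is a monoidal equivalence from vector bundles on $\mathfrak{M}_\theta(\mathbf{v},\mathbf{w})$ to $G$-equivariant vector bundles on $U$, and under it the associated-bundle construction sends a character $\chi\in G^\vee$ to the descent of $\chi\otimes\mathcal{O}$. It therefore suffices to compute $\pi^*L_C(\eta)$ as a $G$-equivariant line bundle and match it with $\chi_\eta\otimes\mathcal{O}$.

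First I would identify each tautological summand $\mathcal{R}_k$ as the descent of the trivial bundle $\underline{V_k}=U\times\kk^{n\dim(\rho_k)}$ on $U$, where $G$ acts through the standard representation of the factor $\GL(n\dim(\rho_k),\kk)$ and trivially through the remaining factors; this is precisely the defining property of the tautological bundle, whose fibre over a point is the corresponding $\Pi$-module. Since pullback commutes with taking determinants, $\pi^*\det(\mathcal{R}_k)=\det(\underline{V_k})$ is the trivial line bundle on $U$ carrying the $G$-action $g\mapsto\det(g_k)$; equivalently, $\det(\mathcal{R}_k)$ is the descent of $\chi_{e_k}\otimes\mathcal{O}$, where $e_k\in\ZZ^{r+1}$ is the $k$-th coordinate vector and $\chi_{e_k}$ is the character from \eqref{eqn:charactergeneral}.

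Tensoring up, monoidality of $\pi^*$ gives $\pi^*L_C(\eta)=\bigotimes_{0\leq k\leq r}(\pi^*\det(\mathcal{R}_k))^{\otimes\eta_k}$, which is the trivial line bundle on $U$ equipped with the $G$-action $g\mapsto\prod_{0\leq k\leq r}\det(g_k)^{\eta_k}=\chi_\eta(g)$. Hence $\pi^*L_C(\eta)\cong\chi_\eta\otimes\mathcal{O}$ as $G$-equivariant line bundles, and by the descent equivalence $L_C(\eta)$ is exactly the bundle to which $\chi_\eta\otimes\mathcal{O}$ descends, as claimed. For the final assertion I would recall that the polarising ample bundle on the GIT quotient $\Proj\bigoplus_{k\geq0}\kk[\mu^{-1}(0)]_{k\chi_\theta}$ is $\mathcal{O}(1)$, whose degree-one sections are the $\chi_\theta$-semi-invariant functions, that is, the $G$-equivariant sections of $\chi_\theta\otimes\mathcal{O}$ over $U$; thus $\mathcal{O}(1)$ is the descent of $\chi_\theta\otimes\mathcal{O}$, which by the first part equals $L_C(\theta)$.

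The only substantive input is the freeness of the $G$-action on $U$, which underpins the descent formalism and follows as in Theorem~\ref{thm:Fogarty}; beyond that, the main point requiring care is the bookkeeping of signs, namely that the convention \eqref{eqn:charactergeneral} makes $\chi_\eta$ pair positively, rather than with a minus sign, with $\det(\mathcal{R}_k)^{\otimes\eta_k}$. This is precisely what that convention is arranged to achieve, so that no minus signs intrude and $L_C(\theta)$ is genuinely the ample polarisation.
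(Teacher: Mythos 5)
Your proposal is correct and follows essentially the same route as the paper: identify $\mathcal{R}_k$ as the descent of the trivial bundle carrying the standard action of the $k$-th factor of $G$, take determinants to see that $\det(\mathcal{R}_k)$ is the descent of $\chi_{e_k}\otimes\mathcal{O}$, extend by linearity of $\eta\mapsto\chi_\eta$, and invoke uniqueness of descent. The extra detail you supply (freeness of the $G$-action making $\pi$ a principal bundle, and the identification of $\mathcal{O}(1)$ via semi-invariant sections) is implicit in the paper's argument rather than a different approach.
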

 \begin{proof}
 For $0\leq k\leq r$, define a representation $\rho_k\colon G\to \GL\big(n\dim(\rho_k)\big)$ and a character $\chi_k\in G^\vee$ by setting $\rho_k(g) = g_k$ and $\chi_k(g) = \det(g_k)$ for $g=(g_k)\in G$. The $G$-equivariant vector bundle $\rho_k\otimes \mathcal{O}$ on $\mu^{-1}(0)^{\chi_\theta\textrm{-s}}$ is the bundle $\mathcal{O}^{\oplus n\dim(\rho_k)}$ equipped with the action of $G$ on each fibre given by $\rho_k$, so $\det(\rho_k\otimes \mathcal{O})$ is isomorphic to $\mathcal{O}$ equipped with the $G$-action given by $\chi_k$. This gives an isomorphism $\det(\rho_k\otimes \mathcal{O})\cong \chi_k\otimes \mathcal{O}$ of $G$-equivariant line bundles. Since $\rho_k\otimes \mathcal{O}$ descends to $\mathcal{R}_k$ on $\mathcal{M}_\theta(\mathbf{v},\mathbf{w})$, we have the following isomorphisms 
 \[
 \chi_k\otimes \mathcal{O}\cong \det(\rho_k\otimes \mathcal{O})\cong \det\big(\pi^*(\mathcal{R}_k)\big)\cong \pi^*\big(\det(\mathcal{R}_k)\big)
 \]
 of $G$-equivariant line bundles. Uniqueness of descent shows that $\chi_k\otimes \mathcal{O}$ descends to $\det(\mathcal{R}_k)$. The first statement follows by $\ZZ$-linearity, while the second follows because $\chi_\theta\otimes \mathcal{O}$ descends to the polarising ample bundle on $\mathfrak{M}_\theta(\mathbf{v},\mathbf{w})$.
 \end{proof}

\section{Birational geometry of Nakajima quiver varieties}
We now review briefly the joint work of the author and Gwyn Bellamy~\cite{BC20} that describes explicitly the GIT wall-and-chamber decomposition of $\Theta$, leading to 
a complete understanding of the birational geometry
of the quiver varieties $\mathfrak{M}_\theta(\mathbf{v},\mathbf{w})$ for all $\theta\in \Theta$. In this section only, it is convenient to identify $\Theta = \Hom(\Rep(\Gamma),\QQ)$ by projecting away from the component indexed by $\infty$.

\subsection{The GIT chamber decomposition}
 For any $\gamma\in \Rep(\Gamma)$ we write $\gamma^\perp=\{\theta \in \Theta \mid \theta(\gamma)=0\}$ for the dual hyperplane. A simple example of the main results is illuminating.

 
\begin{example}\textbf{[Type $A_1$ with $n=2$]}
\label{exa:A1n2}
The group $\Gamma$ gives $r=1$ and $\delta=\rho_0+\rho_1$. Choose $\{\rho_0, \delta\}$ as basis for $\Rep(\Gamma)$.
\begin{figure}[!ht]
			\begin{tikzpicture}[scale=0.5]
				\shade[left color=red,right color=white] (0,0) -- (6.7,0) -- (4.7,4.7)-- cycle;
			\shade[bottom color=green, top color=white] (0,0) -- (4.7,4.7) -- (0,4.7)-- cycle;
			\draw[thick,>=stealth] (-7,0) -- (7,0) node[right] {$\rho_1^\perp$};
			\draw[thick,>=stealth] (-5,-5) -- (5,5) node[right] {$\rho_0^\perp=(\delta-\rho_1)^\perp$};
		
			\draw (5,2) node {$C_+$};
			\draw (2,4) node {$C_-$};
    (0,0) -- (5,0) arc (0:90:5) -- cycle;
			\draw[thick,>=stealth] (0,-5) -- (0,5) node[above] {$\delta^\perp$};
			\draw[thick,>=stealth] (-5,5) -- (5,-5) node[right] {$(\delta+\rho_1)^\perp$};
			\end{tikzpicture}
			\caption{GIT chamber decomposition of $\Theta_v$ for type $A_1$ and $n=2$}
			\label{fig:GITdecomp}
\end{figure}
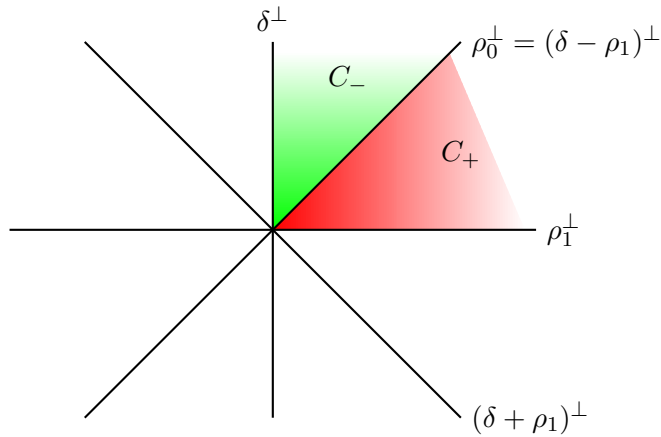
 The GIT chamber decomposition of $\Theta$ is shown in Figure~\ref{fig:GITdecomp}: the red chamber $C_+$ is that from Example~\ref{exa:nGamma} for which $\mathfrak{M}_\theta(\mathbf{v},\mathbf{w})\cong 2\Gamma\text{-}\Hilb(\mathbb{A}^2)$; the green chamber $C_-$ defines an alternative projective crepant resolution of $\Sym^2(\mathbb{A}^2/\Gamma)$, namely $\mathfrak{M}_{\theta'}(\mathbf{v},\mathbf{w})\cong \Hilb^{[2]}(S)$ for $\theta'\in C_-$, where $S\to \mathbb{A}^2/\Gamma$ is the minimal resolution. Notice that every chamber in $\Theta$ is the image of either $C_+$ or $C_-$ under the action of the group $W\cong \ZZ/2 \times \ZZ/2$ generated by the reflections in the supporting hyperplanes $\delta^\perp$ and $\rho_1^\perp$ of the closed cone $F\coloneqq \big\{\theta\in \Theta \mid \theta(\delta)\geq 0, \theta(\rho_1)\geq 0\big\}$.
  \end{example}
 
 To describe the GIT decomposition in general, recall that the ADE root system of finite type $\Phi\subset \Phi_{\textrm{aff}}$ is the intersection of the affine root system $\Phi_{\textrm{aff}}$ of type ADE with the $\ZZ$-span of $\{\rho_1, \dots, \rho_r\}$ in $\Rep(\Gamma)$. Let $\Phi^+$ denote the set of positive roots. The following result is \cite[Theorem~1.1]{BC20}. 
 
 \begin{theorem}[\textbf{GIT chamber decomposition}]
 \label{thm:GITdecomp}
 For $n>1$, the GIT wall-and-chamber decomposition is the partition of $\Theta$ determined by the hyperplane arrangement
 \[
 \mathcal{A}:=\big\{\delta^\perp, (m\delta\pm\alpha)^\perp \mid 0\leq m < n\text{ and }\alpha\in \Phi^+\big\}.
 \]
 Moreover, the projective morphism $f_\theta\colon \mathfrak{M}_\theta(\mathbf{v},\mathbf{w})\to \mathfrak{M}_0(\mathbf{v},\mathbf{w})\cong \Sym^n(\mathbb{A}^2/\Gamma)$ is a symplectic (hence crepant) resolution if and only if $\theta$ does not lie in any of these hyperplanes.
 \end{theorem}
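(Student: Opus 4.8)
The plan is to reduce the statement to a root-theoretic calculation by invoking the standard dictionary between the singularities of a Nakajima quiver variety and its strictly semistable locus, and then to enumerate the relevant positive roots for the dimension vector $v=(1,n\delta)$ of the framed McKay quiver. First I would recall the general principle, going back to Nakajima~\cite{Nakajima94,Nakajima98} and made precise for these varieties by Crawley-Boevey~\cite{CB03} and Bellamy--Schedler~\cite{BS21}: the projective morphism $f_\theta$ is a symplectic (equivalently crepant) resolution exactly when $\theta$ is \emph{generic}, in the sense that every $\theta$-semistable representation of dimension vector $v$ is $\theta$-stable and $G$ acts freely on the $\chi_\theta$-stable locus. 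Indeed, in that case the argument of Theorem~\ref{thm:Fogarty} shows that $\mathfrak{M}_\theta(\mathbf{v},\mathbf{w})$ is smooth and holomorphic symplectic, whereas if $\theta$ lies on a wall there are strictly $\theta$-semistable points and $\mathfrak{M}_\theta(\mathbf{v},\mathbf{w})$ acquires singularities. Thus the walls of the arrangement are precisely the loci where strictly semistable representations appear, and the two assertions of the theorem become a single problem: identify those loci.

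Second, I would translate ``strictly semistable'' into a decomposition of the dimension vector. At a generic point of a wall $H$, a strictly $\theta$-semistable module is $S$-equivalent to $V'\oplus V''$ with $V',V''$ both $\theta$-stable and $\theta(V')=\theta(V'')=0$. Writing $\beta=\dim V'$ and using $\theta(v)=0$, the wall is the hyperplane $\beta^\perp$, and both $\beta$ and $v-\beta$ must support $\theta$-stable $\Pi$-modules; by Crawley-Boevey's description of the dimension vectors of stable modules over a preprojective algebra, these are positive roots of the framed quiver with $0<\beta<v$. Since $v_\infty=1$, exactly one of $\beta,v-\beta$ has $\infty$-component equal to $1$, so after swapping I may assume $\beta_\infty=0$; then $\beta$ is a positive root of the affine ADE root system $\Phi_{\textrm{aff}}$ with $0<\beta\le n\delta$. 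The imaginary roots $\beta=m\delta$ all produce the single hyperplane $\delta^\perp$, while the real roots are the $m\delta\pm\alpha$ with $\alpha\in\Phi^+$.

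The crux is then the realizability constraint that pins down the range of $m$, and this is the step I expect to be the main obstacle. It is not enough that $\beta\le n\delta$: the complement $v-\beta$ must itself support a $\theta$-stable module, hence be a positive root, which forces $p(v-\beta)\ge 0$ for the Tits form $p(\gamma)=1-\tfrac12(\gamma,\gamma)$ of the framed quiver. A direct computation using $(\delta,\delta)=0$ and $(\alpha,\alpha)=2$ gives $p\big(v-(m\delta\pm\alpha)\big)=n-m-1$, so $v-\beta$ is a root only when $0\le m<n$; in particular the a priori admissible root $\beta=n\delta-\alpha$ is excluded, since $v-\beta=(1,\alpha)$ has $p=-1<0$. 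This yields exactly the hyperplanes $(m\delta\pm\alpha)^\perp$ with $0\le m<n$ of the arrangement $\mathcal{A}$, together with $\delta^\perp$ from the imaginary roots, and a parallel check shows $p\big((1,(n-m)\delta)\big)\ge 0$ throughout this range so that $\delta^\perp$ is genuinely attained.

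Conversely, for each such $\beta$ one must verify that $\beta$ and $v-\beta$ genuinely lie in the set $\Sigma_\theta$ of $\theta$-stable dimension vectors, so that the corresponding stratum is nonempty and $H$ really is a wall rather than a spurious hyperplane; combined with the observation that decompositions into three or more $\theta$-stable summands only meet $\mathcal{A}$ along intersections of the hyperplanes already found, this pins the wall-and-chamber structure down to $\mathcal{A}$ and, by the first step, establishes the resolution criterion. The hard technical input throughout is Crawley-Boevey's characterisation of $\Sigma_\theta$ and the canonical decomposition of $v$, which is exactly what governs which root hyperplanes can support strictly semistable representations; everything else is the positivity bookkeeping for the form $p$ carried out above.
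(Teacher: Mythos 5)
Your proposal is correct and follows essentially the same route as the paper, which itself only sketches the argument: both reduce the problem to the Bellamy--Schedler/Crawley-Boevey criterion that smoothness of $\mathfrak{M}_\theta(\mathbf{v},\mathbf{w})$ is governed by the (canonical) decompositions of $v=(1,n\delta)$ into dimension vectors supporting $\theta$-stable $\Pi$-modules, and then carry out the explicit root-theoretic calculation that produces the hyperplanes in $\mathcal{A}$. Your Tits-form bookkeeping $p\big(v-(m\delta\pm\alpha)\big)=n-m-1$ is exactly the computation that pins down the range $0\leq m<n$ (and correctly excludes vectors such as $2\delta-\rho_1$ in Remark~\ref{rem:generic}(1)), so the only caveat is the one you already flag: one must invoke the full characterisation of $\Sigma_\theta$ from \cite{CB02,BS21}, not merely the root condition, to confirm each hyperplane is genuinely attained.
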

\begin{proof}[Sketch of the proof]
 Following Bellamy--Schedler~\cite[Corollary~1.17]{BS21}, non-singularity of $\mathfrak{M}_\theta(\mathbf{v},\mathbf{w})$ can be characterised in terms of the canonical decomposition of $v=(1,n\delta)$ with respect to $\theta$. An explicit calculation of decompositions of $v$ leads to the vectors defining the hyperplanes in $\mathcal{A}$. 
 \end{proof}
 
 \begin{remarks}
 \label{rem:generic}
 \begin{enumerate}
     \item Nakajima~\cite{Nakajima94} defines $\theta\in \Theta$ to be generic if it lies in the complement of all hyperplanes $\gamma^\perp$ associated to vectors $\gamma\in \Rep(\Gamma)$ satisfying $0 < \gamma \leq \mathbf{v}$ and $\gamma^t C_\Gamma\gamma\leq 2$, where $C_\Gamma$ is the Cartan matrix. Every such $\theta$ is generic in our sense \cite[Theorem~5.2.2(ii)]{Ginzburg12}, but the converse is false. For example, the vector $\gamma :=2\rho_0+\rho_1=2\delta-\rho_1$ in Example~\ref{exa:A1n2} satisfies the above inequalities and yet it does not define a hyperplane from $\mathcal{A}$.
     \item Notice that the supporting hyperplanes of the closed simplicial cone \begin{equation}
         \label{eqn:F}
 F\coloneqq \big\{\theta\in \Theta \mid \theta(\delta)\geq 0 \text{ and }\theta(\rho_k)\geq 0 \text{ for }1\leq k\leq r\big\}
  \end{equation}
 arise in the hyperplane arrangement from Theorem~\ref{thm:GITdecomp}, so $F$ is the union of the closures of a collection of GIT chambers including $C_+$ from Example~\ref{exa:nGamma}, as well as the chamber
 \begin{equation}
 \label{eqn:HilbnS}
 C_- = \big\{\theta\in \Theta \mid \theta(\delta)\geq 0 \text{ and } \theta(m\delta - \alpha)\geq 0 \text{ for }0\leq m<n \text{ and }\alpha\in \Phi^+\big\}
 \end{equation}
 that satisfies $\mathfrak{M}_{\theta'}(\mathbf{v},\mathbf{w})\cong \Hilb^{[n]}(S)$ for $\theta'\in C_-$, where $S\to \mathbb{A}^2/\Gamma$ is the unique minimal resolution (see Kuznetsov~\cite{Kuznetsov07}). In fact, $F$ is generated by the vectors from $C_+\cup C_-$.
 \end{enumerate}
 
 \end{remarks}
 
 \begin{definition}[\textbf{Namikawa Weyl group}]
 The \emph{Namikawa Weyl group} $W$ is the group generated by reflections in the supporting hyperplanes $\delta^\perp, \rho_1^\perp, \dots, \rho_r^\perp$ of the cone $F$.
 \end{definition}

  It's not hard to show that $W\cong \ZZ/2\times W_\Gamma$ where $W_\Gamma$ is the Weyl group of $\Phi$, and moreover, for every GIT chamber $C$ in $\Theta$, there exists a unique $w\in W$ such that $w(C)\subseteq F$  \cite[Proposition~2.2]{BC20}. The importance of this group from a geometric point of view is the existence of isomorphisms
 \begin{equation}
     \label{eqn:Weylgroup}
 \mathfrak{M}_\theta(\mathbf{v},\mathbf{w})\cong\mathfrak{M}_{w(\theta)}(\mathbf{v},\mathbf{w}) \quad\text{ for all }\theta\in \Theta, w\in W;
 \end{equation}
 see \cite[Corollary~1.8]{BC20}. As a result, it suffices to understand the Nakajima quiver varieties defined by stability parameters $\theta\in F$.


 \subsection{Birational geometry of Nakajima quiver varieties}
 The central role played by the cone $F$ from \eqref{eqn:F} becomes clear only when we study its relation to the movable cone. First, let's fix one chamber, say $C_+$ from Example~\ref{exa:nGamma} (we chose $C_-$ in \cite{BC20}), and in doing so we fix once and for all one projective crepant resolution 
 \[
 X:=\mathfrak{M}_\theta(\mathbf{v},\mathbf{w})\cong n\Gamma\text{-}\Hilb(\mathbb{A}^2)\quad\text{for }\theta\in C_+
 \]
 of $Y:=\Sym^n(\mathbb{A}^2/\Gamma)$. Let $N^1(X/Y)$ denote the rational vector space of line bundles up to numerical equivalence (with respect to proper curves in $X$). Recall that $L\in N^1(X/Y)$ is said to be \emph{movable} if the stable base locus of $L$ over $Y$ has codimension at least two in $X$, in which case the rational map
\[
\varphi_{\vert L\vert}\colon X\dashrightarrow X(L)=\Proj \bigoplus_{k\geq 0} H^0(X,L^{\otimes k})\subseteq \vert L\vert
\]
 over $Y$ is an isomorphism in codimension-one. The relative \emph{movable cone} $\Mov(X/Y)$ is the closure in $N^1(X/Y)$ of the cone generated by all movable divisor classes over $Y$. Since the ample cone $\Amp(X/Y)$ is the interior of the nef cone $\Nef(X/Y)$ by Kleiman's relative ampleness criterion, it follows that the movable cone contains the nef cone.

 \begin{definition}[\textbf{The linearisation map}]
 \label{def:linearisation}
 Let $C\subseteq \Theta$ be a GIT chamber. The \emph{linearisation map} is the $\QQ$-linear map
 \[
 L_C\colon \Theta\longrightarrow N^1(X/Y), \quad \eta\mapsto \bigotimes _{0\leq k\leq r} \det(\mathcal{R}_k)^{\otimes \eta_k}.
 \]
\end{definition}

 For $\theta\in C$, the line bundle $L_C(\theta)$ is the polarising ample bundle on $\mathfrak{M}_\theta(\mathbf{v},\mathbf{w})$ by Lemma~\ref{lem:ellCdet}. Since varying the stability parameter $\theta$ from $C$ into any wall of $C$ induces a contraction from the non-singular quiver variety $\mathfrak{M}_\theta(\mathbf{v},\mathbf{w})$ to a singular quiver variety (assuming $n>1$) by Theorem~\ref{thm:GITdecomp}, it follows that $L_C$ restricts to give an isomorphism
 \begin{equation}
    \label{eqn:ample}
    {L_C}\vert_{\overline{C}}\colon \overline{C}\longrightarrow \Nef(\mathfrak{M}_\theta\big(\mathbf{v},\mathbf{w})\big)
\end{equation}
 of finitely generated, rational polyhedral cones that identifies the chamber $C$ with the ample cone of $\mathfrak{M}_\theta(\mathbf{v},\mathbf{w})$ for $\theta\in C$ \cite[Proposition~6.1]{BC20}. 
 
 This GIT interpretation of the ample cone can be strengthened considerably to a description of the entire movable cone.  The main result of \cite[Theorem~1.2]{BC20} can be stated as follows:
 
 \begin{theorem}[\textbf{The movable cone of $\mathfrak{M}_\theta(\mathbf{v},\mathbf{w})$}]
 \label{thm:BC20main} 
 Let $n > 1$ and let $\Gamma\subset \SL(2,\kk)$ be a finite subgroup. There is an isomorphism $L_F\colon \Theta\rightarrow N^1(X/Y)$ of rational vector spaces such that:
 \begin{enumerate}
     \item[\one] $L_F=L_C$ for each chamber $C\subseteq F$; and 
     \item[\two] $L_F$ identifies the GIT wall-and-chamber structure of $F$ with the decomposition of the movable cone of $\mathfrak{M}_\theta(\mathbf{v},\mathbf{w})$ over $\Sym^n(\mathbb{A}^2/\Gamma)$ into Mori chambers.
\end{enumerate}
 Thus, for each $\theta\in F$, the quiver variety $\mathfrak{M}_\theta(\mathbf{v},\mathbf{w})$ is isomorphic to the birational model $X(L_F(\theta))$; and conversely, every partial crepant resolution of $\Sym^n(\mathbb{A}^2/\Gamma)$ arises as $\mathfrak{M}_\theta(\mathbf{v},\mathbf{w})$  for some $\theta\in F$.
 \end{theorem}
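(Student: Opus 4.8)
The whole argument rests on one structural input about the arrangement $\mathcal{A}$ from Theorem~\ref{thm:GITdecomp}, and establishing it is where I expect the real work to lie. Namely, I would first classify the walls of $\mathcal{A}$ as either \emph{divisorial} or \emph{small}: crossing a wall should induce a divisorial contraction of $\mathfrak{M}_\theta(\mathbf{v},\mathbf{w})$ precisely when the wall is a $W$-translate of one of the supporting hyperplanes $\delta^\perp,\rho_1^\perp,\dots,\rho_r^\perp$ of the cone $F$ from \eqref{eqn:F}, and a small (flopping) modification for every wall lying in the interior of a $W$-translate of $F$. I would prove this using the wall analysis of Bellamy--Schedler~\cite{BS21}, identifying the divisor contracted across $\delta^\perp$ with a Hilbert--Chow-type divisor and the divisor contracted across $\rho_k^\perp$ with the one modelled on the $k$-th exceptional curve of the minimal resolution $S\to\mathbb{A}^2/\Gamma$; the Weyl-group symmetry \eqref{eqn:Weylgroup}, which realises $W\cong\ZZ/2\times W_\Gamma$ geometrically and has $F$ as a fundamental domain, then reduces the general case to these.

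Granting the wall classification, I would build $L_F$ by gluing the chamber-wise linearisation maps. Fix the reference model $X=\mathfrak{M}_\theta(\mathbf{v},\mathbf{w})$ for $\theta\in C_+$. For adjacent chambers $C,C'\subseteq F$, the shared wall is interior to $F$, hence small, so variation of GIT gives a birational map $\mathfrak{M}_\theta(\mathbf{v},\mathbf{w})\dashrightarrow\mathfrak{M}_{\theta'}(\mathbf{v},\mathbf{w})$ that is an isomorphism in codimension one over $Y=\Sym^n(\mathbb{A}^2/\Gamma)$. This canonically identifies $N^1(\mathfrak{M}_{\theta'}/Y)$ with $N^1(X/Y)$, and under it the tautological determinants $\det(\mathcal{R}_k)$ correspond, because the two tautological families agree over the common open locus. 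Hence $L_{C'}=L_C$; as any two chambers of $F$ are joined by a chain of such crossings, all the $L_C$ coincide, which defines $L_F$ and yields part~\one. By \eqref{eqn:ample} the restriction of $L_F$ to the full-dimensional cone $\overline{C_+}$ is an isomorphism onto a nef cone, so $L_F$ is injective; since $\dim\Theta=\dim N^1(X/Y)$ it is an isomorphism of vector spaces.

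For part~\two I would identify the images. Again by \eqref{eqn:ample}, each closed chamber $\overline{C}\subseteq F$ is carried by $L_F$ isomorphically onto the nef cone $\Nef(\mathfrak{M}_\theta(\mathbf{v},\mathbf{w}))$ for $\theta\in C$, a full-dimensional rational polyhedral cone with interior disjoint from those of the other chambers. Each such cone lies in $\Mov(\mathfrak{M}_\theta/Y)=\Mov(X/Y)$, so $L_F(F)\subseteq\Mov(X/Y)$. For the reverse inclusion I would use the wall classification: the supporting hyperplanes of $F$ are exactly the divisorial walls, so $L_F$ sends the boundary of $F$ into the boundary of the movable cone, while the interior walls of $F$ go to flopping walls that merely subdivide $\Mov(X/Y)$. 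Comparing the two cone decompositions forces $L_F(F)=\Mov(X/Y)$, and matches the GIT chambers of $F$ bijectively with the Mori chambers of $\Mov(X/Y)$. The final assertions follow: for $\theta\in F$ one has $\mathfrak{M}_\theta(\mathbf{v},\mathbf{w})\cong X(L_F(\theta))$ by the standard Mori-chamber/VGIT correspondence, and since every partial crepant resolution of $Y$ is the model attached to some class in $\Mov(X/Y)=L_F(F)$, each arises as $\mathfrak{M}_\theta(\mathbf{v},\mathbf{w})$ for a suitable $\theta\in F$.

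The main obstacle is the wall classification of the first paragraph: showing that precisely the supporting hyperplanes of $F$ are divisorial while every interior wall is small. This is the step that pins $F$ down as the preimage of the movable cone rather than of some larger cone, and it is where the explicit geometry of $\mathfrak{M}_\theta(\mathbf{v},\mathbf{w})$ --- the contracted loci and the Bellamy--Schedler characterisation of non-singularity --- must be brought to bear; the Weyl-symmetry reduction to the walls bounding the fundamental domain $F$ is what makes it tractable.
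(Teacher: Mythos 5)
Your proposal follows essentially the same route as the paper's own sketch: classify the wall crossings interior to $F$ as small (Mukai flops, isomorphisms in codimension one, so the tautological determinants and hence the maps $L_C$ glue to a single $L_F$) and the crossings into the boundary of $F$ as divisorial contractions, then combine this with the ample-cone identification \eqref{eqn:ample} to match the GIT chambers of $F$ with the Mori chambers of the movable cone. The only notable difference is one of attribution for the crux you correctly isolate: the paper derives the wall classification from the \'{e}tale-local Ext-quiver description of the contractions in \cite[Theorem~3.2]{BC20} (with the interior walls identified as type~$A$ Mukai flops in \cite[Theorem~5.5]{BC20}) rather than directly from the singularity criterion of \cite{BS21}.
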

 \begin{proof}[Sketch of the proof]
 We analyse how the quiver variety $\mathfrak{M}_\theta(\mathbf{v},\mathbf{w})$ and its tautological bundles $\mathcal{R}_k$ (for $0\leq k\leq r$) change as we vary $\theta$ across GIT walls that touch $F$. Fix $\theta_+\in C\subset F$ and let $\theta'$ lie in the relative interior of a wall of $C$. The key input is an \'{e}tale local description of the contraction morphism $\mathfrak{M}_{\theta^+}(\mathbf{v},\mathbf{w})\to \mathfrak{M}_{\theta^0}(\mathbf{v},\mathbf{w})$ induced by variation of GIT quotient $\theta^+\rightsquigarrow \theta^0$ that is given in terms of the Ext-quiver of a point in $\mathfrak{M}_{\theta^0}(\mathbf{v},\mathbf{w})$, see \cite[Theorem~3.2]{BC20}. The wall lies either:
 \begin{enumerate}
     \item in the interior of $F$. Then variation of GIT quotient across the wall is a Mukai flop of type $A$, 
     given \'{e}tale-locally by a pair of small resolutions of the closure of the nilpotent orbit of matrices of fixed rank that square to zero \cite[Theorem~5.5]{BC20}:
     \begin{equation}
\begin{tikzcd}
\label{eqn:flop}
 \mathfrak{M}_{\theta^+}(\mathbf{v},\mathbf{w}) \ar[rr,"\psi",dashed] \ar[dr,swap,"f_{\theta_+}"] & & \mathfrak{M}_{\theta^-}(\mathbf{v},\mathbf{w}) \ar[dl,"f_{\theta_-}"] \\
& \mathfrak{M}_{\theta^0}(\mathbf{v},\mathbf{w}) & 
\end{tikzcd}
\end{equation}
In addition, the unstable locus for each contraction $f_{\theta^\pm}$ has codimension at least two, so for each $0\leq k\leq r$, the strict transform across $\psi$ of the line bundle $\det(\mathcal{R}^-_k)$ on $\mathfrak{M}_{\theta^-}(\mathbf{v},\mathbf{w})$ coincides with the line bundle $\det(\mathcal{R}^+_k)$ defined in terms of the corresponding tautological bundle on $\mathfrak{M}_{\theta^+}(\mathbf{v},\mathbf{w})$. It follows that the linearisation maps defined by chambers on either side of the wall agree. Thus, the linear maps $L_C$ determined by chambers $C$ in $F$ all agree, and we may define $L_F:=L_C$ for any chamber $C\subset F$.
     \item in the boundary of $F$. The \'{e}tale-local description of the VGIT morphism reveals that it's a divisorial contractions. For example, a unique chamber in $F$ has a wall in $\delta^\perp$, namely the chamber $C_-$ from \eqref{eqn:HilbnS}, and the VGIT morphism recovers the Hilbert--Chow morphism $\Hilb^{[n]}(S)\to \Sym^n(S)$ which is a divisorial contraction.
 \end{enumerate}
 Combining the equalities $L_F=L_C$ for each chamber $C\subset F$ with the isomorphisms \eqref{eqn:ample} establishes that $L_F$ identifies the GIT chamber decomposition of $F$ with the decomposition of the movable cone into Mori chambers. Thus, we probe all partial crepant resolutions of $\Sym^n(\mathbb{A}^2/\Gamma)$ by variation of GIT quotient on $\mathfrak{M}_\theta(\mathbf{v},\mathbf{w})$ for $\theta$ in $F$.
 \end{proof}
 
 \begin{remarks}
 \begin{enumerate}
 \item The statement of Theorem~\ref{thm:BC20main} holds if we replace $F$ by any of its translates $wF$ by some element $w\in W$ in the Namikawa Weyl group; see \cite[Theorem~1.7]{BC20}. It follows that Theorem~\ref{thm:BC20main} determines the geometry of $\mathfrak{M}_\theta(\mathbf{v},\mathbf{w})$ for every $\theta\in \Theta$.
 \item Theorem~\ref{thm:BC20main}  implies more-or-less immediately that $X= \mathfrak{M}_\theta(\mathbf{v},\mathbf{w})$ for $\theta\in C_+$ is a relative Mori Dream Space over $Y=\Sym^n(\mathbb{A}^2/\Gamma)$, see \cite[Corollary~6.5]{BC20}.
 \item When $n=1$, the map $L_C$ is surjective with a one-dimensional kernel, and $\Theta$ is the product of $\QQ$ with the Weyl chamber decomposition of $\delta^\perp$ for the root system $\Phi$ of type ADE. For the full statement, see \cite[Proposition~7.11]{BC20}.
 \end{enumerate}
 \end{remarks}

\section{How to calculate \texorpdfstring{$\Hilb^{[n]}(\mathbb{A}^2/\Gamma)$}{kleinian}}
\label{sec:CGGS19}
 We now have all the elements in place to describe how the results for $\Hilb^{[n]}(\mathbb{A}^2)$ from Section~\ref{sec:HilbnA2} can be generalised to the Hilbert scheme of $n$ points on any ADE surface singularity. 

 \subsection{As a set} 
 For $n\geq 1$, the \emph{Hilbert scheme of $n$ points in $\mathbb{A}^2/\Gamma$} parametrises subschemes of length $n$ in $\mathbb{A}^2/\Gamma$, or equivalently, ideals in $\kk[x,y]^\Gamma$ for which the quotient has dimension $n$ over $\kk$:
\begin{eqnarray}
\Hilb^{[n]}(\mathbb{A}^2/\Gamma) & := & \big\{Z\subset \mathbb{A}^2/\Gamma \mid \dim_{\kk} H^0(\mathcal{O}_Z) = n\big\} \nonumber \\
& = & \big\{ I\subset \kk[x,y]^\Gamma \mid \dim_{\kk} \kk[x,y]^\Gamma/I = n\big\}.\label{eqn:HilbnADE} 
 \end{eqnarray}
 \noindent For example, in type $A_2$ with $n=2$, the group $\Gamma\cong \ZZ/3$  is generated by the diagonal matrix $\text{diag}(\varepsilon,\varepsilon^2)$ for $\varepsilon = e^{2\pi i/3}$.  The ideal $I =\langle x^6, xy, y^3\rangle$
satisfies $\kk[x,y]^\Gamma/I = \kk\langle 1,x^3\rangle$, so $[I]\in \Hilb^{[2]}(\mathbb{A}^2/\Gamma)$.

 \subsection{As a variety}
To establish the analogue of Theorem~\ref{thm:HilbnGIT} for $\Hilb^{[n]}(\mathbb{A}^2/\Gamma)$, we now introduce a quiver $\mathcal{Q}$ similar to that of Figure~\ref{fig:quiverQ'} using the presentation of $\kk[x,y]^{\Gamma}$. Define the vertex set $\mathcal{Q}_0=\{\infty,0\}$ and arrow set $\mathcal{Q}_1=\{b,a_1,a_2,a_3\}$ as shown in Figure~\ref{fig:quiverQ3loops}.  We impose the relations $[a_k,a_\ell]=0$ for $1\leq k,\ell\leq 3$ and $f(a_1,a_2,a_3)=0$, where $f\in \kk[u,v,w]$ is the defining equation for the ADE singularity $\mathbb{A}^2/\Gamma$ listed in Table~\ref{tab:ADE}.
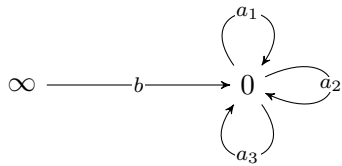
\begin{figure}[h!]
\centering
\begin{tikzpicture}[yscale=0.77]
\node (M0) at (-1,0) {$\infty$};
\node (M1) at (2,0) {$0$};
\draw [->] (M0) to node[gap] {$\scriptstyle{b}$} (M1);
\draw [->,looseness=12, out=120, in=60] (M1) to node[gap] {$\scriptstyle{a_1}$} (M1);
\draw [->,looseness=10, out=30, in=-30] (M1) to node[gap] {$\scriptstyle{a_2}$} (M1);
\draw [->,looseness=12, out=-60, in=-120] (M1) to node[gap] {$\scriptstyle{a_3}$} (M1);
\end{tikzpicture}
\caption{The quiver $\mathcal{Q}$ with relations $[a_k,a_\ell]=0$ $(1\leq k, \ell\leq 3)$ and $f(a_1,a_2,a_3)=0$.}
\label{fig:quiverQ3loops}
\end{figure}
  The point here is that a representation $V$ of $\mathcal{Q}$ with $V_\infty=\kk$ and $V_0=\kk^n$ satisfying the relations encodes a linear map $i\in\Hom(\kk,\kk^n)$ and three commuting endomorphisms $B_1, B_2, B_3\in \End(\kk^n)$ satisfying the additional relation $f(B_1,B_2,B_3) = 0$, so $V_0$ becomes a $\kk[x,y]^\Gamma$-module. If we now define
 \[
 Z:=\Big\{(B_1, B_2, B_3, i)\in \End(\kk^n)^{\oplus 3} \oplus \kk^n \mid [B_k,B_\ell]=0\text{ for }1\leq k, \ell\leq 3\text{ and }f(B_1,B_2,B_3)=0\Big\}
\]
 and let the group $\GL(n,\kk)$ act naturally on $\kk^n$, then the induced action on $Z$ satisfies 
 \[
 g\cdot (B_1,B_2,B_3,i) = \big(gB_1g^{-1},gB_2g^{-1},gB_3g^{-1},gi\big)
 \]
 for $g\in\GL(n,\kk)$. As before, stability conditions are $\theta\colon \ZZ^2\to \ZZ$ satisfying $\theta(v_\infty,v_0) = -n\theta v_\infty + \theta v_0$ for some $\theta\in \ZZ$,  each corresponding to the character $\chi_\theta\in \GL(n,\kk)^\vee$ from \eqref{eqn:character}. The analogue of Theorem~\ref{thm:HilbnGIT} in this situation can be stated as follows.

\begin{theorem}[\textbf{GIT construction of $\Hilb^{[n]}(\mathbb{A}^2/\Gamma)$}]
  \label{thm:HilbnADEGIT}
  Let $\chi\in \GL(n,\kk)^\vee$ be such that 
  $\chi(g) = \det(g)$ for all $g\in \GL(n,\kk)$. Then $\Hilb^{[n]}(\mathbb{A}^2/\Gamma)$ can be constructed as the GIT quotient $Z\git_{\chi} \GL(n,\kk)$.
 \end{theorem}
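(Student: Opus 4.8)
The plan is to follow the proof of Theorem~\ref{thm:HilbnGIT} almost verbatim, since the representation-theoretic setup is identical apart from the extra loop $a_3$ and the additional relation $f(a_1,a_2,a_3)=0$. As in that proof, a point $(B_1,B_2,B_3,i)\in Z$ corresponds to a representation $V=V_\infty\oplus V_0$ of $\mathcal{Q}$ of dimension vector $(1,n)$ satisfying the relations, where the commuting endomorphisms $B_1,B_2,B_3$ with $f(B_1,B_2,B_3)=0$ make $V_0$ into a module over $\kk[u,v,w]/(f)\cong \kk[x,y]^\Gamma$; moreover $\GL(n,\kk)$-orbits correspond to isomorphism classes of representations. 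By King's result~\cite[Theorem~2.2]{King94}, a point of $Z$ is $\chi$-stable (with $\chi=\det$, i.e.\ $\theta=1$) precisely when the associated representation $V$ is $\theta$-stable in the sense of Definition~\ref{def:stability}.

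The key step is the stability computation, which is unaffected by the relations. For any subrepresentation $0\subsetneq V'\subsetneq V$ we have $\theta(V')=-n\dim_\kk V'_\infty+\dim_\kk V'_0$. Subrepresentations with $V'_\infty=0$ automatically satisfy $\theta(V')>0$, whereas a proper subrepresentation with $V'_\infty=V_\infty$ forces $V'_0\subsetneq V_0$ and hence $\theta(V')=-n+\dim_\kk V'_0<0$. Thus $V$ is $\theta$-stable (equivalently $\theta$-semistable) if and only if no proper nonzero subrepresentation contains the image of $i$, which is exactly the statement that $V_0$ is generated by the vector $i(1)$ as a $\kk[x,y]^\Gamma$-module. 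In particular every $\chi$-semistable point is $\chi$-stable, so the GIT quotient is a genuine orbit space.

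It then remains to match the two sides set-theoretically. A cyclic $\kk[x,y]^\Gamma$-module $V_0$ of dimension $n$ with chosen generator $i(1)$ is the same datum as the ideal $I:=\operatorname{Ann}(i(1))\subset\kk[x,y]^\Gamma$ together with the isomorphism $\kk[x,y]^\Gamma/I\cong V_0$, and $\dim_\kk V_0=n$ if and only if $\dim_\kk\kk[x,y]^\Gamma/I=n$; this sets up the bijection between closed points of $Z\git_\chi\GL(n,\kk)$ and the points of $\Hilb^{[n]}(\mathbb{A}^2/\Gamma)$ described in \eqref{eqn:HilbnADE}.

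The main obstacle will be upgrading this bijection to an isomorphism of schemes, which matters here because, unlike for $\mathbb{A}^2$, the Hilbert scheme can fail to be reduced, and the theorem asserts an identification respecting the (possibly non-reduced) structure on each side. I would handle this by the fine-moduli route used in Remark~\ref{rem:fine1} and Section~\ref{sec:fine}: since $\theta=1$ is nonzero and $(1,n)$ is indivisible, $Z\git_\chi\GL(n,\kk)$ is the fine moduli space of $\theta$-stable representations of the relevant quotient algebra, carrying a tautological bundle and tautological algebra homomorphism. Comparing its functor of points with the functor represented by $\Hilb^{[n]}(\mathbb{A}^2/\Gamma)$ — both classifying flat families of length-$n$ cyclic quotient modules of $\kk[x,y]^\Gamma$ — then produces mutually inverse morphisms, and hence the desired scheme isomorphism.
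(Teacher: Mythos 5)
Your proposal is correct and follows the same route as the paper: the paper's proof simply observes that the stability computation from Theorem~\ref{thm:HilbnGIT} carries over verbatim, with $\kk[x,y]^\Gamma$-modules in place of $\kk[x,y]$-modules, which is exactly your key step. Your extra care about the scheme-theoretic (possibly non-reduced) identification via the fine moduli interpretation is welcome but matches what the paper delegates to Remark~\ref{rem:fine3} and the universal property already established in Remark~\ref{rem:fine1}.
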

 
 \begin{proof}
 We've already done the hard work. It remains to show that $i(1)$ generates the $n$-dimensional vector space $V$ as a $\kk[x,y]^\Gamma$-module if and only if the corresponding point $(B_1, B_2,B_3,i)\in Z$ is $\chi$-stable. The proof can be copied verbatim from the proof of Theorem~\ref{thm:HilbnGIT}.
 \end{proof}

 The category of representations of $\mathcal{Q}$ satisfying the relations from Figure~\ref{fig:quiverQ3loops} is equivalent to the category of left-modules over the noncommutative algebra 
 \begin{equation}
     \label{eqn:A0}
A_0:= \kk \mathcal{Q} / \Big\langle [a_1,a_2], [a_1,a_3], [a_2,a_3], f(a_1,a_2,a_3)\Big\rangle.
 \end{equation}
  Write $e_\infty\in A_0$ for the idempotent corresponding to the class of the trivial path at vertex $\infty$. Recall the $\kk$-algebra $R_0\coloneqq\kk[x,y]^\Gamma$.
 
 \begin{lemma}
 \label{lem:A0}
 The $\kk$-algebra $A_0$ is isomorphic to $R_0\oplus R_0b\oplus \kk e_\infty$ with multiplication given by 
 \[
 (\alpha_1,\beta_1b,\gamma_1e_\infty)\cdot (\alpha_2,\beta_2b,\gamma_2e_\infty) = \big(\alpha_1\alpha_2,(\alpha_1\beta_2+\beta_1\gamma_2)b,\gamma_1\gamma_2e_\infty\big)
 \]
 \end{lemma}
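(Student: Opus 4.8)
The plan is to compute $A_0$ via its Peirce decomposition with respect to the two vertex idempotents. Since $e_0+e_\infty = 1$ in $A_0$, there is a vector-space decomposition
\[
A_0 = e_0 A_0 e_0 \;\oplus\; e_0 A_0 e_\infty \;\oplus\; e_\infty A_0 e_0 \;\oplus\; e_\infty A_0 e_\infty,
\]
and the claimed presentation amounts to identifying each of these four corners and then reading off the induced multiplication. Because $e_0 e_\infty = e_\infty e_0 = 0$, the product of any two summands lands in a prescribed corner, so once the corners are pinned down the multiplication formula follows formally.

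First I would identify the corners. The decisive observation is that all four defining relations $[a_1,a_2]$, $[a_1,a_3]$, $[a_2,a_3]$ and $f(a_1,a_2,a_3)$ are cycles based at the single vertex $0$; that is, each relation $r$ satisfies $r = e_0 r e_0$ and lies in the corner $e_0(\kk\mathcal{Q})e_0 = \kk\langle a_1,a_2,a_3\rangle$, the free associative algebra on the three loops. It follows that the two-sided ideal $I$ they generate restricts cleanly to each corner: $e_0 I e_0$ is precisely the two-sided ideal generated by the relations inside $\kk\langle a_1,a_2,a_3\rangle$, so
\[
e_0 A_0 e_0 \cong \kk\langle a_1,a_2,a_3\rangle / \langle [a_k,a_\ell],\, f\rangle \cong \kk[a_1,a_2,a_3]/(f) \cong R_0,
\]
the last isomorphism being the ADE presentation $\kk[x,y]^\Gamma \cong \kk[u,v,w]/(f)$. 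The remaining corners are then easy: $e_\infty(\kk\mathcal{Q})e_\infty = \kk e_\infty$ and $e_\infty(\kk\mathcal{Q})e_0 = 0$ already in the path algebra, since no arrow has head $\infty$; these are unaffected by $I$ because $I \subseteq e_0(\kk\mathcal{Q})$, giving $e_\infty A_0 e_\infty = \kk e_\infty$ and $e_\infty A_0 e_0 = 0$. Finally $e_0(\kk\mathcal{Q})e_\infty = \kk\langle a_1,a_2,a_3\rangle\, b$ is free of rank one over $\kk\langle a_1,a_2,a_3\rangle$ on the path $b$, and quotienting by $e_0 I e_\infty = (e_0 I e_0)\, b$ yields $e_0 A_0 e_\infty \cong R_0 b$, free of rank one over $R_0$.

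With the four corners identified as $R_0$, $R_0 b$, $0$ and $\kk e_\infty$, the multiplication is forced. Writing a general element as $\alpha + \beta b + \gamma e_\infty$ with $\alpha\in R_0$, $\beta b\in R_0 b$ and $\gamma e_\infty\in \kk e_\infty$, I would expand the nine pairwise products and discard those that cross an orthogonal pair of idempotents (every term involving a factor $e_\infty e_0$ vanishes). The surviving terms are $\alpha_1\alpha_2\in R_0$, the two contributions $\alpha_1\beta_2 b$ and $\beta_1\gamma_2 b$ in $R_0 b$, and $\gamma_1\gamma_2 e_\infty$, which assemble into exactly the stated formula.

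The only real content is the middle step, namely that the ideal $I$ does not spill between corners and that $e_0 I e_0$ and $e_0 I e_\infty$ are the expected induced ideals; everything else is formal corner bookkeeping. This step is clean precisely because every relation is an $e_0$-based cycle, so $\kk\mathcal{Q}\, r\, \kk\mathcal{Q} = (\kk\mathcal{Q} e_0)\, r\, (e_0 \kk\mathcal{Q}) \subseteq e_0(\kk\mathcal{Q})e_0 \oplus e_0(\kk\mathcal{Q})e_\infty$, using that $\kk\mathcal{Q} e_0 = e_0(\kk\mathcal{Q})e_0$ since from vertex $0$ only loops are available; I would record this containment explicitly to justify the corner-by-corner quotients above.
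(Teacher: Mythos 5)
Your proposal is correct and follows essentially the same route as the paper's (much terser) proof: both rest on the identification $e_0A_0e_0\cong \kk[u,v,w]/(f)\cong R_0$ together with the observation that the only paths meeting $\infty$ are $e_\infty$ and those whose first arrow is $b$. Your Peirce-decomposition bookkeeping, in particular the check that the ideal generated by the $e_0$-based cycles meets only the corners $e_0(\kk\mathcal{Q})e_0$ and $e_0(\kk\mathcal{Q})e_\infty$, is exactly the detail the paper leaves implicit.
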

 \begin{proof}
 Write $e_0\in A_0$ for the idempotent corresponding to the class of the trivial path at vertex $0$. Then $e_0A_0e_0\cong \kk[u,v,w]/(f)\cong R_0$. The result follows from the fact that the only paths in $\mathcal{Q}$ that touch vertex $\infty$ are the trivial path $e_\infty$ and any path whose first arrow is $b$.
\end{proof}

\begin{remark}
\label{rem:fine3}
As in Remark~\ref{rem:fine1}, for $\theta=1$, we conclude that $\Hilb^{[n]}(\mathbb{A}^2/\Gamma)$ is isomorphic to the fine moduli space $\mathcal{M}(A_0)$ of $\theta$-stable $A_0$-modules of dimension vector $(1,n)$. In particular, it carries a tautological bundle $\mathcal{O}_{\Hilb}\oplus \mathcal{T}$ and a tautological $\kk$-algebra homomorphism $A_0\to \End(\mathcal{O}_{\Hilb}\oplus \mathcal{T})$, and it satisfies a universal property that the reader can by now formulate with confidence. 
\end{remark}

\subsection{As a Nakajima quiver variety}
 We can now state the main result describing $\Hilb^{[n]}(\mathbb{A}^2/\Gamma)$ with the reduced structure as a Nakajima quiver variety. The naive generalisation of  Theorem~\ref{thm:Fogarty}, where one might hope to simply add one extra arrow to the quiver from Figure~\ref{fig:quiverQ3loops} for a new GIT construction, cannot possibly provide the right approach because the resulting quiver would not be the doubled quiver associated to any graph and therefore the GIT quotients would not be Nakajima quiver varieties.
 
 The key question that led to the main result was formulated by Bal\'{a}zs Szendr\H{o}i:
 
\begin{question}
\label{que:szendroi}
 Is $\Hilb^{[n]}(\mathbb{A}^2/\Gamma)$ isomorphic to a Nakajima quiver variety $\mathfrak{M}_\theta(\mathbf{v},\mathbf{w})$ associated to the framed McKay quiver of $\Gamma$ for $\mathbf{v}=n\delta$ and $\mathbf{w}=\rho_0$ with respect to some parameter $\theta_0$? 
\end{question}

 This question was posed following a talk by the author on the results of \cite{BC20}, and indeed the statement of Theorem~\ref{thm:BC20main} makes the question very natural: the Hilbert--Chow morphism from $\Hilb^{[n]}(\mathbb{A}^2/\Gamma)$ makes it a prime candidate to be a partial crepant resolution of $\Sym^n(\mathbb{A}^2/\Gamma)$, and hence a Nakajima quiver variety. As we'll see, the answer to Question~\ref{que:szendroi} is `Yes', at least if we give $\Hilb^{[n]}(\mathbb{A}^2/\Gamma)$ the reduced scheme structure. 
 
 To motivate our approach and our choice of the appropriate $\theta_0$, consider the map  
 \[
\tau\colon n\Gamma\text{-}\Hilb(\mathbb{A}^2)\longrightarrow 
\Hilb^{[n]}(\mathbb{A}^2/\Gamma)
\]
(as a map of sets for now) that sends $[I]$ to $[I\cap\kk[x,y]^\Gamma]$. Equivalently, if $V$ is the representation of the framed McKay quiver $Q$ of dimension vector $(1,n\delta)$ corresponding to $\kk[x,y]/I$, then $\tau(V)$ is the representation of $\mathcal{Q}$ from Figure~\ref{fig:quiverQ3loops} of dimension vector $(1,n)$ corresponding to $\kk[x,y]^\Gamma/(I\cap\kk[x,y]^\Gamma)$. Notice that $\tau$ forgets the information in $V$ encoded by $V_{k}$ for $1\leq k\leq r$, that is, $\tau$ satisfies 
\begin{equation}
    \label{eqn:tausets}
V=\kk\oplus \bigoplus_{0\leq k\leq r}V_k \mapsto \tau(V) =\kk \oplus V_0.
\end{equation}
The key point is that while $V$ is stable with respect to $\theta=(-n\delta,1,1,\dots,1)\in C_+$, we'll see that $\tau(V)$ encodes the data of a representation of $Q$ that is semistable with respect to 
\[
\theta_0:=(-n,1,0,\dots, 0)\in \overline{C_+}.
\]
The main result from
the author's joint work with S{\o}ren Gammelgaard, \'Ad\'am Gyenge and Bal\'{a}zs Szendr\H{o}i~
\cite{CGGS19} reconstructs $\tau$ by variation of GIT quotient and establishes the following:
 
\begin{theorem}[\textbf{$\Hilb^{[n]}(\mathbb{A}^2/\Gamma)$ as a quiver variety}]
\label{thm:main}
Let $\Gamma\subset \SL(2,\kk)$ be a finite subgroup and let $n>1$. Then for any $\theta\in C_+$ and for $\theta_0=(-n,1,0,\dots,0)\in \overline{C_+}$, there is a commutative diagram 
\[
\begin{tikzcd}
\mathfrak{M}_{\theta}(\mathbf{v},\mathbf{w})\ar[r] \ar[d,swap,"\sim"] &\ar[d, swap,"\sim"] \ar[r] \mathfrak{M}_{\theta_0}(\mathbf{v},\mathbf{w})\ar[d,swap] &\mathfrak{M}_0(\mathbf{v},\mathbf{w}) \ar[d,swap,"\sim"]
  \\
n\Gamma\text{-}\Hilb(\mathbb{A}^2) \ar[r,"\tau"]  &  \Hilb^{[n]}(\mathbb{A}^2/\Gamma)_{\red}\ar[r]  &  \Sym^n(\mathbb{A}^2/\Gamma)
 \end{tikzcd}
\]
 of morphisms over $\kk$ in which the vertical maps are isomorphisms and the horizontal maps along the top row are all induced by VGIT. In particular,  $\Hilb^{[n]}(\mathbb{A}^2/\Gamma)_{\red}$ is irreducible, normal, it has symplectic singularities, and $\tau$ provides a unique projective symplectic resolution.
\end{theorem}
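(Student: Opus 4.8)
The plan is to reduce the entire theorem to the single middle isomorphism $\mathfrak{M}_{\theta_0}(\mathbf{v},\mathbf{w})\cong\Hilb^{[n]}(\mathbb{A}^2/\Gamma)_{\red}$. The two horizontal morphisms along the top row are the VGIT morphisms attached to the variations $\theta\rightsquigarrow\theta_0$ (legitimate since $\theta_0\in\overline{C_+}$) and $\theta_0\rightsquigarrow 0$, and the outer vertical isomorphisms are supplied by Example~\ref{exa:nGamma} and Remarks~\ref{rems:Nakrems}(2). Once the middle isomorphism is in place, the remaining assertions are automatic: irreducibility, normality and symplectic singularities transfer from Remarks~\ref{rems:Nakrems}(1), while the statement that $\tau$ is \emph{the} projective symplectic resolution follows from Theorem~\ref{thm:GITdecomp} together with the movable-cone description of Theorem~\ref{thm:BC20main}, exactly as in Remark~\ref{rem:uniqueness}. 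The key point for uniqueness is that $\theta_0$ lies on $\rho_k^\perp$ for every $1\leq k\leq r$ but off $\delta^\perp$, so $C_+$ is the unique chamber of $F$ whose closure contains $\theta_0$; hence the only crepant model dominating $\mathfrak{M}_{\theta_0}(\mathbf{v},\mathbf{w})$ from within $F$ is $\mathfrak{M}_\theta(\mathbf{v},\mathbf{w})$ for $\theta\in C_+$, which is smooth and symplectic.

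The first substantive step is the cornering identification. Writing $A=\Pi/(b^*)$ and $e=e_\infty+e_0$, I would compute the four corners of $eAe$: the relation $b^*=0$ forces $e_\infty A e_\infty=\kk e_\infty$ and $e_\infty A e_0=0$, while Lemma~\ref{lem:preprojEnd} and Remark~\ref{rem:algebraA} give $e_0Ae_0\cong R_0$ and $e_0Ae_\infty\cong R_0 b$. Comparing with Lemma~\ref{lem:A0} yields $eAe\cong A_0$. The cornering functor $e(-)\colon A\text{-Mod}\to A_0\text{-Mod}$ then sends a $\Pi$-module of dimension vector $(1,n\delta)$ to an $A_0$-module of dimension $(1,n)$; applied to the tautological family over $\mathfrak{M}_\theta(\mathbf{v},\mathbf{w})$ for $\theta\in C_+$ it produces a flat family of $A_0$-modules that are $\infty$-generated, because $\theta$-stability of $V$ forces $V_0=e_0 A\, i(1)=R_0\cdot i(1)$. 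By the universal property from Remark~\ref{rem:fine3} this family induces the morphism $\sigma\colon\mathfrak{M}_\theta(\mathbf{v},\mathbf{w})\to\Hilb^{[n]}(\mathbb{A}^2/\Gamma)$ realizing $\tau$, and it satisfies $\sigma^*\mathcal{T}\cong\mathcal{R}_0$.

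Next I would produce the closed immersion from a single line bundle seen in two ways. Put $L:=\det(\mathcal{R}_0)$ on $\mathfrak{M}_\theta(\mathbf{v},\mathbf{w})$. By Lemma~\ref{lem:ellCdet} this is $L_{C_+}(\theta_0)$, hence the pullback along $f_{\theta_0}$ of the polarising ample bundle of $\mathfrak{M}_{\theta_0}(\mathbf{v},\mathbf{w})$; by the previous step it is also $\sigma^*(\det\mathcal{T})$, the pullback of the polarising ample bundle $\det\mathcal{T}$ from the GIT construction of $\Hilb^{[n]}(\mathbb{A}^2/\Gamma)$ in Theorem~\ref{thm:HilbnADEGIT}. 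Since $f_{\theta_0}$ is projective, surjective and birational with connected fibres, the projection formula identifies $H^0(\mathfrak{M}_\theta,L^{\otimes k})$ with the sections of the ample bundle on $\mathfrak{M}_{\theta_0}(\mathbf{v},\mathbf{w})$, so $\sigma$ induces a graded map of the two polarising section rings and therefore a morphism $\bar\sigma\colon\mathfrak{M}_{\theta_0}(\mathbf{v},\mathbf{w})\to\Hilb^{[n]}(\mathbb{A}^2/\Gamma)$ with $\bar\sigma\circ f_{\theta_0}=\sigma$ and $\bar\sigma^*(\det\mathcal{T})$ ample. Checking on the level of semi-invariant functions that cornering makes this map of section rings surjective then upgrades $\bar\sigma$ to a closed immersion into $\Hilb^{[n]}(\mathbb{A}^2/\Gamma)_{\red}$.

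The final, and by far the hardest, step is surjectivity of $\bar\sigma$: every $\infty$-generated $A_0$-module $U$ of dimension $(1,n)$ must arise as $eV$ for some $\theta_0$-semistable $A$-module $V$ of dimension vector exactly $(1,n\delta)$. Since $\theta_0$ weights only the vertices $\infty$ and $0$, such a $V$ is $\theta_0$-semistable precisely when $V_0$ is cyclic over $R_0$, which is exactly the $\infty$-generation of $U=eV$; so semistability is automatic and the whole difficulty is dimensional. Here I would exploit the recollement determined by $e$, relating $A\text{-Mod}$ to $A_0\text{-Mod}$ via $e(-)$ and its adjoints $Ae\otimes_{A_0}(-)$ and $\Hom_{A_0}(eA,-)$, together with $A/AeA\text{-Mod}$. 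The naive lift $Ae\otimes_{A_0}U$ corners back to $U$ but is generally too big at the vertices $\rho_k$ — already for type $A_1$, $n=2$ and a length-two scheme supported at the singular point one computes dimension vector $(1,2,3)$ rather than $(1,2,2)$ — so I would instead pass to the intermediate-extension module, the image of $Ae\otimes_{A_0}U\to\Hom_{A_0}(eA,U)$, and show that quotienting out the maximal submodule supported on the cornered vertices restores the dimension vector $n\delta$. I expect this dimension count to be the main obstacle: it must encode, through the preprojective structure of $\Pi_\Gamma$, the fact that a $\kk[x,y]\!*\!\Gamma$-module whose invariant part is a cyclic length-$n$ module can always be trimmed to one isomorphic to $\kk[\Gamma]^{\oplus n}$ as a $\Gamma$-representation without altering its invariants, and it is precisely here that the reduced scheme structure is forced upon us. Granting this, $\bar\sigma$ is a bijective closed immersion out of a reduced normal variety, hence an isomorphism onto $\Hilb^{[n]}(\mathbb{A}^2/\Gamma)_{\red}$, and the stated geometric consequences follow from the paragraph above.
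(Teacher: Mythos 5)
Your architecture matches the paper's almost exactly: the reduction to the middle isomorphism, the cornering identification $A_0\cong (e_\infty+e_0)A(e_\infty+e_0)$, the universal morphism induced by $\mathcal{O}_{\mathfrak{M}}\oplus\mathcal{R}_0$, the identification of $L=\det(\mathcal{R}_0)$ as the pullback of the polarising ample bundle along both $\varphi_{|L|}$ and $\tau$, the resulting closed immersion $\iota$, the uniqueness argument via the position of $\theta_0$ in $\overline{C_+}$, and the use of $j_!=A(e_\infty+e_0)\otimes_{A_0}(-)$ from the recollement to attack surjectivity on closed points. Up to and including the closed immersion, your account is essentially the paper's proof.

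The divergence, and the gap, is in the final step. You correctly identify that $j_!(N)$ can be too large at the cornered vertices, but your proposed fix --- that the intermediate extension $\operatorname{im}\bigl(j_!(N)\to \Hom_{A_0}((e_\infty+e_0)A,N)\bigr)$ ``restores the dimension vector $n\delta$'' --- is not what happens and is not what is needed. For $N$ supported at the singular point the module with no sub or quotient killed by $e_\infty+e_0$ generally has dimension \emph{strictly less} than $n\dim(\rho_k)$ at some vertices (already visible for $\delta$ itself over the origin, where the stable module through $\infty$ has dimension vector $(1,1,0,\dots,0)$, not $(1,\delta)$). What one actually needs is only the inequalities $\dim_k\leq n\dim(\rho_k)$ for the unique $\theta_0$-stable summand of the polystable representative in the S-equivalence class of $j_!(N)$; one then pads with copies of the vertex simples $\kk e_k$ (which are $\theta_0$-stable of slope zero, so this does not disturb semistability or the cornered module) to reach dimension vector exactly $(1,n\delta)$. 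Those inequalities are the genuine mathematical content of the surjectivity step, and you do not prove them --- you explicitly ``grant'' them. In \cite{CGGS19} they are established by a case-by-case analysis over the ADE types (Lemma~4.6 and Appendix~A there), and encoding them as a statement about trimming a $\kk[x,y]*\Gamma$-module, as you suggest, is a reformulation of the problem rather than a solution. So: same route as the paper, with the hardest step asserted rather than proved, and with the intermediate-extension mechanism stated in a form (equality of dimension vectors) that is false as written.
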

 

\begin{remarks}
\begin{enumerate}
\item Irreducibility or otherwise of any scheme can be detected on the underlying reduced scheme, so we deduce that $\Hilb^{[n]}(\mathbb{A}^2/\Gamma)$ is irreducible.  
  \item As noted before, Bellamy--Schedler~\cite{BS21} establish that Nakajima quiver varieties $\mathfrak{M}_\eta(\mathbf{v},\mathbf{w})$ is irreducible, normal and it has symplectic singularities for any $\eta\in \Theta$, so the corresponding assertions for $\Hilb^{[n]}(\mathbb{A}^2/\Gamma)_{\red}$ follow from the middle isomorphism.
  \item Uniqueness of the symplectic resolution of $\mathfrak{M}_{\theta_0}(\mathbf{v},\mathbf{w})$ follows easily from \cite{BC20}. To see this, note that $\theta_0=(-n,1,0,\dots,0)$ lies in an extremal ray of the cone $F$ from \eqref{eqn:F}, while the decomposition of $F$ into GIT chambers given by Theorem~\ref{thm:GITdecomp} shows that $\theta_0$ lies in the closure of only one such chamber, namely $C_+$. The isomorphism of Theorem~\ref{thm:BC20main} now implies that the line bundle $L_F(\theta_0)$ lies in the closure of precisely one Mori chamber, namely, that defining $\mathfrak{M}_{\theta}(\mathbf{v},\mathbf{w})\cong n\Gamma\text{-}\Hilb(\mathbb{A}^2)$ for $\theta\in C_+$. Thus, the VGIT morphism $\mathfrak{M}_{\theta}(\mathbf{v},\mathbf{w})\to \mathfrak{M}_{\theta_0}(\mathbf{v},\mathbf{w})$ provides the unique projective symplectic resolution.
   \item The right-hand vertical isomorphism $\mathfrak{M}_0(\mathbf{v},\mathbf{w}) \cong \Sym^n(\mathbb{A}^2/\Gamma)$ is discussed in Remark~\ref{rems:Nakrems}(2).
\end{enumerate}
\end{remarks}

\begin{example}[\textbf{Type $A_2$ with $n=2$}]
 \label{exa:A2n2}
  The cyclic group $\Gamma\cong \ZZ/3$  is generated by the diagonal matrix $\text{diag}(\varepsilon,\varepsilon^2)$ for $\varepsilon = e^{2\pi i/3}$.  The $\Gamma$-invariant ideal $I=\langle x^6,y\rangle$ satisfies $\kk[x,y]/I = \kk\langle 1,x,x^2,x^3,x^4,x^5\rangle$, so it defines a point $[I]\in 2\Gamma\text{-}\Hilb(\mathbb{A}^2)$. The ideal $I\cap \kk[x,y]^\Gamma =\langle x^6, xy, y^3\rangle$
in the $\Gamma$-invariant subring satisfies $\kk[x,y]^\Gamma/(I\cap \kk[x,y]^\Gamma) = \kk\langle 1,x^3\rangle$, and $\tau([I]) = [I\cap \kk[x,y]^\Gamma]\in \Hilb^{[2]}(\mathbb{A}^2/\Gamma)$. 
  \end{example}

\section{On the proof of the main result}
\label{sec:sketch}
 There are two quite distinct steps in the proof of Theorem~\ref{thm:main}: the first is geometric; while the second is algebraic (and to a lesser extent, combinatorial) in nature.
 
 \subsection{Step 1: the geometry of linear series} 
 For $\theta\in C_+$, we first construct two morphisms from $\mathfrak{M}_\theta(\mathbf{v},\mathbf{w})$ and show that they agree up to the choice of scheme structure on $\Hilb^{[n]}(\mathbb{A}^2/\Gamma)$. The main player is the line bundle
 \[
 L:=L_F(\theta_0) = 
 \det(\mathcal{R}_{0})
 \]
 on $\mathfrak{M}_\theta(\mathbf{v},\mathbf{w})$ which, in light of Lemma~\ref{lem:ellCdet}, descends from the $G$-equivariant line bundle $\chi_{\theta_0}\otimes\mathcal{O}$ on $\mu^{-1}(0)^{\chi_\theta\textrm{-s}}$. Since $\theta_0$ lies in the boundary of the chamber $C_+$ containing $\theta$, the inclusion of $\mu^{-1}(0)^{\chi_\theta\textrm{-s}}$ into the $\theta_0$-semistable locus $\mu^{-1}(0)^{\chi_{\theta_0}\textrm{-ss}}$ is a $G$-equivariant map and, just as for the construction of $f_\theta$ from Remark~\ref{rems:Nakrems}(3), it induces a morphism
 \begin{equation}
     \label{eqn:varphiL}
 \varphi_{\vert L\vert }\colon \mathfrak{M}_\theta(\mathbf{v},\mathbf{w})\longrightarrow \mathfrak{M}_{\theta_0}(\mathbf{v},\mathbf{w}) =  \Proj \bigoplus_{k\geq 0} H^0\big(\mathfrak{M}_\theta(\mathbf{v},\mathbf{w}),L^{\otimes k}\big)\subseteq \vert L\vert
  \end{equation}
 by variation of GIT quotient that satisfies $L=\varphi_{\vert L\vert}^*\big(\mathcal{O}_{\mathfrak{M}}(1)\big)$.
 
 This same line bundle also features in the second morphism from $\mathfrak{M}_\theta(\mathbf{v},\mathbf{w})$ as follows.
 
 \begin{proposition}
 \label{prop:universal}
 The vector bundle $\mathcal{O}_{\mathfrak{M}}\oplus \mathcal{R}_{0}$ on $\mathfrak{M}_\theta(\mathbf{v},\mathbf{w})$ induces a universal morphism 
 \begin{equation}
     \label{eqn:tauvars} 
 \tau\colon\mathfrak{M}_\theta(\mathbf{v},\mathbf{w})\longrightarrow \Hilb^{[n]}(\mathbb{A}^2/\Gamma)
  \end{equation}
 that satisfies $L=\tau^*\big(\mathcal{O}_{\Hilb}(1)\big)$.
 \end{proposition}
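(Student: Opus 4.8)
The plan is to obtain $\tau$ from the fine moduli description $\Hilb^{[n]}(\mathbb{A}^2/\Gamma)\cong\mathcal{M}(A_0)$ of Remark~\ref{rem:fine3}, by exhibiting $\mathcal{O}_{\mathfrak{M}}\oplus\mathcal{R}_0$ as a flat family of $\theta$-stable $A_0$-modules of dimension vector $(1,n)$. The first step is to identify $A_0$ as a \emph{corner} of $A=\Pi/(b^*)$. Set $e=e_\infty+e_0\in A$. Since $b^*=0$ in $A$, no nontrivial path returns to $\infty$, so $e_\infty Ae_0=0$ and $e_\infty Ae_\infty=\kk e_\infty$, while $e_0Ae_0\cong R_0$ by Remark~\ref{rem:algebraA} together with Lemma~\ref{lem:preprojEnd}, and $e_0Ae_\infty\cong R_0 b$ by a direct path computation. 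Comparing the resulting multiplication with Lemma~\ref{lem:A0} yields a $\kk$-algebra isomorphism $eAe\cong A_0$; this is precisely the cornering operation foreshadowed in Remark~\ref{rem:algebraA}.

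Next I would apply the cornering functor $M\mapsto eM$ to the tautological bundle. The tautological homomorphism $\Pi\to\End(\mathcal{R})$ factors through $A$, and the images of the orthogonal idempotents $e_\infty$ and $e_0$ split off the summands $\mathcal{O}_{\mathfrak{M}}$ and $\mathcal{R}_0$ of $\mathcal{R}$, so $e\mathcal{R}=\mathcal{O}_{\mathfrak{M}}\oplus\mathcal{R}_0$ inherits an action of $eAe\cong A_0$ by restriction along $A_0\hookrightarrow A$. Being a vector bundle, $\mathcal{O}_{\mathfrak{M}}\oplus\mathcal{R}_0$ is flat over $\mathfrak{M}_\theta(\mathbf{v},\mathbf{w})$; its dimension vector is $(1,n)$ because $\rank\mathcal{R}_0=n\dim\rho_0=n$; and cornering commutes with base change, so this is a genuine flat family of $A_0$-modules.

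The heart of the argument is fibrewise $\theta$-stability, and this is where I expect the main difficulty to lie. Over a closed point the fibre of $\mathcal{R}$ is a $\theta$-stable $A$-module $V=\kk\oplus\bigoplus_{k}V_k$ with $\theta\in C_+$; as in Example~\ref{exa:nGamma} this stability says exactly that $V$ is generated over $A$ by $i(1)\in V_0$. Applying the idempotent $e_0$ to $V=A\cdot i(1)$ and using $i(1)=e_0\cdot i(1)$ gives $V_0=e_0Ae_0\cdot i(1)=R_0\cdot i(1)$; that is, the $\rho_0$-component is already cyclic over the corner $R_0$. In terms of King stability for the quiver $\mathcal{Q}$ of Figure~\ref{fig:quiverQ3loops} with $\theta=1$, any destabilising subrepresentation of $eV$ must contain $V_\infty$ and hence $R_0\cdot i(1)=V_0$, forcing it to equal $eV$; thus $eV$ is $\theta$-stable over $A_0$. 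The subtle point is precisely that $\infty$-generation over the \emph{full} framed algebra descends to cyclicity over the $\rho_0$-corner, and this uses the choice $\theta\in C_+$ in an essential way: for a generic parameter in another chamber the cornered fibre need not be stable.

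With a flat family of $\theta$-stable $A_0$-modules in hand, the universal property of $\mathcal{M}(A_0)\cong\Hilb^{[n]}(\mathbb{A}^2/\Gamma)$ produces a unique morphism $\tau\colon\mathfrak{M}_\theta(\mathbf{v},\mathbf{w})\to\Hilb^{[n]}(\mathbb{A}^2/\Gamma)$ with $\tau^*(\mathcal{O}_{\Hilb}\oplus\mathcal{T})\cong\mathcal{O}_{\mathfrak{M}}\oplus\mathcal{R}_0$, and in particular $\tau^*\mathcal{T}\cong\mathcal{R}_0$. Finally, since the GIT construction of $\Hilb^{[n]}(\mathbb{A}^2/\Gamma)$ in Theorem~\ref{thm:HilbnADEGIT} is linearised by $\chi=\det$, the polarising ample bundle is $\mathcal{O}_{\Hilb}(1)\cong\det\mathcal{T}$ by the descent argument of Lemma~\ref{lem:ellCdet}. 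Taking determinants then gives $\tau^*\mathcal{O}_{\Hilb}(1)\cong\det(\tau^*\mathcal{T})\cong\det\mathcal{R}_0=L$, as required.
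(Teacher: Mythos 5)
Your proposal is correct and follows essentially the same route as the paper: identify $A_0$ with the corner $(e_\infty+e_0)A(e_\infty+e_0)$ (the paper's Lemma~\ref{lem:A0cornering}), corner the tautological homomorphism to make $\mathcal{O}_{\mathfrak{M}}\oplus\mathcal{R}_0$ a flat family of $\infty$-generated $A_0$-modules of dimension vector $(1,n)$, invoke the universal property of the fine moduli space from Remark~\ref{rem:fine3}, and take determinants to get $L=\tau^*(\mathcal{O}_{\Hilb}(1))$. The only difference is that you spell out the fibrewise $\infty$-generation (via $V_0=e_0Ae_0\cdot i(1)=R_0\cdot i(1)$), a step the paper asserts without detail; your argument for it is correct.
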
 
 
 Before proving this result, let's discuss the notion of \emph{cornering} a quiver algebra. First introduced in \cite{Craw11special} and studied in greater generality in joint work with Yukari Ito and Joe Karmazyn~\cite{CIK18}, the idea allows one to study the endomorphism algebra of a sub-bundle of the tautological bundle on a quiver moduli space. Recall that our quiver variety comes equipped with a tautological $\kk$-algebra homomorphism which, in light of Remark~\ref{rem:algebraA}, may be regarded as a $\kk$-algebra homomorphism
 \begin{equation}
 \label{eqn:tauthomoA}
 A\longrightarrow \End\bigg(\mathcal{O}_{\mathfrak{M}}\oplus \bigoplus_{0\leq k\leq r} \mathcal{R}_k\bigg)
 \end{equation}
 for $A\coloneqq \Pi/(b^*)$. Motivated by our discussion of the map of sets from \eqref{eqn:tausets}, our interest lies with the summand $\mathcal{O}_{\mathfrak{M}}\oplus \mathcal{R}_{0}$. To strip out the other summands from \eqref{eqn:tauthomoA}, let $e_k\in A$ denote the idempotent given by the class of the trivial path at vertex $k\in Q_0$; simplify notation by writing $e_k:=e_{\rho_k}$ for $0\leq k\leq r$. The process of \emph{cornering away}\footnote{
 If we view the vertices and paths defining elements of $A$ as the stations and routes in a railway system, then cornering away a vertex means that we're shutting the corresponding station but allowing trains to pass through it.} the vertices $\rho_1, \dots, \rho_r$ in $Q$ is the passage from $A$ to the subalgebra
$(e_\infty+e_{0}) A (e_\infty+e_{0})$ of $A$ spanned by the classes of paths whose tail and head lie in the set $\{\infty\}\cup \{\rho_0\}$. The next result is the fundamental reason why the main results of \cite{CGGS19} hold. 
 
 \begin{lemma}
 \label{lem:A0cornering}
 The algebra $A_0$ from \eqref{eqn:A0} is isomorphic to the subalgebra $(e_\infty+e_{0}) A (e_\infty+e_{0})$ of $A$. 
 \end{lemma}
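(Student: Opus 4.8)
Both algebras carry the pair of orthogonal idempotents $e_\infty, e_0$ (with $e_0=e_{\rho_0}$ on the right-hand side), so each decomposes as a direct sum of its four corners indexed by $\{\infty,0\}$. The plan is to compute these corners for $B\coloneqq(e_\infty+e_0)A(e_\infty+e_0)$, to observe that they agree with the corners of $A_0$ recorded in Lemma~\ref{lem:A0}, and then to assemble a corner-by-corner isomorphism. Concretely, I would build a $\kk$-algebra homomorphism $A_0\to B$ sending $e_\infty\mapsto e_\infty$, $e_0\mapsto e_0$, $b\mapsto b$, and the loops $a_1,a_2,a_3$ to suitable cycles at $\rho_0$, and then verify bijectivity on each corner.

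\textbf{Computing the corners of $B$.} The only arrows of $Q$ incident to $\infty$ are $b\colon\infty\to\rho_0$ and $b^*\colon\rho_0\to\infty$, and $b^*$ has been killed in $A=\Pi/(b^*)$. Hence no arrow of $A$ points into $\infty$, so the only path of $A$ ending at $\infty$ is the trivial one; this gives $e_\infty A e_\infty=\kk e_\infty$ and $e_\infty A e_0=0$. For the same reason no path from $\rho_0$ to $\rho_0$ can visit $\infty$, so every such path lies in the McKay quiver $Q^\Gamma$; moreover the $\Pi$-relation indexed by $k=0$ differs from the $\Pi_\Gamma$-relation only in the term $\epsilon(b)bb^*$, which vanishes modulo $(b^*)$. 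Thus $e_0 A e_0=e_0\Pi_\Gamma e_0$, and Lemma~\ref{lem:preprojEnd} (cf.\ Remark~\ref{rem:algebraA}) identifies this with $\End_{R_0}(R_0)=R_0$. Finally every path $\infty\to\rho_0$ must begin with $b$, so $e_0 A e_\infty=(e_0 A e_0)\,b\cong R_0\,b$, free of rank one over $R_0$. Comparing with Lemma~\ref{lem:A0}, the four corners of $B$ match those of $A_0$, namely $\kk e_\infty$, $0$, $R_0$, and $R_0 b$.

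\textbf{The isomorphism.} Under the identification $e_0 A_0 e_0\cong\kk[u,v,w]/(f)\cong R_0$ from the proof of Lemma~\ref{lem:A0}, the loops $a_1,a_2,a_3$ correspond to the three $\kk$-algebra generators of $R_0$. I would then define $A_0\to B$ on generators by $e_\infty\mapsto e_\infty$, $e_0\mapsto e_0$, $b\mapsto b$, and $a_1,a_2,a_3\mapsto$ the cycles at $\rho_0$ in $e_0 A e_0\cong R_0$ representing those same generators. Since the images of $a_1,a_2,a_3$ lie in the commutative ring $e_0 A e_0\cong R_0$ and satisfy precisely the relation $f=0$ defining $R_0\cong\kk[u,v,w]/(f)$, the defining relations $[a_k,a_\ell]$ and $f(a_1,a_2,a_3)$ of \eqref{eqn:A0} map to zero, so this is a well-defined $\kk$-algebra homomorphism. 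On the $(\infty,\infty)$- and $(\infty,0)$-corners it is the identity on $\kk e_\infty$ and the zero map between zero spaces; on the $(0,0)$-corner it is the ring isomorphism $R_0\to R_0$ just described; and on the $(0,\infty)$-corner it is the $R_0$-linear map $R_0 b\to R_0 b$ with $b\mapsto b$, an isomorphism of free rank-one modules. As the multiplication rule of Lemma~\ref{lem:A0} is exactly the composition of paths in $B$ (note $b\cdot b=0$ because $e_\infty e_0=0$), the map respects products and is therefore an isomorphism.

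\textbf{Main obstacle.} The one nontrivial input is the computation $e_0 A e_0\cong R_0$; everything else is bookkeeping about which paths survive cornering. This identification is exactly where the preprojective structure enters, through Lemma~\ref{lem:preprojEnd}, together with the two observations that killing $b^*$ both severs all return paths to $\infty$ and collapses the degree-$\rho_0$ relation of $\Pi$ to that of $\Pi_\Gamma$.
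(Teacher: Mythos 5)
Your proposal is correct and follows essentially the same route as the paper: both arguments rest on the identification $e_0 A e_0 = e_0\Pi_\Gamma e_0\cong R_0$ via Lemma~\ref{lem:preprojEnd}, the observation that killing $b^*$ leaves $e_\infty$ and paths beginning with $b$ as the only paths touching $\infty$, and the resulting match with the description $A_0\cong R_0\oplus R_0 b\oplus\kk e_\infty$ from Lemma~\ref{lem:A0}. Your corner-by-corner bookkeeping and the explicit check that the $k=0$ preprojective relation collapses to that of $\Pi_\Gamma$ modulo $(b^*)$ simply make explicit what the paper leaves implicit.
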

 \begin{proof}
 Let $e_{0}\in \Pi_\Gamma\subset A$ denote the idempotent given by the class of the trivial path in the McKay quiver at vertex $\rho_0$. Lemma~\ref{lem:preprojEnd} gives a $\kk$-algebra isomorphism $e_{0} \Pi_\Gamma e_{0}\cong \End_{R_0}(R_0)\cong R_0$. The only paths in $Q$ that touch vertex $\infty$ are the trivial path $e_\infty$ and any path whose first arrow is $b$, so the algebra $(e_\infty+e_{0}) A (e_\infty+e_{0})$ is isomorphic to $R_0\oplus R_0b\oplus \kk e_\infty$ with multiplication given by 
 \[
 (\alpha_1,\beta_1b,\gamma_1e_\infty)\cdot (\alpha_2,\beta_2b,\gamma_2e_\infty) = \big(\alpha_1\alpha_2,(\alpha_1\beta_2+\beta_1\gamma_2)b,\gamma_1\gamma_2e_\infty\big).
 \]
 The result now follows from Lemma~\ref{lem:A0}.
 \end{proof}
  
  \begin{proof}[Proof of Proposition~\ref{prop:universal}]
   Start with the tautological $\kk$-algebra homomorphism \eqref{eqn:tauthomoA}, then multiply $A$ on left and right by $(e_\infty + e_{0})$, while simultaneously stripping away the summands $\mathcal{R}_{1}, \dots, \mathcal{R}_{r}$ from the endomorphism algebra. Applying Lemma~\ref{lem:A0cornering} gives a $\kk$-algebra homomorphism
   \[
   A_0\longrightarrow \End\big(\mathcal{O}_{\mathfrak{M}}\oplus  \mathcal{R}_0\big), 
   \]
   so the bundle $\mathcal{O}_{\mathfrak{M}}\oplus \mathcal{R}_{0}$ is a flat family of $A_0$-modules of dimension vector $(1,n)$ on $\mathfrak{M}_\theta(\mathbf{v},\mathbf{w})$. Each fibre of this family is $\infty$-generated, so Remark~\ref{rem:fine3} gives the universal morphism $\tau$ from \eqref{eqn:tauvars} such that $\tau^*(\mathcal{O}_{\Hilb})=\mathcal{O}_{\mathfrak{M}}$ and $\tau^*(\mathcal{T})=\mathcal{R}_{0}$. The polarising ample line bundle on $\Hilb^{[n]}(\mathbb{A}^2/\Gamma)$ is $\det(\mathcal{T})$, and since pullback commutes with tensor operations on bundles, we are able to conclude that $L=\det(\mathcal{R}_{0})\cong\det\big(\tau^*(\mathcal{T})\big)\cong  \tau^*(\det(\mathcal{T})\big) =\tau^*\big(\mathcal{O}_{\Hilb}(1)\big)$.
  \end{proof}

 The key line bundle $L$ can therefore be obtained as the pullback of the polarising ample line bundle via both $\varphi_{\vert L\vert}$ from \eqref{eqn:varphiL} and $\tau$ from \eqref{eqn:tauvars}, so the images of these morphisms coincide. It is not hard to show that $\varphi_{\vert L\vert}$ is surjective \cite[Lemma~2.2]{CGGS19}, so we obtain a commutative diagram
\[
\begin{tikzcd}
& \mathfrak{M}_\theta(\mathbf{v},\mathbf{w})\ar[dr,"\tau"] \ar[dl,swap,"\varphi_{\vert L\vert}"] & 
  \\
  \mathfrak{M}_{\theta_0}(\mathbf{v},\mathbf{w}) \ar[rr,"\iota"]  & &  \Hilb^{[n]}(\mathbb{A}^2/\Gamma)
 \end{tikzcd}
\]
where $\iota$ is a closed immersion.

 \subsection{Step 2: the algebra of recollement} 
 The isomorphism $\mathfrak{M}_{\theta_0}(\mathbf{v},\mathbf{w})\cong\Hilb^{[n]}(\mathbb{A}^2/\Gamma)_{\red}$ from Theorem~\ref{thm:main} will follow once we prove that $\iota$ is surjective on closed points. To this end, fix a closed point $[N]\in \Hilb^{[n]}(\mathbb{A}^2/\Gamma)$; note that Remark~\ref{rem:fine3} and Lemma~\ref{lem:A0} show that $N$ is an $\infty$-generated module over the algebra $A_0\cong (e_\infty+e_{0}) A (e_\infty+e_{0})$ obtained from $A$ by cornering away the vertices $\rho_1,\dots, \rho_r$.  This observation is important as it allows us to feed $N$ into the functor
 \[
 j_{!}\colon A_0\text{-mod}\longrightarrow A\text{-mod}\; : \; N\mapsto j_{!}(N)=A(e_\infty+e_{0})\otimes_{A_0} N
 \]
 that appears in a recollement of the abelian category of $A$-modules. The proof of the following result, which goes back to \cite[Lemma~3.6]{CIK18}, is straightforward, see \cite[Lemma~4.4-4.5]{CGGS19}:
 
 \begin{lemma}
 Let $N$ be an $\infty$-generated $A_0$-module of dimension vector $(1,n)$. The $A$-module $j_{!}(N)$ is $\theta_0$-semistable  of finite dimension such that $\dim_\infty j_{!}(N) = 1$ and $\dim_{0} j_{!}(N) = n$. 
 \end{lemma}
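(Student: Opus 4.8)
The plan is to read off all the assertions from an explicit, vertex-by-vertex description of $j_!(N)$, exploiting the fact that $A$ is the framed preprojective algebra in which $b$ is the only arrow incident to the framing vertex (recall $b^*$ has been deleted). Write $e=e_\infty+e_0$, so that $A_0\cong eAe$ by Lemma~\ref{lem:A0cornering}. The first step is the observation that restriction along $e$ recovers $N$: since $e(Ae)=eAe=A_0$, we have $e\cdot j_!(N)=eAe\otimes_{A_0}N\cong N$ as $A_0$-modules. In particular $e_\infty\,j_!(N)\cong e_\infty N$ and $e_0\,j_!(N)\cong e_0 N$, which immediately gives $\dim_\infty j_!(N)=1$ and $\dim_0 j_!(N)=n$, and moreover shows that the $A_0$-action on the $\{\infty,0\}$-part of $j_!(N)$ is literally the original action on $N$.

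Next I would control the remaining components to prove finite dimensionality. Because $b$ points from $\infty$ into $\rho_0$, every path in $e_kAe_\infty$ factors as a McKay path $\rho_0\to\rho_k$ followed by $b$; using that $b\in e_0A_0e_\infty$ one absorbs it across the tensor product, so that the natural map $e_kAe_0\otimes_{R_0}N_0\to e_k\,j_!(N)$ is surjective, where $R_0=e_0Ae_0$. By Lemma~\ref{lem:preprojEnd} we have $e_kAe_0\cong e_k\Pi_\Gamma e_0\cong R_k$ as right $R_0$-modules, whence $\dim_\kk e_k\,j_!(N)\le \dim_\kk(R_k\otimes_{R_0}N_0)$. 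Since $\kk[x,y]$ is module-finite over $R_0=\kk[x,y]^\Gamma$, each $R_k$ is finitely generated over $R_0$, and as $N_0$ is finite dimensional the tensor product $R_k\otimes_{R_0}N_0$ is finite dimensional over $\kk$. Summing over $k$ shows $j_!(N)$ is finite dimensional.

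It remains to verify $\theta_0$-semistability, and here the key simplification is that $\theta_0=(-n,1,0,\dots,0)$ vanishes at the vertices $\rho_1,\dots,\rho_r$, so for any subrepresentation $V'$ one computes $\theta_0(V')=-n\,\dim V'_\infty+\dim V'_0$. As $\dim V'_\infty\in\{0,1\}$, the inequality $\theta_0(V')\ge 0$ is automatic when $V'_\infty=0$ and, when $V'_\infty=N_\infty$, reduces to $\dim V'_0=n$, that is, $V'_0=N_0$. Thus semistability is equivalent to the single statement that the $A$-submodule of $j_!(N)$ generated by $N_\infty$ already contains $N_0$. Its $\rho_0$-component is $e_0A\cdot N_\infty$, and since $e_0Ae_\infty=R_0b$ this equals $R_0b\cdot N_\infty$, computed via the $A_0$-action identified in the first step. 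By hypothesis $N$ is $\infty$-generated, so $N_0=e_0A_0\cdot N_\infty=R_0b\cdot N_\infty$; hence the generated submodule contains $N_0$, and semistability follows.

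I expect the finite-dimensionality step to be the main obstacle: $A$ and $\Pi_\Gamma$ are infinite dimensional, so a priori the components of $j_!(N)$ at the cornered vertices are unbounded, and controlling them genuinely requires both the bimodule analysis that absorbs the arrow $b$ across the tensor product and the commutative-algebra input that $\kk[x,y]$ is module-finite over its ring of invariants. By contrast, once the dimension vector and the identification of the $A_0$-action are in hand, the semistability computation is essentially forced by the shape of $\theta_0$.
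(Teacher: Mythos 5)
Your proof is correct. The paper itself does not prove this lemma --- it defers to \cite[Lemmas~4.4--4.5]{CGGS19} and \cite[Lemma~3.6]{CIK18} --- but the argument there is essentially the one you give: the idempotent calculus $e\, j_{!}(N)\cong N$ for the dimension vector at $\infty$ and $\rho_0$, the surjection from $e_kAe_0\otimes_{R_0}N_0$ together with finite generation of $e_kAe_0\cong R_k$ over $R_0$ for finite-dimensionality, and the vanishing of $\theta_0$ away from $\infty$ and $\rho_0$ combined with $\infty$-generation of $N$ for semistability.
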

 
 At this stage, we don't yet know the 
 value of $\dim_{k} j_{!}(N)$ for $1\leq k\leq r$. If one could show that 
 \begin{equation}
     \label{eqn:ndeltainequal}
 \dim_{k} j_{!}(N)\leq n\dim(\rho_k) \quad\text{ for all }1\leq k\leq r,
  \end{equation}
 then the direct sum of $j_{!}(N)$ with $(n\dim(\rho_k)-\dim_{k} j_{!}(N))$ copies of each vertex simple $A$-module $\kk e_{k}$ for $1\leq k\leq r$ would define a $\theta_0$-semistable $A$-module $M$ of dimension vector $(1,n\delta)$ satisfying 
 \[
 (e_\infty+e_{0})A\otimes_{A_0}M \cong (e_\infty+e_{0})A\otimes_{A_0}j_{!}(N)\cong N.
 \]
 Then surjectivity of $\varphi_{\vert L\vert}$ and the construction of $\iota$ together give $\iota\big([M]\big)=[N]$, so $\iota$ (and hence also $\tau$) would be surjective on closed points, giving the desired isomorphism \[
 \mathfrak{M}_{\theta_0}(\mathbf{v},\mathbf{w})\cong\Hilb^{[n]}(\mathbb{A}^2/\Gamma)_{\red}.
 \]
 In fact, we don't know \eqref{eqn:ndeltainequal} in general, but it is nevertheless possible to establish inequalities analogous to those from \eqref{eqn:ndeltainequal} for the unique $\theta_0$-stable module appearing in the $\theta_0$-polystable representative in the S-equivalence class of $j_{!}(N)$; see \cite[Lemma~4.6, Section~A.1]{CGGS19}.
 
 We conclude by confirming that the proof of Theorem~\ref{thm:mainintro} is now complete.
 
\begin{proof}[Proof of Theorem~\ref{thm:mainintro}]
 The construction of the isomorphism $\mathfrak{M}_{\theta_0}(\mathbf{v},\mathbf{w})\cong\Hilb^{[n]}(\mathbb{A}^2/\Gamma)_{\red}$ required for Theorem~\ref{thm:mainintro}\one\ is described above. Parts \two\ and \three\ are given by the morphisms
 \[
\begin{tikzcd}
n\Gamma\text{-}\Hilb(\mathbb{A}^2)\cong \mathfrak{M}_{\theta}(\mathbf{v},\mathbf{w}) \ar[r,"\ref{thm:mainintro}\three"]  &  
\mathfrak{M}_{\theta_0}(\mathbf{v},\mathbf{w})\ar[r,"\ref{thm:mainintro}\two"]  &  \mathfrak{M}_{0}(\mathbf{v},\mathbf{w})\cong \Sym^n(\mathbb{A}^2/\Gamma)
 \end{tikzcd}
\]
 obtained by variation of GIT quotient.
\end{proof}

\subsection{Quot schemes for other GIT parameters}
Theorem~\ref{thm:mainintro} describes the Hilbert scheme of points on any ADE singularity $\mathbb{A}^2/\Gamma$ in terms of the Nakajima quiver variety $\mathfrak{M}_{\theta_0}(\mathbf{v},\mathbf{w})$ defined by the special stability parameter $\theta_0=(-n,1,0,\dots,0)\in \Theta$. However, the method introduced in the project \cite{CGGS19, CGGS21} works equally well for any stability parameter in the closure of $C_+$. 

 For any non-empty subset $I\subseteq \{0,1,\dots, r\}$, the stability parameter $\theta_I\in \Theta$ whose components equal 1 for each $i\in I$, and 0 otherwise, determines the key line bundle
 \[
 L_I:=L_F(\theta_I) = \bigotimes_{i\in I} \det(\mathcal{R}_{i})
 \]
 on $\mathfrak{M}_\theta(\mathbf{v},\mathbf{w})$ for $\theta\in C_+$. As we saw above in the special case $I=\{0\}$, there is a morphism
 \[
 \varphi_{\vert L_I\vert }\colon \mathfrak{M}_\theta(\mathbf{v},\mathbf{w})\longrightarrow \mathfrak{M}_{\theta_I}(\mathbf{v},\mathbf{w}) =  \Proj \bigoplus_{k\geq 0} H^0\Big(\mathfrak{M}_\theta(\mathbf{v},\mathbf{w}),L_I^{\otimes k}\Big)\subseteq \vert L_I\vert
  \]
 obtained by variation of GIT quotient that satisfies $L_I=\varphi_{\vert L_I\vert}^*\big(\mathcal{O}_{\mathfrak{M}}(1)\big)$. To obtain the analogue of the second morphism $\tau$, define the algebra
 \[
 A_I:=\Big(e_\infty+\textstyle{\sum_{i\in I}} e_{i}\Big) A \Big(e_\infty+\textstyle{\sum_{i\in I}} e_{i}\Big) 
 \]
 by cornering, then strip away the appropriate summands from the endomorphism algebra so that \eqref{eqn:tauthomoA} induces a $\kk$-algebra homomorphism
 \[
   A_I\longrightarrow \End\bigg(\mathcal{O}_{\mathfrak{M}}\oplus  \bigoplus_{i\in I} \mathcal{R}_{i}\bigg).
   \]
  Thus, the bundle $\mathcal{O}_{\mathfrak{M}}\oplus \bigoplus_{i\in I}\mathcal{R}_{i}$ on $\mathfrak{M}_\theta(\mathbf{v},\mathbf{w})$ is a flat family of $\infty$-generated $A_I$-modules of dimension vector $(1,n_I)$ for $n_I:=\sum_{i\in I}n\dim(\rho_i)\rho_i$. A special case of \cite[Proposition~4.2]{CGGS21} establishes that the fine moduli space of $\infty$-generated $A_I$-modules of dimension vector $n_I$ is isomorphic to a particular \emph{orbifold Quot scheme}; as a set, this is   
   \[
   \Quot_I^{n_I}\big([\mathbb{A}^2/\Gamma]\big):= \left\{\End\bigg(\bigoplus_{i\in I}R_{i}\bigg) \text{-epimorphisms }\bigoplus_{i\in I} R_{i}\twoheadrightarrow Z \mathrel{\Big|}  \dim e_{i}Z = n\dim(\rho_i)\text{ for }i\in I\right\}.
   \]
   For $I=\{0\}$, we have $R_0\cong \kk[x,y]^\Gamma$ and $n_I=n$, so we recover the set from \eqref{eqn:HilbnADE}. A proof similar to that described above for Proposition~\ref{prop:universal} establishes the following:
   
 \begin{proposition}
 \label{prop:universalquot}
 The vector bundle $\mathcal{O}_{\mathfrak{M}}\oplus \bigoplus_{i\in I}\mathcal{R}_{i}$ on $\mathfrak{M}_\theta(\mathbf{v},\mathbf{w})$ induces a universal morphism 
 \begin{equation}
     \label{eqn:tauvars2} 
 \tau_I\colon\mathfrak{M}_\theta(\mathbf{v},\mathbf{w})\longrightarrow \Quot_I^{n_I}\big([\mathbb{A}^2/\Gamma]\big)
  \end{equation}
 that satisfies $L_I=\tau_I^*\big(\mathcal{O}_{\Quot}(1)\big)$.
 \end{proposition}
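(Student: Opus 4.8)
The plan is to transcribe the proof of Proposition~\ref{prop:universal} essentially verbatim, with the singleton $\{0\}$ replaced throughout by the subset $I$. The geometric set-up immediately preceding the statement already does most of the work: cornering the tautological homomorphism \eqref{eqn:tauthomoA} by the idempotent $e_\infty+\sum_{i\in I}e_i$, while discarding the summands $\mathcal{R}_k$ for $k\notin I$ from the endomorphism algebra, produces a $\kk$-algebra homomorphism $A_I\to\End\big(\mathcal{O}_{\mathfrak{M}}\oplus\bigoplus_{i\in I}\mathcal{R}_i\big)$. This exhibits $\mathcal{O}_{\mathfrak{M}}\oplus\bigoplus_{i\in I}\mathcal{R}_i$ as a flat family of $A_I$-modules of dimension vector $(1,n_I)$ over $\mathfrak{M}_\theta(\mathbf{v},\mathbf{w})$, each of whose fibres is $\infty$-generated.

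Next I would invoke the fine moduli interpretation of the orbifold Quot scheme. The cited special case of \cite[Proposition~4.2]{CGGS21} identifies $\Quot_I^{n_I}([\mathbb{A}^2/\Gamma])$ with the fine moduli space of $\infty$-generated $A_I$-modules of dimension vector $(1,n_I)$, which therefore carries a tautological family $\mathcal{O}_{\Quot}\oplus\mathcal{T}_I$. Applying the universal property to the flat family above yields a unique morphism $\tau_I$ as in \eqref{eqn:tauvars2} satisfying $\tau_I^*(\mathcal{O}_{\Quot})\cong\mathcal{O}_{\mathfrak{M}}$ and $\tau_I^*(\mathcal{T}_I)\cong\bigoplus_{i\in I}\mathcal{R}_i$.

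It then remains to identify the line bundle. As in the case $I=\{0\}$, the polarising ample line bundle on the Quot scheme is $\mathcal{O}_{\Quot}(1)=\det(\mathcal{T}_I)$. Since pullback commutes with the formation of determinants and respects direct sums, we obtain
\[
\tau_I^*\big(\mathcal{O}_{\Quot}(1)\big)=\tau_I^*\big(\det(\mathcal{T}_I)\big)\cong\det\big(\tau_I^*\mathcal{T}_I\big)\cong\det\Big(\bigoplus_{i\in I}\mathcal{R}_i\Big)\cong\bigotimes_{i\in I}\det(\mathcal{R}_i)=L_I,
\]
which is precisely the asserted relation $L_I=\tau_I^*\big(\mathcal{O}_{\Quot}(1)\big)$.

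Within the scope of this proposition the argument is routine, precisely because its two substantial inputs are imported from elsewhere: the identification of $\Quot_I^{n_I}([\mathbb{A}^2/\Gamma])$ as a fine moduli space of $A_I$-modules (the cited result of \cite{CGGS21}), and the $\infty$-generation of each fibre of the cornered family, both of which I would take as given. The one point that warrants a little care, and which I expect to be the main thing to verify, is matching the polarisation: one must check that, under the fine moduli identification, the tautological $\mathcal{O}_{\Quot}(1)$ really coincides with $\det(\mathcal{T}_I)$, so that the computation above lands on the line bundle $L_I=\bigotimes_{i\in I}\det(\mathcal{R}_i)$ fixed before the statement rather than on some twist of it. I anticipate this bookkeeping, rather than any genuine geometric or homological difficulty, to be the crux.
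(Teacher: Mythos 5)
Your proposal is correct and matches the paper's intent exactly: the paper itself only says ``a proof similar to that described above for Proposition~\ref{prop:universal} establishes the following,'' relying on the cornering homomorphism $A_I\to\End\big(\mathcal{O}_{\mathfrak{M}}\oplus\bigoplus_{i\in I}\mathcal{R}_i\big)$, the fine moduli identification of $\Quot_I^{n_I}\big([\mathbb{A}^2/\Gamma]\big)$ from \cite[Proposition~4.2]{CGGS21}, and the compatibility of pullback with determinants, just as you do. Your closing remark about matching the polarisation is the right point to flag, and it is resolved exactly as in the $I=\{0\}$ case: the GIT construction via the character $g\mapsto\det(g)$ makes $\det(\mathcal{T}_I)$ the polarising ample bundle, so the computation lands on $L_I=\bigotimes_{i\in I}\det(\mathcal{R}_i)$ with no twist.
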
 
   
The approach described above for the case $I=\{0\}$ generalises to give a commutative diagram
\[
\begin{tikzcd}
& \mathfrak{M}_\theta(\mathbf{v},\mathbf{w})\ar[dr,"\tau_I"] \ar[dl,swap,"\varphi_{\vert L_I\vert}"] & 
  \\
  \mathfrak{M}_{\theta_I}(\mathbf{v},\mathbf{w}) \ar[rr,"\iota_I"]  & &  \Quot^{n_I}_I\big([\mathbb{A}^2/\Gamma]\big)
 \end{tikzcd}
\]
where $\iota_I$ is a closed immersion. The harder part is to show that $\iota_I$ is surjective on closed points; at least, it's hard when $0\not\in I$. Again, $\varphi_{\vert L_I\vert}$ is surjective for our special choice of dimension vector $n_I$, and the same algebraic approach via a recollement on $A\text{-mod}$ gives the required surjectivity statement. 
For details, one can appeal to \cite[Proposition~4.2]{CGGS21} and invoke the case-by-case analysis of \cite[Appendix~A]{CGGS19}; alternatively,  \cite{CGGS21} bypasses the case-by-case analysis at the expense of replacing $\mathbf{v}$ by an alternative dimension vector as in \cite[Theorem~6.9]{CGGS21}. 

The results of this section can be summarised in the following statement generalising Theorem~\ref{thm:mainintro}.

 \begin{theorem}[\textbf{Orbifold Quot schemes as quiver varieties}]
 \label{thm:mainrevisited}
  Let $\Gamma\subset \SL(2,\kk)$ be a finite subgroup and let $n\geq 1$. For any non-empty subset $I\subseteq \{0,1,\dots,r\}$ and for $n_I:=\sum_{i\in I}n\dim(\rho_i)\rho_i$, the orbifold Quot scheme $\Quot_I^{n_I}\big([\mathbb{A}^2/\Gamma]\big)_{\red}:$
  \begin{enumerate}
      \item[\one] is a Nakajima quiver variety for some 
      GIT stability parameter in $\overline{C_+}$;
      \item[\two] is a partial crepant resolution of the affine variety $\Sym^n(\mathbb{A}^2/\Gamma)$; and
      \item[\three] admits a projective crepant resolution obtained by variation of GIT quotient.
  \end{enumerate}
\end{theorem}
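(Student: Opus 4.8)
The plan is to reduce all three assertions to the single isomorphism $\mathfrak{M}_{\theta_I}(\mathbf{v},\mathbf{w})\cong \Quot_I^{n_I}\big([\mathbb{A}^2/\Gamma]\big)_{\red}$ demanded by part~\one, after which parts~\two\ and~\three\ follow formally by transferring known properties of the quiver variety across this isomorphism. Since $\theta_I(\rho_k)=1$ for $k\in I$ and $0$ otherwise, we have $\theta_I\in\overline{C_+}\subseteq F$, so the variation of GIT morphism $f_{\theta_I}\colon \mathfrak{M}_{\theta_I}(\mathbf{v},\mathbf{w})\to\mathfrak{M}_0(\mathbf{v},\mathbf{w})\cong\Sym^n(\mathbb{A}^2/\Gamma)$ is projective by Remarks~\ref{rems:Nakrems}(3), and Theorem~\ref{thm:BC20main} guarantees that it is a partial crepant resolution; this yields~\two. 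Likewise, for any $\theta\in C_+$ the variety $\mathfrak{M}_\theta(\mathbf{v},\mathbf{w})\cong n\Gamma\text{-}\Hilb(\mathbb{A}^2)$ is a smooth symplectic resolution of $\Sym^n(\mathbb{A}^2/\Gamma)$, so the VGIT morphism $\varphi_{\vert L_I\vert}\colon \mathfrak{M}_\theta(\mathbf{v},\mathbf{w})\to\mathfrak{M}_{\theta_I}(\mathbf{v},\mathbf{w})$ is a projective crepant resolution, giving~\three. Thus the entire content of the theorem is concentrated in part~\one.

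For part~\one, I would run the two-step argument of Section~\ref{sec:sketch} verbatim with $\{0\}$ replaced throughout by the subset $I$. Proposition~\ref{prop:universalquot} already supplies the universal morphism $\tau_I$ with $L_I=\tau_I^*\big(\mathcal{O}_{\Quot}(1)\big)$, while $\varphi_{\vert L_I\vert}$ satisfies $L_I=\varphi_{\vert L_I\vert}^*\big(\mathcal{O}_{\mathfrak{M}}(1)\big)$ by construction. Because the same line bundle $L_I$ is pulled back from the polarising ample bundle along both morphisms, their images coincide; surjectivity of $\varphi_{\vert L_I\vert}$, which holds for our special dimension vector $n_I$, then produces the closed immersion $\iota_I\colon \mathfrak{M}_{\theta_I}(\mathbf{v},\mathbf{w})\hookrightarrow \Quot_I^{n_I}\big([\mathbb{A}^2/\Gamma]\big)$ filling in the commutative triangle. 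It therefore remains to prove that $\iota_I$ is surjective on closed points, whence $\iota_I$ identifies its source with $\Quot_I^{n_I}\big([\mathbb{A}^2/\Gamma]\big)_{\red}$.

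The hard part will be this surjectivity, and I expect it to be genuinely more delicate than the case $I=\{0\}$. Given a closed point $[N]$, the module $N$ is $\infty$-generated over the cornered algebra $A_I=\big(e_\infty+\sum_{i\in I}e_i\big)A\big(e_\infty+\sum_{i\in I}e_i\big)$ of dimension vector $(1,n_I)$, and I would feed it into the functor $j_!\colon A_I\text{-mod}\to A\text{-mod}$, $N\mapsto A\big(e_\infty+\sum_{i\in I}e_i\big)\otimes_{A_I}N$, arising from the recollement on $A\text{-mod}$. Exactly as before, $j_!(N)$ is a finite-dimensional $\theta_I$-semistable $A$-module with the prescribed dimensions at $\infty$ and at the vertices in $I$. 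The crux is to control $\dim_k j_!(N)$ at the vertices $k\notin I$: one wants the inequalities $\dim_k j_!(N)\leq n\dim(\rho_k)$ so that $j_!(N)$ may be padded with copies of the vertex simples $\kk e_k$ to reach dimension vector $(1,n\delta)$ without destroying $\theta_I$-semistability, thereby producing an $A$-module $M$ with $\iota_I([M])=[N]$. As in \cite[Section~A.1]{CGGS19}, I anticipate that the naive bound can fail, so one must argue instead with the unique $\theta_I$-stable summand of the $\theta_I$-polystable representative of the S-equivalence class of $j_!(N)$; this case-by-case control of dimension vectors, most awkward precisely when $0\notin I$, is where the real labour lies. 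Once it is in hand, $\iota_I$ is an isomorphism onto $\Quot_I^{n_I}\big([\mathbb{A}^2/\Gamma]\big)_{\red}$, completing~\one\ and hence the theorem.
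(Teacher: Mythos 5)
Your proposal follows the paper's own route essentially verbatim: parts \two\ and \three\ are read off from the VGIT morphisms and Theorem~\ref{thm:BC20main}, while part \one\ is obtained by running the two-step argument of Section~\ref{sec:sketch} with $\{0\}$ replaced by $I$ --- the matching pullbacks of $L_I$ along $\varphi_{\vert L_I\vert}$ and $\tau_I$ give the closed immersion $\iota_I$, and surjectivity on closed points comes from the recollement functor $j_!$ together with the dimension-vector control via the $\theta_I$-polystable representative. You also correctly locate the genuine difficulty in the case $0\notin I$, which is exactly where the paper defers to the case-by-case analysis of \cite[Appendix~A]{CGGS19} (or the alternative dimension vector of \cite{CGGS21}).
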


 \subsection{The elephant in the room}
 It would be inappropriate to wrap up without at least addressing the question of reducedness.
 
 \begin{question}
 Is the orbifold Quot scheme $\Quot_I^{n_I}\big([\mathbb{A}^2/\Gamma]\big)$ reduced? How about $\Hilb^{[n]}(\mathbb{A}^2/\Gamma)$?
 \end{question}

 Unfortunately, our methods don't shed light on this question because we build the map that provides the set-theoretic inverse to $\iota_I$ on closed points by applying the functor $j_{!}$ one module at a time. A more global construction, in which $j_{!}$ is applied uniformly to a flat family of $A_I$-modules, would be required to construct an inverse morphism in the algebraic category.

\end{document}